\newcommand{\mc}{\mathcal}
    \renewcommand\gg{\mathfrak g}
\newcommand\hh{\mathfrak h}
\newcommand\frakl{\mathfrak l}
\newcommand\mm{\mathfrak m}
\newcommand\pp{\mathfrak p}
\newcommand\inverse{{^{-1}}}
\newcommand\ra{\rightarrow}
\DeclareMathOperator{\Ad}{Ad}
\DeclareMathOperator{\Aut}{Aut}
\DeclareMathOperator{\GL}{GL}
\DeclareMathOperator{\Mat}{Mat}
\DeclareMathOperator{\Gal}{Gal}
\DeclareMathOperator{\SL}{SL}
\DeclareMathOperator{\Lie}{Lie}
\DeclareMathOperator{\End}{End}
\newcommand{\twobytwo}[4]
 {{\left(\begin{array}{ll} #1 & #2 \\ #3 & #4 \end{array}\right)}}
\newcommand{\tuple}[1]{{\mathbf {#1}}}
\numberwithin{equation}{section}
\newtheorem{thm}[equation]{Theorem}
\newtheorem{lem}[equation]{Lemma}
\newtheorem{cor}[equation]{Corollary}
\newtheorem{prop}[equation]{Proposition}
\newtheorem{conj}[equation]{Conjecture}
\theoremstyle{definition}
\newtheorem{defn}[equation]{Definition}
\newtheorem{exmp}[equation]{Example}
\theoremstyle{remark}
\newtheorem{rem}[equation]{Remark}
\theoremstyle{remark}
\newtheorem{rems}[equation]{Remarks}
\newtheorem{question}[equation]{Question}
\newcommand{\ovl}[1]{{\overline{#1}}}
\subjclass[2010]{20G15, 14L24,
%Secondary
20E42.}
\keywords{reductive group, $G$-variety, closed orbit, uniform instability, optimal cocharacter, Centre Conjecture}
\title[Closed Orbits and uniform $S$-instability]
{Closed Orbits and uniform $S$-instability \\ in Geometric Invariant Theory}
\author[M.\  Bate]{Michael Bate}
\address%[M.\  Bate]
{Department of Mathematics,
University of York,
York YO10 5DD,
United Kingdom}
\email{michael.bate@york.ac.uk}
\author[B.\ Martin]{Benjamin Martin}
\address%[B.\ Martin]
{Mathematics and Statistics Department,
University of Canterbury,
Private Bag 4800,
Christchurch 1,
New Zealand}
\email{B.Martin@math.canterbury.ac.nz}
\author[G. R\"ohrle]{Gerhard R\"ohrle}
\address%[G.~R\"{o}hrle]
{Fakult\"at f\"ur Mathematik,
Ruhr-Universit\"at Bochum,
D-44780 Bochum, Germany}
\email{gerhard.roehrle@rub.de}
\author[R.\ Tange]{Rudolf Tange}
\address%[R.~Tange]
{School of Mathematics,
Trinity College Dublin,
College Green,
Dublin 2,
Ireland}
\email{tanger@tcd.ie}
\begin{document}

\begin{abstract}
In this paper we consider various problems involving
the action of a reductive group $G$ on an affine variety $V$.
We prove some general rationality
results about the $G$-orbits in $V$.
In addition, we extend fundamental results of Kempf and Hesselink regarding
optimal destabilizing parabolic subgroups of $G$ for such
general $G$-actions.

We apply our general rationality results
to answer a question of Serre
concerning the behaviour of his notion of $G$-complete reducibility
under separable field extensions.
Applications of our new optimality results also
include a construction which allows
us to associate an optimal destabilizing parabolic subgroup of $G$
to any subgroup of $G$.
Finally, we use these new optimality techniques to provide an
answer to Tits' Centre Conjecture in a special case.
\end{abstract}

\maketitle

\tableofcontents

\section{Introduction}
\label{sec:intro}

Let $G$ be a reductive linear algebraic group over an algebraically closed field $k$ and suppose $G$ acts on an affine variety $V$ over $k$.  A fundamental problem in geometric invariant theory is to understand the structure of the $G$-orbits $G\cdot v$ and their closures $\ovl{G\cdot v}$ for $v\in V$.  It is well known that $\ovl{G\cdot v}$ is a union of $G$-orbits, exactly one of which is closed.  Moreover, the Hilbert-Mumford Theorem \cite[Thm.~1.4]{kempf} tells us that if $G\cdot v$ is not closed, then there exists a cocharacter $\lambda$ of $G$ such that
$\lim_{a\ra 0} \lambda(a)\cdot v$ exists and lies outside ${G\cdot v}$.  One can associate to $\lambda$ a parabolic subgroup $P_\lambda$ of $G$; we call $\lambda$ a \emph{destabilizing cocharacter} for $v$ and we call $P_\lambda$ a \emph{destabilizing parabolic subgroup} for $v$.

A strengthened version of the Hilbert-Mumford Theorem --- due to Kempf \cite{kempf} and Rousseau \cite{rousseau} --- says that there exists a so-called \emph{optimal cocharacter} $\lambda_v$ such that $\lim_{a\ra 0} \lambda_v(a)\cdot v$ exists and lies outside $G\cdot v$, and such that $\lambda_v$ takes $v$ outside $G\cdot v$ ``as fast as possible''.  This optimality notion has had several applications, including to $G$-complete reducibility \cite{BMR}, \cite{BMR2}, \cite{BMRT} and the theory of associated cocharacters for nilpotent elements of $\Lie G$ \cite{jantzen}, \cite{premet}; see \cite[p64 and App.~2B]{mumford} for further discussion. Hesselink used optimality to study the nullcone of a rational $G$-module \cite{He2}.

In this paper we investigate the structure of the orbits when the field $k$ is not algebraically closed.  Little seems to be known here.  Indeed, one of Kempf's motivations for his optimality construction was to prove a rationality result for destabilizing cocharacters over a perfect field \cite[Thm.~4.2]{kempf}.  In Sections \ref{sec:orbrat} and \ref{sec:uniform} we prove some results in the general setting of geometric invariant theory.  In Section \ref{sec:appl-gcr} we give applications to the theory of $G$-complete reducibility and Tits' Centre Conjecture.  Below we describe the contents of the paper in more detail.

It is convenient to extend the concept of orbit closure to the non-algebraically closed case.  We say that the
$G(k)$-orbit $G(k)\cdot v$ is \emph{cocharacter-closed over $k$}
if for any $k$-defined cocharacter $\lambda $ of $G$
such that $v':= \lim_{a\ra 0} \lambda(a)\cdot v$ exists,
$v'$ is $G(k)$-conjugate to $v$ (see Definition \ref{def:cocharclosure}).
Clearly, this notion depends only on the $G(k)$-orbit
$G(k)\cdot v$ of $v$ and not on $v$ itself.  Let $\ovl{k}$ denote the algebraic closure of $k$.  It follows from the Hilbert-Mumford Theorem that
$G\cdot v$ is closed if and only if $G(\ovl{k})\cdot v$
is cocharacter-closed over $\ovl{k}$.
It is sensible, therefore, to consider the $G(k)$-orbits
that are cocharacter-closed over $k$ as a  generalization
to non-algebraically closed fields $k$ of the closed $G$-orbits.
Some of the ideas in Section 3 of this paper were studied by J.~Levy 
in the special case of characteristic 0, cf.~\cite{levy}; 
we thank Levy for drawing our attention to \cite{levy}. 

Understanding the structure of the orbits is a delicate problem because the interplay between the $G$-orbits and the $G(k)$-orbits is quite complicated.  Let $v\in V(k)$.  Suppose first that $G\cdot v$ is not closed and let $S$ be the unique closed $G$-orbit contained in $\ovl{G\cdot v}$.  It can happen that $G(k)\cdot v$ is cocharacter-closed over $k$, so there need not exist a $k$-defined cocharacter $\lambda$ such that $\lim_{a\ra 0} \lambda(a)\cdot v$ exists and belongs to $S$; indeed, $S$ need not have any $k$-points at all.  Now suppose that $G\cdot v$ is closed.  If $\lambda$ is a $k$-defined cocharacter, then it can happen that $\lim_{a\ra 0} \lambda(a)\cdot v$ exists but lies outside $G(k)\cdot v$: in this case, $G(k)\cdot v$ is not cocharacter-closed over $k$.  We give concrete examples of these phenomena in Remark \ref{rem:obstacle} (see also Question \ref{qn:cocharclsdext}).

Our work on geometric invariant theory has two main strands.  Let $v\in V(k)$ and let $\lambda$ be a $k$-defined cocharacter of $G$ such that $v':= \lim_{a\ra 0} \lambda(a)\cdot v$ exists.  First we consider the case when $v'$ lies in $G(k)\cdot v$.  Our main results here are Theorems \ref{thm:Ruconj} and \ref{thm:cocharclosedcrit}, which show that under some additional hypotheses, $v'$ lies in $R_u(P_\lambda)(k)\cdot v$.  Theorem \ref{thm:Ruconj} was first proved by H.~Kraft and J.~Kuttler for $k$ algebraically closed of characteristic zero in case $V = G/H$ is an affine homogeneous space, cf.\
\cite[Prop.\ 2.1.4]{schmitt} or \cite[Prop.\ 2.1.2]{Gomez}.

Second, we consider the case when $v'$ lies outside $G\cdot v$.  We extend work of Kempf and Hesselink on optimality.
In \cite{kempf}, Kempf shows that if
$v \in V$ is a point whose $G$-orbit $G\cdot v$ is not closed,
and $S$ is a $G$-stable closed subvariety of $V$ which meets the
closure of $G\cdot v$, then there is an optimal class of cocharacters
which move $v$ into $S$ (by taking limits).
In a similar vein, in \cite{He} Hesselink develops a notion of \emph{uniform
instability}: here the single point $v \in V$ in
Kempf's construction is replaced by a subset $X$ of $V$, but the
$G$-stable subvariety $S$ is taken to be a single point of $V$.  Moreover, Hesselink's results work for arbitrary non-algebraically closed fields.
Our constructions, culminating in Theorem \ref{thm:kempfrousseau}, combine
these two ideas within the single framework of
\emph{uniform $S$-instability}, providing a useful extension
of these optimality methods in geometric invariant theory.

There is an important open problem which we do {\bf not} address.  We do not deal with the intermediate case when $v'$ lies inside $G\cdot v$ but outside $G(k)\cdot v$: in particular, our optimality results do not give a true generalization of the Hilbert-Mumford-Kempf-Rousseau optimality theorem to arbitrary $k$.  To do this, one would have to answer the following question.  Suppose $v\in V(k)$ and there exists a $k$-defined cocharacter $\lambda$ such that $\lim_{a\ra 0} \lambda(a)\cdot v$ exists and lies outside $G(k)\cdot v$.  Does there exist an optimal $k$-defined cocharacter which takes $v$ outside $G(k)\cdot v$ as fast as possible?  The cocharacter $\lambda_v$ described above will not suffice: for instance, if $G\cdot v$ is closed, then $\lambda_v$ is not even defined.  We plan to return to this question in future work.

The hypothesis that the point $v\in V$ is a $k$-point turns out to be unnecessarily strong, and we can often get away with a weaker condition on the stabilizer $C_G(v)$ (see the beginning of Section \ref{sec:orbrat}).  This is convenient in applications to $G$-complete reducibility (see Remark~\ref{rem:generictuple}).

As well as being of interest in their own right, our general results on $G$-orbits and
rationality have applications to the theory of $G$-complete
reducibility, introduced by Serre \cite{serre1.5} and developed in
\cite{BMR}, \cite{BMR2}, \cite{BMRT}, \cite{BMRT:relative}, \cite{liebeckseitz0}, \cite{liebeckseitz}, \cite{liebecktesterman}, \cite{seitz1}, \cite{serre0}, \cite{serre1}, \cite{serre2}.
In particular, we are able to use them to answer a question of Serre about how $G$-complete
reducibility behaves under extensions of fields
(Theorem \ref{thm:serresquestion}).
Our notion of a cocharacter-closed orbit allows us to give a geometric characterization of $G$-complete reducibility over a field $k$ (Theorem~\ref{thm:cocharclosedcritforGcr}), thereby extending \cite[Cor.~3.7]{BMR}.
We use our optimality results to attach to any subgroup $H$ of
$G$ an optimal parabolic subgroup of $G$ containing $H$,
which is proper if and only if $H$ is not $G$-completely reducible
%(Theorem~\ref{thm:optpar}).
(see Theorem~\ref{thm:optpar} and Definition \ref{defn:optpar}).
This optimal parabolic subgroup provides a very
useful tool in the study of subgroups of reductive groups.
As an illustration of its effectiveness, we give short proofs of some existing
results, and prove a special case of Tits' Centre Conjecture
(Theorem~\ref{thm:X^N}).  An important tool, which we introduce in Definition~\ref{def:generictuple}, is the notion of a generic tuple of a subgroup $H$ of $G$.  Replacing generating tuples with generic tuples allows us to avoid many technical problems that arose in our earlier work (see Remark~\ref{rem:generictuple}).

We also refer the reader to \cite{BMRT:relative}, where we discuss further consequences of the results of the present paper.

\section{Notation and preliminaries}
\label{sec:prelims}

\subsection{Basic notation}

Let $k$ be a field, let $k_s$ denote its separable closure, and let $\ovl k$
denote its algebraic closure. Note that $k_s=\ovl{k}$ if $k$ is perfect.
We denote the Galois group $\Gal(k_s/k)=\Gal(\ovl k/k)$ by $\Gamma$.
We use the notion of a $k$-scheme from \cite[AG.11]{Bo}: a $k$-scheme is a
$\ovl k$-scheme together with a $k$-structure.
So $k$-schemes are assumed to be of finite type and reduced
separated $k$-schemes are called $k$-varieties.
Furthermore, a subscheme of a scheme $V$ over $k$ or over
$\ovl{k}$ is always a subscheme of $V$
as a scheme over $\ovl{k}$ and points of $V$ are always
closed points of $V$ as a
scheme over $\ovl{k}$.  By ``variety'' we mean ``variety over $\ovl{k}$''.
Non-reduced schemes are only used in
Section~\ref{sec:uniform} and there they
only play a technical r\^ole; we always formulate our results
for $k$-varieties.  If $S$ is a subset of a variety,
then $\ovl{S}$ denotes the closure of $S$.

Now let $V$ be a $k$-variety.  If $k_1/k$ is an algebraic extension,
then we write $V(k_1)$ for the set of $k_1$-points of $V$.
By a \emph{separable point} we mean a $k_s$-point.
If $W$ is a subvariety of $V$, then we set $W(k_1)= W\cap V(k_1)$.
Here we do not assume that $W$ is $k$-defined, so $W(k_1)$
can be empty even when $k_1= k_s$.  The Galois group $\Gamma$
acts on $V$; see, e.g., \cite[11.2]{spr2}.
Recall the Galois criterion for a closed subvariety $W$ of $V$
to be $k$-defined: $W$ is $k$-defined if and only if
it contains a
$\Gamma$-stable set of separable points of $V$ which is dense in $W$
(see \cite[Thm.~AG.14.4]{Bo}).

We denote by ${\rm Mat}_m$ or ${\rm Mat_m}(k)$ the algebra of $m\times m$ matrices over $k$.  The general linear group ${\rm GL}_m$ acts on ${\rm Mat}_m$ by conjugation.

Let $H$ be a $k$-defined linear algebraic group.  By a subgroup of $H$ we mean a closed subgroup.
We let $Z(H)$ denote the centre of $H$ and $H^0$ the connected component of
$H$ that contains $1$.  Recall that $H$ has a $k$-defined maximal torus \cite[18.2(i)~Thm.]{Bo}.
For $K$ a subgroup of $H$, we denote the
centralizer of $K$ in $H$ by $C_H(K)$ and the normalizer of $K$ in $H$
by $N_H(K)$.
We denote the group of algebraic automorphisms of $H$ by $\Aut H$.

For the set of cocharacters (one-parameter subgroups) of $H$ we write $Y(H)$;
the elements of $Y(H)$ are the homomorphisms from the multiplicative group
$\ovl{k}^*$ to $H$. We denote the set of $k$-defined
cocharacters by $Y_k(H)$.
There is a left action of $H$ on $Y(H)$ given by
$(h\cdot \lambda)(a) = h\lambda(a)h^{-1}$ for
$\lambda\in Y(H)$, $h\in H$ and $a \in \ovl{k}^*$.  The subset $Y_k(H)$ is stabilized by $H(k)$.

The \emph{unipotent radical} of $H$ is denoted $R_u(H)$; it is the maximal
connected normal unipotent subgroup of $H$.
The algebraic group $H$ is called \emph{reductive} if $R_u(H) = \{1\}$;
note that we do not insist that a reductive group is connected.

Let $A$ be an algebraic group, a Lie algebra or an associative algebra.  If $n\in {\mathbb N}$ and $\tuple{x}= (x_1,\ldots, x_n)\in A^n$, then we say that $\tuple{x}$ generates $A$ if the $x_i$ generate $A$ as an algebraic group (resp.\ Lie algebra, resp.\ associative algebra).  By this we mean in the algebraic group case that the algebraic subgroup of $A$ generated by the $x_i$ is the whole of $A$, and we say that the algebraic group $A$ is topologically finitely generated.

Throughout the paper, $G$ denotes a $k$-defined reductive algebraic group, possibly
disconnected.  We say an affine $G$-variety $V$ is $k$-defined if both $V$ and the action of $G$ on $V$ are $k$-defined.
By a rational $G$-module, we mean a finite-dimensional vector space over $\ovl{k}$ with a linear $G$-action.  If both $V$ and the action are $k$-defined, then we say the rational $G$-module is $k$-defined.

Suppose $T$ is a maximal torus of $G$.
Let $\Psi = \Psi(G,T)$ be the set of roots of $G$ relative to $T$.
Let $\alpha\in \Psi$. Then $U_\alpha$ denotes
the root subgroup of $G$ associated to $\alpha$.

\subsection{Non-connected reductive groups}
\label{subsec:noncon}

The crucial idea which allows us to deal with non-connected groups is the introduction of so-called
\emph{Richardson parabolic subgroups} (\emph{R-parabolic subgroups}) of a
reductive group $G$.
We briefly recall the main definitions and results;
for more details and further results,
the reader is referred to \cite[Sec.\ 6]{BMR}.

\begin{defn}
\label{defn:rpars}
For each cocharacter $\lambda \in Y(G)$, let
$P_\lambda = \{ g\in G \mid \underset{a \to 0}{\lim}\,
\lambda(a) g \lambda(a)\inverse \textrm{ exists} \}$ (see Section~\ref{subsec:Gvars} for the definition of limit).
Recall that a subgroup $P$ of $G$ is \emph{parabolic}
if $G/P$ is a complete variety.
The subgroup $P_\lambda$ is parabolic in this sense, but the converse
is not true: e.g.,\ if $G$ is finite,
then every subgroup is parabolic, but the only
subgroup of $G$ of the form $P_\lambda$ is $G$ itself.
If we define
$L_\lambda = \{g \in G \mid \underset{a \to 0}{\lim}\,
\lambda(a) g \lambda(a)\inverse = g\}$,
then $P_\lambda = L_\lambda \ltimes R_u(P_\lambda)$,
and we also have
$R_u(P_\lambda) = \{g \in G \mid \underset{a \to 0}{\lim}\,
\lambda(a) g \lambda(a)\inverse = 1\}$.
The map $c_\lambda :P_\lambda \to L_\lambda$ given by
$c_\lambda(g) = \underset{a\to 0}{\lim}\, \lambda(a)g\lambda(a)\inverse$ is
a surjective homomorphism of algebraic groups with kernel $R_u(P_\lambda)$;
it coincides with the usual projection $P_\lambda\ra L_\lambda$.  We abuse notation and denote the corresponding map from $P_\lambda^n$ to $L_\lambda^n$ by $c_\lambda$ as well, for any $n\in {\mathbb N}$.
The subgroups $P_\lambda$ for $\lambda \in Y(G)$
are called the \emph{Richardson parabolic} (or \emph{R-parabolic}) \emph{subgroups}
of $G$.
Given an R-parabolic subgroup $P$,
a \emph{Richardson Levi} (or \emph{R-Levi})
\emph{subgroup} of $P$ is any subgroup $L_\lambda$
such that $\lambda \in Y(G)$ and $P=P_\lambda$.
\end{defn}

If $G$ is connected, then the R-parabolic subgroups
(resp.\ R-Levi subgroups of R-parabolic subgroups)
of $G$ are exactly the parabolic subgroups
(resp.\ Levi subgroups of parabolic subgroups) of $G$;
indeed, most of the theory of parabolic subgroups and
Levi subgroups of connected reductive groups
carries over to R-parabolic and R-Levi subgroups of
arbitrary reductive groups.
In particular, all R-Levi subgroups of an R-parabolic
subgroup $P$ are conjugate under the action
of $R_u(P)$.  If $P,Q$ are R-parabolic subgroups of $G$ and $P^0= Q^0$,
then $R_u(P)= R_u(Q)$.

\begin{lem}
\label{lem:Levidown}
Let $P,Q$ be R-parabolic subgroups of $G$
with $P\subseteq Q$ and $P^0= Q^0$, and
let $M$ be an R-Levi subgroup of $Q$.
Then $P\cap M$ is an R-Levi subgroup of $P$.
\end{lem}

\begin{proof}
Fix a maximal torus $T$ of $G$ such that $T\subseteq M$.
Then $T \subseteq P$, since $P^0= Q^0$.
There exists a unique R-Levi subgroup $L$ of $P$ such that
$T\subseteq L$, \cite[Cor.~6.5]{BMR}.  There exists a unique
R-Levi subgroup $M'$ of $Q$ such that $L\subseteq M'$, \cite[Cor.~6.6]{BMR}.
Since $M$ is the unique R-Levi subgroup of $Q$ that contains $T$,
\cite[Cor.~6.5]{BMR}, we must have $M= M'$.  Hence $L\subseteq P\cap M$.
If this inclusion is proper, then $P\cap M$ meets $R_u(P)= R_u(Q)$
non-trivially, a contradiction.  We deduce that $L= P\cap M$.
\end{proof}

We now consider some rationality issues.  The proof of the next
lemma follows immediately from the definitions of limit and of
the actions of $\Gamma$ on $k_s$-points and on $k_s$-defined morphisms.

\begin{lem}
\label{lem:Galoislim}
 Let $\lambda\in Y_{k_s}(G)$ and let $\gamma\in \Gamma$.
Then $P_{\gamma\cdot \lambda}= \gamma\cdot P_\lambda$ and $L_{\gamma\cdot \lambda}= \gamma\cdot L_\lambda$.
\end{lem}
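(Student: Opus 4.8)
The plan is to deduce both equalities from a single fact: the formation of limits $\lim_{a\to 0}\lambda(a)g\lambda(a)\inverse$ is compatible with the $\Gamma$-action on $G$. First I would record the twisting formula. For $\gamma\in\Gamma$, $\lambda\in Y_{k_s}(G)$ and a point $g\in G(\ovl k)$, write $f_{\lambda,g}$ for the $k_s$-defined morphism from the multiplicative group $\G_m$ to $G$ sending $a\mapsto\lambda(a)g\lambda(a)\inverse$ (recall $\G_m(\ovl k)=\ovl k^{*}$, on which $\Gamma$ acts through its action on $\ovl k$). Using the defining property $(\gamma\cdot\lambda)(a)=\gamma\cdot\bigl(\lambda(\gamma\inverse\cdot a)\bigr)$ of the twisted cocharacter together with the fact that $\Gamma$ acts on $G$ by group automorphisms, a direct check on $\ovl k$-points gives
\[
\gamma\cdot f_{\lambda,g}=f_{\gamma\cdot\lambda,\,\gamma\cdot g}.
\]

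Next I would use that $\gamma$ induces an automorphism of $G$ as a scheme and an automorphism of $\Aff^1$ fixing the origin. Consequently $f_{\lambda,g}$ extends to a morphism $\Aff^1\to G$ if and only if $\gamma\cdot f_{\lambda,g}=f_{\gamma\cdot\lambda,\,\gamma\cdot g}$ does, and when the limits exist we have $\lim_{a\to 0}f_{\gamma\cdot\lambda,\gamma\cdot g}(a)=\gamma\cdot\bigl(\lim_{a\to 0}f_{\lambda,g}(a)\bigr)$. Substituting this into the definitions of $P_\lambda$ and $L_\lambda$ from Definition \ref{defn:rpars} yields, for every $g\in G(\ovl k)$,
\[
g\in P_\lambda\iff\gamma\cdot g\in P_{\gamma\cdot\lambda},\qquad g\in L_\lambda\iff\gamma\cdot g\in L_{\gamma\cdot\lambda}.
\]
Since an R-parabolic subgroup $P_\lambda$ (resp.\ an R-Levi subgroup $L_\lambda$) is determined as a closed subvariety of $G$ by its set of $\ovl k$-points, and since $\gamma\cdot P_\lambda$ (resp.\ $\gamma\cdot L_\lambda$) is the closed subvariety with point set $\{\gamma\cdot g\mid g\in P_\lambda\}$ (resp.\ $\{\gamma\cdot g\mid g\in L_\lambda\}$), these equivalences are exactly the asserted identities $P_{\gamma\cdot\lambda}=\gamma\cdot P_\lambda$ and $L_{\gamma\cdot\lambda}=\gamma\cdot L_\lambda$. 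The same argument applies verbatim to $R_u(P_\lambda)$.

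I do not expect a genuine obstacle here; the work is bookkeeping, and the two points to watch are (i) that $\Gamma$ acts nontrivially on the source $\G_m$, which is precisely what forces the cocharacter to be twisted as well in the identity $\gamma\cdot f_{\lambda,g}=f_{\gamma\cdot\lambda,\gamma\cdot g}$, and (ii) that ``the limit exists'' is a scheme-theoretic condition preserved by the merely semilinear automorphism attached to $\gamma$, with the value of the limit transforming correctly because that automorphism fixes $0\in\Aff^1$. If one preferred to phrase everything with $k_s$-points instead, one would additionally note that $P_\lambda$ and $L_\lambda$ are $k_s$-defined (as $\lambda\in Y_{k_s}(G)$) with Zariski-dense sets of $k_s$-points, so that agreement on $k_s$-points forces agreement of the subvarieties; working directly with $\ovl k$-points sidesteps even this.
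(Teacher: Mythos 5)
Your argument is correct and is exactly the route the paper takes: the paper gives no written proof beyond remarking that the lemma ``follows immediately from the definitions of limit and of the actions of $\Gamma$ on $k_s$-points and on $k_s$-defined morphisms,'' and your twisting identity $\gamma\cdot f_{\lambda,g}=f_{\gamma\cdot\lambda,\,\gamma\cdot g}$ together with the observation that the semilinear automorphism of $\Aff^1$ fixes $0$ is precisely the bookkeeping being alluded to. Nothing further is needed.
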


\begin{rem}
\label{rem:krpars}
If $G$ is connected, then
a parabolic subgroup $P$ of $G$ is $k$-defined
if and only if $P=P_\lambda$ for some $\lambda \in Y_k(G)$,
\cite[Lem.~15.1.2(ii)]{spr2}.
However, the analogous result for R-parabolic subgroups of
a non-connected group $G$ is not true in general.
To see this, let $T$ be a
non-split one-dimensional torus over $k$ and let $F$ be the group of order $2$
acting on $T$ by inversion. Then $T$ is a $k$-defined R-parabolic subgroup of
the reductive group $G:=FT$,
but $T$ is not of the form $P_\lambda$ for any $\lambda$ over $k$, because $Y_k(G) = \{0\}$.
Our next set of results allow us to deal with this problem.
\end{rem}

\begin{lem}
\label{lem:Rparrat}
Let $\lambda\in Y(G)$.
\begin{enumerate}[{\rm(i)}]
\item  If $P_\lambda$ is $k$-defined, then so is $R_u(P_\lambda)$.
Moreover, if $\lambda$ belongs to $Y_k(G)$, then $P_\lambda$, $L_\lambda$
and the isomorphism $L_\lambda\ltimes R_u(P_\lambda)\ra P_\lambda$
are $k$-defined.
\item Suppose $P_\lambda$ is $k$-defined.
Then there
exists $\mu\in Y_k(G)$ such that $P_\lambda\subseteq P_\mu$ and
$P_\lambda^0=P_\mu^0$.
\item Let $P$ be a $k$-defined $R$-parabolic subgroup.
Then any $k$-defined
maximal torus of $P$ is contained in a unique $k$-defined
R-Levi subgroup of $P$ and any two
$k$-defined R-Levi subgroups of $P$ are conjugate by a
unique element of $R_u(P)(k)$.
\end{enumerate}
\end{lem}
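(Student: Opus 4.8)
The plan is to prove the three parts in order, reducing each to known facts about $k$-defined parabolic and Levi subgroups of connected groups by passing to $G^0$ and using the structure $P_\lambda = P_\lambda^0 \rtimes (\text{finite part})$.

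For part (i), first suppose only that $P_\lambda$ is $k$-defined. Since $P_\lambda^0$ is the identity component, it is also $k$-defined, and $R_u(P_\lambda) = R_u(P_\lambda^0)$ is the unipotent radical of a $k$-defined connected group, hence $k$-defined by a standard result (e.g.\ the unipotent radical of a $k$-defined group is $k$-defined over a perfect field, but in general one uses that $R_u(P_\lambda^0)$ equals $R_u(P_\mu)$ for a suitable $k$-defined parabolic $P_\mu \supseteq P_\lambda^0$ of $G^0$ with the same identity component; alternatively cite \cite[Lem.~15.1.2]{spr2} applied inside $G^0$). Now assume $\lambda \in Y_k(G)$. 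Then the morphisms $g \mapsto \lim_{a\to 0}\lambda(a)g\lambda(a)^{-1}$ and the various limit maps are $k$-defined morphisms of varieties, since $\lambda$ and the conjugation action are $k$-defined; hence $P_\lambda$, $L_\lambda$ and $R_u(P_\lambda)$, being defined by the existence/value of these limits, are $\Gamma$-stable and contain dense sets of separable points, so are $k$-defined by the Galois criterion, and the same reasoning shows $c_\lambda$ and its section are $k$-defined, giving the $k$-defined isomorphism $L_\lambda \ltimes R_u(P_\lambda) \to P_\lambda$.

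For part (ii), apply the connected case: $P_\lambda^0$ is a $k$-defined parabolic subgroup of the connected $k$-defined group $G^0$, so by \cite[Lem.~15.1.2(ii)]{spr2} there is $\mu \in Y_k(G^0) \subseteq Y_k(G)$ with $P_\mu^0 = (P_\mu \cap G^0) = P_\lambda^0$. It remains to check $P_\lambda \subseteq P_\mu$; since $P_\lambda$ normalizes $P_\lambda^0 = P_\mu^0$, and $P_\mu$ is precisely the full normalizer of $P_\mu^0$ in $G$ (an R-parabolic subgroup is self-normalizing and determined by its identity component, as recalled before Lemma \ref{lem:Levidown}), we get $P_\lambda \subseteq N_G(P_\mu^0) = P_\mu$.

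For part (iii), let $P$ be a $k$-defined R-parabolic subgroup and $T$ a $k$-defined maximal torus of $P$. By \cite[Cor.~6.5]{BMR} there is a unique R-Levi subgroup $L$ of $P$ with $T \subseteq L$; I must show $L$ is $k$-defined. Using part (ii), pick $\mu \in Y_k(G)$ with $P \subseteq P_\mu$ and $P^0 = P_\mu^0$; since $T$ is a $k$-defined maximal torus of $P_\mu$ as well, standard conjugacy of $k$-defined maximal tori in $R_u(P_\mu)$ lets me replace $\mu$ by a $P_\mu$-conjugate (hence still $k$-defined) so that $T \subseteq L_\mu$; then $L_\mu$ is $k$-defined by part (i), and $L = P \cap L_\mu$ is an R-Levi subgroup of $P$ by Lemma \ref{lem:Levidown} (applied to $P \subseteq P_\mu$ with R-Levi $L_\mu$ of $P_\mu$), so $L$ is $k$-defined, being the intersection of two $k$-defined subgroups, and it contains $T$; uniqueness then identifies it with the $L$ above. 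Finally, given two $k$-defined R-Levi subgroups $L_1, L_2$ of $P$, they are $R_u(P)$-conjugate by the general theory, and the conjugating element is unique because $N_{R_u(P)}(L_1) = L_1 \cap R_u(P) = \{1\}$; this unique element $u$ satisfies $L_2 = \gamma\cdot L_2 = (\gamma\cdot u)\cdot L_1$ for every $\gamma \in \Gamma$ (using that $L_1, L_2$ are $\Gamma$-stable and $R_u(P)$ is $\Gamma$-stable by part (i)), so by uniqueness $\gamma \cdot u = u$, i.e.\ $u \in R_u(P)(k)$.

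The main obstacle I expect is in part (iii): arranging that the chosen $\mu$ can be taken with $T \subseteq L_\mu$ while remaining $k$-defined — this requires a rationality statement about conjugating a $k$-defined torus into a $k$-defined R-Levi subgroup, which needs care because $R_u(P_\mu)$-conjugacy of $k$-defined maximal tori must itself be done over $k$ (using that $R_u(P_\mu)$ is $k$-split unipotent, so $H^1(k, R_u(P_\mu)) $-type obstructions vanish). The other parts are essentially bookkeeping with the Galois criterion and the connected-case results cited from \cite{spr2} and \cite{BMR}.
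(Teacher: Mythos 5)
Your part (ii) contains a step that is simply false, and it is the crux of the whole lemma. You take any $\mu\in Y_k(G^0)$ with $P_\mu^0=P_\lambda^0$ (via \cite[Lem.~15.1.2(ii)]{spr2}) and then argue $P_\lambda\subseteq N_G(P_\mu^0)=P_\mu$, asserting that an R-parabolic subgroup is self-normalizing and determined by its identity component. The paper recalls before Lemma~\ref{lem:Levidown} only that $P^0=Q^0$ forces $R_u(P)=R_u(Q)$, \emph{not} $P=Q$, and indeed for non-connected $G$ an R-parabolic subgroup is neither determined by $P^0$ nor equal to $N_G(P^0)$. Concretely, let $G=\langle\sigma\rangle\ltimes(\ovl{k}^*\times\ovl{k}^*)$ with $\sigma$ swapping the factors (a reductive, non-connected group), $\lambda(a)=(a,a)$ and $\mu(a)=(a,1)$. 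Then $P_\lambda=G$, so $P_\lambda^0=G^0=P_\mu^0$, but $\mu(a)\sigma\mu(a)^{-1}=\sigma\cdot(a^{-1},a)$ has no limit as $a\to 0$, so $\sigma\notin P_\mu$ and $P_\lambda\not\subseteq P_\mu$, while $N_G(P_\mu^0)=G\neq P_\mu$. So an arbitrary $k$-defined $\mu$ with the correct identity component does not work; the choice of $\mu$ must remember $\lambda$. This is exactly what the paper does: conjugate $\lambda$ into $Y(T)$ for a $k$-defined maximal torus $T$ of $P_\lambda$, let $\mu\in Y(T)$ be the sum of the finitely many $\Gamma$-conjugates of $\lambda$ (so $\mu\in Y_k(T)$), use $P_{\gamma\cdot\lambda}=P_\lambda$ (from $k$-definedness of $P_\lambda$ and Lemma~\ref{lem:Galoislim}) and pairings with roots to get $P_\mu^0=P_\lambda^0$, and an equivariant embedding of $G$ into a $G$-module to get $P_\lambda\subseteq P_\mu$.

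Part (iii) then inherits this problem and has two further gaps, one of which you flag yourself but do not close. To arrange $T\subseteq L_\mu$ you propose replacing $\mu$ by an $R_u(P_\mu)$-conjugate ``which remains $k$-defined''; but the statement that the unique R-Levi of $P_\mu$ through the $k$-defined torus $T$ can be reached by a \emph{rational} conjugation is essentially the assertion being proved, and the appeal to conjugacy of $k$-defined maximal tori over $k$ (or to vanishing of some $H^1$) is not available — $k$-defined maximal tori are in general not rationally conjugate. The paper avoids the issue entirely: because the $\mu$ of (ii) is constructed inside $Y_k(T)$ for the given $T$, the containment $T\subseteq L_\mu$ is automatic. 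Second, over an imperfect field the intersection of two $k$-defined subgroups is only $k$-closed, so ``$P\cap L_\mu$ is $k$-defined, being the intersection of two $k$-defined subgroups'' is not a valid conclusion; the paper instead applies the Galois criterion, observing that $P\cap L_\mu\cap G(k_s)$ is $\Gamma$-stable and dense because the components of $P\cap L_\mu$ are components of the $k$-defined group $L_\mu$, in which separable points are dense. (Your descent argument for the conjugating element $u$ also presupposes $u\in R_u(P)(k_s)$ before $\Gamma$ can act on it, which needs justification when $k$ is imperfect; the paper outsources this to \cite[Prop.~V.20.5]{Bo} and \cite[Cors.~6.5--6.7]{BMR}.) Part (i) of your proposal is essentially right in outline, but the hedged ``perfect field'' remark should be replaced by the citations the paper uses: \cite[Prop.~V.20.5]{Bo} for $R_u(P_\lambda^0)$, \cite[Cor.~III.9.2]{Bo} for $L_\lambda=C_G(\lambda(\ovl{k}^*))$, and then $P_\lambda=L_\lambda\ltimes R_u(P_\lambda)$ to get $k$-definedness of $P_\lambda$, rather than an unproved density-of-separable-points claim for $P_\lambda$ itself.
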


\begin{proof}
(i).\ We have that $P_\lambda^0$ is $k$-defined,
so $R_u(P_\lambda) = R_u(P_\lambda^0)$ is
$k$-defined, by \cite[Prop.~V.20.5]{Bo}.
Now assume that $\lambda\in Y_k(G)$.
Then $L_\lambda= C_G(\lambda(\ovl{k}^*))$
is defined over $k$, by \cite[Cor.~III.9.2]{Bo}.
Now the multiplication map $G\times G\ra G$ is $k$-defined,
so $P_\lambda$ is $k$-defined, thanks to \cite[Cor.\ AG.14.5]{Bo},
and the stated isomorphism is then clearly also $k$-defined.

(ii).\ After conjugating $\lambda$ by an element of
$P_\lambda$, we may assume that
$\lambda\in Y(T)$ for some $k$-defined maximal torus $T$ of $P_\lambda$.
Since $T$ splits over a finite Galois extension of $k$, $\lambda$ has only finitely many
$\Gamma$-conjugates. Let $\mu\in Y(T)$ be their sum. Since $P_\lambda$ is $k$-defined, we have
$P_{\gamma\cdot\lambda}=P_\lambda$ for all $\gamma\in\Gamma$. By considering the
pairings of $\lambda$ and $\mu$ with the coroots of $G$ relative to $T$, we deduce that
$P_\mu^0=P_\lambda^0$ (cf.\ \cite[15.1.2]{spr2}). Using a $G$-equivariant embedding of $G$ acting on itself
by conjugation into a finite-dimensional $G$-module, we deduce that
$\underset{a\to 0}{\lim}\,  \mu(a)\cdot g$ exists if
$\underset{a\to 0}{\lim}\,  (\gamma\cdot\lambda)(a)\cdot g$ exists for all $\gamma\in\Gamma$.
So $P_\lambda\subseteq P_\mu$.

(iii).\ Because of \cite[Prop.~V.20.5]{Bo}
and \cite[Cors.~6.5, 6.6, 6.7]{BMR}, it is enough
to show that the unique R-Levi subgroup
of $P$ containing a given $k$-defined maximal torus
of $P$ is $k$-defined.
(Here the required uniqueness follows from \emph{loc.\ cit.})
Let $T$ be a $k$-defined maximal torus
of $P$. %and let $\lambda \in Y(T)$ such that $P=P_\lambda$.
By the proof of (ii),
there exists $\mu\in Y_k(T)$ such that $P\subseteq P_\mu$ and
$P^0=P_\mu^0$.
Clearly, $L_\mu$ is the R-Levi subgroup of $P_\mu$ containing $T$, and
it is $k$-defined by (i). The unique R-Levi subgroup of $P$ containing $T$ is $P\cap L_\mu$, by Lemma \ref{lem:Levidown}.
Since $P\cap G(k_s)$ and $L_\mu\cap G(k_s)$ are $\Gamma$-stable, the same holds for
$P\cap L_\mu\cap G(k_s)$. So it suffices to show that this set is dense in $P\cap L_\mu$.
This follows because the components of $P\cap L_\mu$ are components of $L_\mu$ and the
separable points are dense in each component of $L_\mu$.
\end{proof}

\begin{cor}
\label{cor:kLevi}
Let $\lambda\in Y_k(G)$ and let $\mu\in Y(G)$ such that $P_\lambda=P_\mu$
and $L_\mu$ is $k$-defined.
Then there exists $\nu\in Y_k(G)$ such that $P_\lambda=P_\nu$ and $L_\mu=L_\nu$.
\end{cor}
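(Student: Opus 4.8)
The plan is to combine Lemma \ref{lem:Rparrat}(ii) with Lemma \ref{lem:Rparrat}(iii). First I would apply Lemma \ref{lem:Rparrat}(ii) to $\lambda$: since $P_\lambda$ is $k$-defined (as $\lambda\in Y_k(G)$, by part (i)), there exists $\mu_0\in Y_k(G)$ with $P_\lambda\subseteq P_{\mu_0}$ and $P_\lambda^0=P_{\mu_0}^0$. However, this does not obviously relate $L_\mu$ to anything $k$-defined, so instead I would work directly with the hypothesis that $L_\mu$ is $k$-defined. Pick a $k$-defined maximal torus $T$ of $L_\mu$; since $L_\mu$ is an R-Levi subgroup of $P_\mu=P_\lambda$, $T$ is a $k$-defined maximal torus of $P_\lambda$, and $L_\mu$ is \emph{an} R-Levi subgroup of $P_\lambda$ containing $T$. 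By the uniqueness statement in Lemma \ref{lem:Rparrat}(iii) (or \cite[Cor.~6.5]{BMR}), $L_\mu$ is \emph{the} R-Levi subgroup of $P_\lambda$ containing $T$, and Lemma \ref{lem:Rparrat}(iii) tells us it is $k$-defined — which we already knew — but more importantly it is the canonical one attached to $T$.

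Next I would produce a $k$-defined cocharacter realizing this Levi. Apply the proof of Lemma \ref{lem:Rparrat}(ii) (or part (iii)) to the $k$-defined R-parabolic $P_\lambda$ and the $k$-defined maximal torus $T$: there is $\nu\in Y_k(T)\subseteq Y_k(G)$ with $P_\lambda\subseteq P_\nu$ and $P_\lambda^0=P_\nu^0$, and $L_\nu$ is the R-Levi subgroup of $P_\nu$ containing $T$. Then $P_\lambda\cap L_\nu$ is the unique R-Levi subgroup of $P_\lambda$ containing $T$, by Lemma \ref{lem:Levidown}, hence equals $L_\mu$. But this only gives $L_\mu$ as an intersection, not as $L_\nu$ for a $k$-defined $\nu$ with $P_\nu=P_\lambda$ on the nose. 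To fix the parabolic as well, I would choose $\nu$ more carefully: start from $\lambda$ itself, which already satisfies $P_\lambda=P_\lambda$, and conjugate within $P_\lambda$ by an element of $R_u(P_\lambda)(k)$ to move its R-Levi to $L_\mu$. Concretely, $\lambda\in Y_k(G)$ determines an R-Levi $L_\lambda$ of $P_\lambda$ containing some $k$-defined maximal torus $T'$; by Lemma \ref{lem:Rparrat}(iii) there is a unique $u\in R_u(P_\lambda)(k)$ with $u\cdot L_\lambda = L_\mu$ (both being $k$-defined R-Levi subgroups of the $k$-defined R-parabolic $P_\lambda$). Set $\nu = u\cdot\lambda$. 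Then $\nu\in Y_k(G)$ since $u\in G(k)$ and $Y_k(G)$ is $G(k)$-stable; $P_\nu = u P_\lambda u^{-1} = P_\lambda$ since $u\in P_\lambda$ normalizes $P_\lambda$; and $L_\nu = u\cdot L_\lambda = L_\mu$.

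The main obstacle is the last step: one must verify that $u$ can be chosen in $R_u(P_\lambda)(k)$ rather than merely in $R_u(P_\lambda)(\ovl k)$. This is exactly what Lemma \ref{lem:Rparrat}(iii) supplies, provided both $L_\lambda$ and $L_\mu$ are $k$-defined R-Levi subgroups of the $k$-defined R-parabolic $P_\lambda$: $L_\lambda$ is $k$-defined by Lemma \ref{lem:Rparrat}(i) since $\lambda\in Y_k(G)$, and $L_\mu$ is $k$-defined by hypothesis. Thus the unique $R_u(P_\lambda)$-conjugating element lies in $R_u(P_\lambda)(k)$, and everything goes through. One small point to check is that $L_\mu\subseteq P_\lambda$ and is genuinely an R-Levi subgroup of $P_\lambda$ (not just of some conjugate): this is immediate from $P_\mu=P_\lambda$, since an R-Levi of $P_\mu$ is by definition of the form $L_{\mu'}$ with $P_{\mu'}=P_\mu=P_\lambda$.
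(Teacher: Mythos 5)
Your final argument is exactly the paper's proof: since $L_\lambda$ is $k$-defined (Lemma \ref{lem:Rparrat}(i)) and $L_\mu$ is $k$-defined by hypothesis, both are $k$-defined R-Levi subgroups of the $k$-defined R-parabolic $P_\lambda=P_\mu$, so Lemma \ref{lem:Rparrat}(iii) gives $u\in R_u(P_\lambda)(k)$ with $uL_\lambda u^{-1}=L_\mu$, and $\nu=u\cdot\lambda$ works. The preliminary detours via Lemma \ref{lem:Rparrat}(ii) and maximal tori, which you yourself discard, are unnecessary; the conclusion is correct and matches the paper's one-line argument.
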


\begin{proof}
By Lemma~\ref{lem:Rparrat}(iii), there exists
$u\in R_u(P_\lambda)(k)$ such that
$L_{u\cdot\lambda}=uL_\lambda u\inverse=L_\mu$, so we can take $\nu=u\cdot\lambda$.
\end{proof}

\subsection{$G$-varieties}
\label{subsec:Gvars}
If $G$ acts on a set $V$, then we denote for a subset $S$ of $V$, the pointwise stabilizer $\{g\in G \mid g\cdot s= s \textrm{ for all } s\in S\}$ of $S$ in $G$ by $C_G(S)$ and the setwise stabilizer $\{g\in G \mid g\cdot S = S\}$ of $S$ in $V$ by $N_G(S)$.

Now suppose $G$ acts on an affine variety $V$ and let $v \in V$.
Then for each cocharacter $\lambda \in Y(G)$, we can
define a morphism of varieties
$\phi_{v,\lambda}:\ovl{k}^* \to V$ via the formula
$\phi_{v,\lambda}(a) = \lambda(a)\cdot v$.
If this morphism extends to a morphism
$\widehat\phi_{v,\lambda}:\ovl{k} \to V$, then
we say that $\underset{a\to 0}{\lim}\, \lambda(a) \cdot v$ exists,
and set this limit
equal to $\widehat\phi_{v,\lambda}(0)$; note that such an extension,
if it exists, is necessarily unique.

Let $\lambda\in Y(G)$. Then the set of $v\in V$ such that $\underset{a\to 0}{\lim}\, \lambda(a) \cdot v$ exists
is $P_\lambda$-stable and we have
\begin{equation}
\label{eqn:limx.v}
 \lim_{a\ra 0} \lambda(a)\cdot (x\cdot v)= c_\lambda(x)\cdot \left(\lim_{a\ra 0} \lambda(a)\cdot v\right),
\end{equation}
for all $x\in P_\lambda$ and $v\in V$.
Suppose that the $G$-variety $V$ is $k$-defined.
It is easily shown that if $\phi_{v,\lambda}$ is $k$-defined,
then $\widehat\phi_{v,\lambda}$ is $k$-defined and $\lim_{a\ra 0} \lambda(a)\cdot v\in V(k)$; in particular,
this is the case if $\lambda\in Y_k(G)$ and $v\in V(k)$.

\begin{rem}
\label{rem:linear}
In many of our proofs,
we want to reduce the case of a general ($k$-defined) affine $G$-variety $V$
to the case of a ($k$-defined)
rational $G$-module $V_0$.
Such a reduction is possible, thanks to \cite[Lem.~1.1(a)]{kempf},
for example: given $V$, there is a $k$-defined $G$-equivariant embedding of $V$ inside some $V_0$.
As this situation arises many times in the sequel, we now set up some
standard notation which will be in force throughout the paper.

Let $V$ be a rational $G$-module.
Given $\lambda \in Y(G)$ and $n\in\mathbb{Z}$, we define
\begin{align}
\label{eq:defvlambda0}
V_{\lambda,n} :=
\{v\in V\mid \lambda(a)\cdot v=a^nv\text{\ for all\ }a\in \ovl{k}^*\},\\
V_{\lambda, \ge0}:=\sum_{n\ge0} V_{\lambda,n}
\quad
\textrm{ and }
\quad
V_{\lambda, >0}:=\sum_{n>0} V_{\lambda,n}.\notag
\end{align}
Then $V_{\lambda,\ge0}$ consists of the vectors
$v\in V$ such that $\underset{a \ra 0}{\lim} \lambda(a)\cdot v$ exists,
$V_{\lambda,>0}$ is the subset of vectors $v \in V$ such that
$\underset{a \ra 0}{\lim} \lambda(a)\cdot v = 0$,
and $V_{\lambda,0}$ is the subset of vectors $v \in V$ such that
$\underset{a \ra 0}{\lim} \lambda(a)\cdot v = v$.
Furthermore, the limit map
$v\mapsto\underset{a \ra 0}{\lim} \lambda(a)\cdot v$ is nothing
but the projection of $V_{\lambda,\ge0}$ with kernel $V_{\lambda, >0}$
and image $V_{\lambda,0}$.
Of course, similar remarks apply to
$-\lambda$, $V_{\lambda, \le0} := V_{-\lambda, \ge0}$,
%$V_{\lambda, 0} := V_{-\lambda,0}$
and $V_{\lambda,<0}:=V_{-\lambda,>0}$.
If the $G$-module $V$ is defined over $k$,
then each $V_{\lambda,n}$ and $V_{\lambda,>0}$,
etc., is $k$-defined (cf.~\cite[II.5.2]{Bo}).

Now let $T$ be a torus in $G$ with $\lambda\in Y(T)$.
For $\chi\in X(T)$,
let $V_\chi$ denote the corresponding
weight space of $T$ in $V$. If $v\in V$, then we denote
by $v_\chi$ the component of $v$ in the
weight space $V_\chi$ and we put
${\rm supp}_T(v)=\{\chi\in X(T)\mid v_\chi\ne0\}$,
called the \emph{support of $v$ with respect to $T$}.
Then $V_{\lambda, 0}$, $V_{\lambda,\ge0}$ and $V_{\lambda, >0}$
are the direct sums of the subspaces $V_{\lambda,\langle\lambda,\chi\rangle}$,
where $\chi \in X(T)$ is such that $\langle\lambda,\chi\rangle=0$,
$\ge0$ and $>0$,
respectively. Furthermore, $v\in V_{\lambda,\ge0}$ if and only if
$\langle\lambda,\chi\rangle\ge0$ for all $\chi\in{\rm supp}_T(v)$.

Finally, we recall a standard result \cite[Lem.\ 5.2]{Bo1}.
Suppose $T$ is a maximal torus of $G$ with $\lambda\in Y(T)$.
Let $\alpha\in \Psi = \Psi(G,T)$, $v\in V_{\lambda,n}$ and $u\in U_\alpha$.
Then
\begin{equation}
\label{eqn:repborel}
 u\cdot v-v\in \sum_{m\ge1} V_{\lambda,n+ m\langle\lambda,\alpha\rangle}.
\end{equation}
Hence, for any $u\in R_u(P_\lambda)$ and any
$v\in V_{\lambda,\geq 0}$, we have
\begin{equation}
\label{eqn:repborel2}
 u\cdot v-v\in V_{\lambda,>0}.
\end{equation}
\end{rem}

We continue with some further preliminary results used in the proofs below.

\begin{lem}
\label{lem:Ruconj}
Suppose $G$ acts on an affine variety $V$.
Let $v\in V$, let $\lambda\in Y(G)$ and let
$u\in R_u(P_\lambda)$.
Then $\underset{a \ra 0}{\lim}\lambda(a)\cdot v$
exists and equals $u\cdot v$
if and only if $u^{-1}\cdot\lambda$ centralizes $v$.
\end{lem}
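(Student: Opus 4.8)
The statement to prove is Lemma~\ref{lem:Ruconj}: for $v\in V$, $\lambda\in Y(G)$ and $u\in R_u(P_\lambda)$, the limit $\lim_{a\to 0}\lambda(a)\cdot v$ exists and equals $u\cdot v$ if and only if $u^{-1}\cdot\lambda$ centralizes $v$.

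The plan is to reduce to the linear case using Remark~\ref{rem:linear}: embed $V$ $G$-equivariantly into a rational $G$-module $V_0$, so without loss of generality $V$ is a rational $G$-module and all the weight-space decomposition machinery of that remark is available. Write $v = v_{>0} + v_0 + v_{<0}$ for the decomposition of $v$ into the pieces lying in $V_{\lambda,>0}$, $V_{\lambda,0}$ and $V_{\lambda,<0}$.

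For the forward direction, suppose $v':=\lim_{a\to 0}\lambda(a)\cdot v$ exists and equals $u\cdot v$. Existence of the limit forces $v\in V_{\lambda,\ge 0}$, i.e. $v_{<0}=0$, and then $v' = v_0$ is the projection of $v$ onto $V_{\lambda,0}$ with kernel $V_{\lambda,>0}$; thus $u\cdot v = v_0$, equivalently $u\cdot v - v = v_0 - v = -v_{>0} \in V_{\lambda,>0}$. I want to conclude from this that $u^{-1}\cdot v = v$, which is exactly the assertion that $u^{-1}\cdot\lambda$ centralizes $v$ after unwinding: $u^{-1}\cdot\lambda$ centralizes $v$ means $(u^{-1}\lambda(a)u)\cdot v = ?$... more directly, $u^{-1}\cdot\lambda$ centralizing $v$ is equivalent to $v \in V_{u^{-1}\cdot\lambda, 0} = u^{-1}\cdot V_{\lambda,0}$, i.e. $u\cdot v \in V_{\lambda,0}$ — and we have just shown $u\cdot v = v_0 \in V_{\lambda,0}$, which is precisely what is needed. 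So actually the forward direction drops out almost immediately once the linear reduction is in place. The converse is symmetric: if $u^{-1}\cdot\lambda$ centralizes $v$ then $u\cdot v \in V_{\lambda,0}$, and since $u\cdot v - v \in V_{\lambda,>0}$ by \eqref{eqn:repborel2} (as $u\in R_u(P_\lambda)$, provided $v\in V_{\lambda,\ge 0}$; but $v = u^{-1}\cdot(u\cdot v)$ and $u^{-1}\in R_u(P_\lambda)$, $u\cdot v\in V_{\lambda,0}\subseteq V_{\lambda,\ge0}$, so indeed $v\in V_{\lambda,\ge 0}$), the limit exists and equals the $V_{\lambda,0}$-component of $v$, which is forced to be $u\cdot v$ since $v - u\cdot v \in V_{\lambda,>0}$.

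The one genuine subtlety — and the step I would be most careful about — is making sure the equivalence ``$u^{-1}\cdot\lambda$ centralizes $v$'' $\iff$ ``$u\cdot v\in V_{\lambda,0}$'' is stated and used correctly: by definition $u^{-1}\cdot\lambda$ centralizes $v$ iff $v\in V_{u^{-1}\cdot\lambda,0}$, and $V_{u^{-1}\cdot\lambda,0} = \{w : (u^{-1}\lambda(a)u)\cdot w = w\} = u^{-1}\cdot\{w' : \lambda(a)\cdot w' = w'\} = u^{-1}\cdot V_{\lambda,0}$, so $v\in V_{u^{-1}\cdot\lambda,0}$ iff $u\cdot v\in V_{\lambda,0}$. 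Everything else is bookkeeping with \eqref{eqn:repborel2} and the description of the limit map as the projection with kernel $V_{\lambda,>0}$ and image $V_{\lambda,0}$ from Remark~\ref{rem:linear}. I expect no real obstacle beyond keeping the $u$ versus $u^{-1}$ conventions straight and noting that $u\in R_u(P_\lambda)$ implies $u^{-1}\in R_u(P_\lambda)$ so that \eqref{eqn:repborel2} applies to whichever of $v$, $u\cdot v$ one needs.
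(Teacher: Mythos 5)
Your proof is correct, but it takes a different route from the paper's, at least for the ``if'' direction. The paper argues directly on the affine variety $V$, with no linearization: for the forward implication it simply notes that the limit is fixed by $\lambda(\ovl{k}^*)$, so $\lambda$ fixes $u\cdot v$ and hence $u^{-1}\cdot\lambda$ centralizes $v$; for the converse it computes
$u\cdot v=\bigl(\lim_{a\ra 0}\lambda(a)u^{-1}\lambda(a)^{-1}\bigr)\cdot u\cdot v=\lim_{a\ra 0}\lambda(a)\cdot\bigl(u^{-1}\lambda(a)^{-1}u\bigr)\cdot v=\lim_{a\ra 0}\lambda(a)\cdot v$,
using only that $u\in R_u(P_\lambda)$ forces $\lambda(a)u^{-1}\lambda(a)^{-1}\to 1$ and that $u^{-1}\cdot\lambda$ fixes $v$. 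You instead reduce to a rational $G$-module via Remark~\ref{rem:linear} and work with the grading $V_{\lambda,0}$, $V_{\lambda,>0}$, the description of the limit map as the projection, and Eqn.~\eqref{eqn:repborel2} applied to $u^{-1}$ and $u\cdot v$ (or, equivalently, to $u$ and $v$ after noting $V_{\lambda,\ge 0}$ is $P_\lambda$-stable). Both arguments are sound; yours makes the weight-space mechanism explicit but needs the (harmless, closed, $G$-equivariant) embedding into a module and the content of \eqref{eqn:repborel2}, whereas the paper's is a two-line manipulation of limits valid on $V$ itself. Your key bookkeeping step, $v\in V_{u^{-1}\cdot\lambda,0}\iff u\cdot v\in V_{\lambda,0}$, is stated and used correctly; the momentary aside ``I want to conclude $u^{-1}\cdot v=v$'' is not what centralizing means and is not needed, but you abandon it and the final chain of equivalences is the right one.
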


\begin{proof}
If $\underset{a \ra 0}{\lim}\lambda(a)\cdot v$
exists and equals $u\cdot v$, then $\lambda$
fixes $u\cdot v$ and therefore $u^{-1}\cdot\lambda$
centralizes $v$. Now assume that the latter is the case.
Then $\underset{a \ra 0}{\lim}\lambda(a)u^{-1}\lambda(a)^{-1}=1$
and $u^{-1}\lambda(a)^{-1}u$ fixes $v$ for all $a\in \ovl{k}^*$, so
$u\cdot v
=
\left(\underset{a \ra 0}{\lim}\lambda(a)u^{-1}\lambda(a)^{-1}\right)\cdot u\cdot v
= \underset{a\ra 0}{\lim}\lambda(a)\cdot (u^{-1}\lambda(a)^{-1}u)\cdot v
= \underset{a\ra 0}{\lim}\lambda(a)\cdot v$.
\end{proof}

\begin{lem}
\label{lem:pconjuconj}
Suppose $G$ acts on an affine variety $V$.
Let $v\in V$, $\lambda\in Y(G)$,
such that $v':=\underset{a\to 0}{\lim} \lambda(a)\cdot v$
exists. Furthermore, let $x\in P_{-\lambda}$ and $u\in R_u(P_\lambda)$
be such that $xu\cdot v$ is $\lambda(\ovl{k}^*)$-fixed. Then
$v'= u\cdot v$.
\end{lem}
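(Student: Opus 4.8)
The plan is to put $w := u\cdot v$ and prove two facts: first that $\lim_{a\to 0}\lambda(a)\cdot w$ exists and equals $v'$, and second that $w$ is fixed by $\lambda(\ovl k^*)$. Granting these, $v'=\lim_{a\to 0}\lambda(a)\cdot w=w=u\cdot v$, as required.

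The first fact is immediate from \eqref{eqn:limx.v}: since $u\in R_u(P_\lambda)$ we have $c_\lambda(u)=1$, and because $\lim_{a\to 0}\lambda(a)\cdot v=v'$ exists, the $P_\lambda$-stability of the locus where this limit exists shows $\lim_{a\to 0}\lambda(a)\cdot w=\lim_{a\to 0}\lambda(a)\cdot(u\cdot v)=c_\lambda(u)\cdot v'=v'$. The second fact is where the hypotheses on $x$ and on $xu\cdot v$ come in — but only in a very mild way. Since $xu\cdot v=x\cdot w$ is fixed by $\lambda(\ovl k^*)=(-\lambda)(\ovl k^*)$, the limit $\lim_{a\to 0}(-\lambda)(a)\cdot(x\cdot w)$ trivially exists. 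The set of points of $V$ at which $\lim_{a\to 0}(-\lambda)(a)\cdot(-)$ exists is $P_{-\lambda}$-stable, and $x^{-1}\in P_{-\lambda}$ carries $x\cdot w$ to $w$; hence $\lim_{a\to 0}(-\lambda)(a)\cdot w$ exists as well. So both $\lim_{a\to 0}\lambda(a)\cdot w$ and $\lim_{a\to 0}\lambda(a)^{-1}\cdot w$ exist. Using a $G$-equivariant embedding $V\hookrightarrow V_0$ into a rational $G$-module as in Remark~\ref{rem:linear}, this says $w\in (V_0)_{\lambda,\ge 0}\cap (V_0)_{\lambda,\le 0}=(V_0)_{\lambda,0}$, so $\lambda(a)\cdot w=w$ for all $a$. (Equivalently, the orbit map $a\mapsto\lambda(a)\cdot w$ extends to a morphism $\mathbb P^1\to V$ whose image is an irreducible complete subvariety of the affine variety $V$, hence a point.)

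To finish, having shown $w=u\cdot v$ is $\lambda(\ovl k^*)$-fixed, one can conclude directly as above, or alternatively note that $u^{-1}\cdot\lambda$ centralizes $v$ (since $(u^{-1}\lambda(a)u)\cdot v=u^{-1}\cdot(\lambda(a)\cdot w)=u^{-1}\cdot w=v$) and invoke Lemma~\ref{lem:Ruconj} to get $v'=u\cdot v$. I do not expect any genuine obstacle here: the only point with real content is that a point admitting limits under both $\lambda$ and $-\lambda$ must be $\lambda$-fixed, and the element $x$ is used purely through the membership $x^{-1}\in P_{-\lambda}$, which transports a $\lambda$-fixed point back to $w$; everything else is bookkeeping with \eqref{eqn:limx.v} and the $P_{\pm\lambda}$-stability of the relevant ``limit exists'' loci.
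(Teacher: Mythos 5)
Your proof is correct and follows essentially the same route as the paper's: both pass to a rational $G$-module (Remark~\ref{rem:linear}) and exploit that $V_{\lambda,\ge 0}$ is $P_\lambda$-stable while $V_{\lambda,\le 0}$ is $P_{-\lambda}$-stable to force a suitable translate of $v$ into $V_{\lambda,0}$, then conclude with Eqn.~\eqref{eqn:limx.v}. The only difference is a mild streamlining: you apply $P_{-\lambda}$-stability directly to $x^{-1}$ acting on the $\lambda$-fixed point $xu\cdot v$, so that $u\cdot v$ itself lands in $V_{\lambda,0}$, whereas the paper writes $x=yl$ with $y\in R_u(P_{-\lambda})$, $l\in L_\lambda$, shows $lu\cdot v\in V_{\lambda,0}$ and then strips off $l$ via $lu\cdot v=l\cdot v'$.
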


\begin{proof}
Without loss, we may assume that $V$ is a rational $G$-module
(cf.\ Remark~\ref{rem:linear}).
Write $x=yl$, where $y\in R_u(P_{-\lambda})$ and $l\in L_\lambda$.
Since $V_{\lambda,\le0}$ is $P_{-\lambda}$-stable and $ylu\cdot v\in V_{\lambda, 0}$,
we have that $lu\cdot v=y^{-1}ylu\cdot v\in V_{\lambda,\le0}$. On the other hand,
$lu\cdot v\in V_{\lambda,\ge0}$, since $v\in V_{\lambda,\ge0}$ and $V_{\lambda,\ge0}$
is $P_\lambda$-stable. So $lu\cdot v\in V_{\lambda, 0}$.
It follows that
$$
lu\cdot v= \underset{a \ra 0}{\lim}\lambda(a)\cdot lu\cdot v=
\underset{a \ra 0}{\lim}\lambda(a) lu\lambda(a)^{-1}\cdot\underset{a \ra 0}{\lim}\lambda(a)\cdot v=
l\cdot v'.
$$
So $v'= u\cdot v$.
\end{proof}

\begin{rem}
The proof of Lemma \ref{lem:pconjuconj}
also works if we replace the assumption that
$xu\cdot v$ is $\lambda(\ovl{k}^*)$-fixed by the weaker assumption
that $\underset{a\to 0}{\lim} \lambda(a)^{-1}\cdot(xu\cdot v)$ exists.
If $x u\cdot v$ is $\lambda(\ovl{k}^*)$-fixed, then we can draw the additional
conclusion that $ylu\cdot v=lu\cdot v$, since $R_u(P_{-\lambda})$
acts trivially on $V_{\lambda,\le0}/V_{\lambda,<0}$,
by Eqn.~\eqref{eqn:repborel2}.
\end{rem}

\begin{lem}
\label{lem:dblecochar}
Let $V$ be a rational $G$-module.
Let $\lambda,\mu\in Y(G)$ such that $\lambda(\ovl{k}^*)$
and $\mu(\ovl{k}^*)$ commute. Then for $t\in {\mathbb N}$
sufficiently large, the following hold:
\begin{enumerate}[{\rm (i)}]
\item $V_{t\lambda+ \mu,\ge0}\subseteq V_{\lambda,\ge0}$,
$V_{t\lambda+ \mu,>0}\supseteq V_{\lambda,>0}$ and
$V_{t\lambda+ \mu,0}=V_{\lambda,0}\cap V_{\mu,0}$;
\item $P_{t\lambda+ \mu}\subseteq P_\lambda$
(hence $R_u(P_{t\lambda+ \mu})\supseteq R_u(P_\lambda)$)
and $L_{t\lambda+ \mu}=L_\lambda\cap L_\mu$.
\end{enumerate}
Furthermore, if $t\in {\mathbb N}$ is such that property (i) holds
and $v\in V$ is such that $v':= \lim_{a\ra 0} \lambda(a)\cdot v$
and $v'':= \lim_{a\ra 0} \mu(a)\cdot v'$ exist,
then $\lim_{a\ra 0} (t\lambda+ \mu)(a)\cdot v$ exists and equals $v''$.
\end{lem}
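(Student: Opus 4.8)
The plan is to reduce everything to explicit weight-space computations with a common torus, and then read off the statements about $P$, $L$ and limits from the description of $V_{\nu,\ge0}$ etc.\ in Remark~\ref{rem:linear}. First I would fix a maximal torus $T$ of $G$ containing the images of both $\lambda$ and $\mu$; this is possible since $\lambda(\ovl{k}^*)$ and $\mu(\ovl{k}^*)$ commute, so they lie in a common torus, hence in a common maximal torus. Now pairing with characters $\chi\in X(T)$ is the key bookkeeping device: for $v\in V_\chi$ we have $(t\lambda+\mu)(a)\cdot v = a^{t\langle\lambda,\chi\rangle+\langle\mu,\chi\rangle}v$. The essential elementary fact is that there are only finitely many $\chi\in X(T)$ occurring as weights of $T$ on $V$ (together with finitely many roots $\alpha\in\Psi(G,T)$ relevant for the group statements), so for $t$ large enough the sign of $t\langle\lambda,\chi\rangle+\langle\mu,\chi\rangle$ is governed by the sign of $\langle\lambda,\chi\rangle$, with ties (where $\langle\lambda,\chi\rangle=0$) broken by the sign of $\langle\mu,\chi\rangle$. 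Quantitatively, choosing $t$ larger than $\max|\langle\mu,\chi\rangle|$ over the finitely many relevant $\chi$ (and $\alpha$) does the job; I would state this once and use it for both (i) and (ii).

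For part~(i): decompose $V$ into $T$-weight spaces. A weight $\chi$ contributes to $V_{t\lambda+\mu,\ge0}$ iff $t\langle\lambda,\chi\rangle+\langle\mu,\chi\rangle\ge0$; for $t$ large this forces $\langle\lambda,\chi\rangle\ge0$, giving $V_{t\lambda+\mu,\ge0}\subseteq V_{\lambda,\ge0}$. Dually, $\langle\lambda,\chi\rangle>0$ forces $t\langle\lambda,\chi\rangle+\langle\mu,\chi\rangle>0$ for $t$ large, giving $V_{\lambda,>0}\subseteq V_{t\lambda+\mu,>0}$. Finally $t\langle\lambda,\chi\rangle+\langle\mu,\chi\rangle=0$ with $t$ large forces both $\langle\lambda,\chi\rangle=0$ and $\langle\mu,\chi\rangle=0$, and conversely; this yields $V_{t\lambda+\mu,0}=V_{\lambda,0}\cap V_{\mu,0}$. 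For part~(ii), I would apply the same argument to the adjoint representation, or more precisely embed $G$ (acting on itself by conjugation) $G$-equivariantly and $k$-defined into a rational $G$-module as in Remark~\ref{rem:linear}, and note that $P_\nu$, $L_\nu$, $R_u(P_\nu)$ are cut out by the weight-space conditions relative to $T$ and the root subgroups $U_\alpha$: $P_\nu$ corresponds to $\langle\nu,\alpha\rangle\ge0$, $L_\nu$ to $\langle\nu,\alpha\rangle=0$, $R_u(P_\nu)$ to $\langle\nu,\alpha\rangle>0$. Running the sign analysis with $\chi$ replaced by roots $\alpha$ gives $P_{t\lambda+\mu}\subseteq P_\lambda$ and $L_{t\lambda+\mu}=L_\lambda\cap L_\mu$ for $t$ large; the inclusion $R_u(P_{t\lambda+\mu})\supseteq R_u(P_\lambda)$ is then automatic, and in any case also follows directly since $\langle\lambda,\alpha\rangle>0$ implies $\langle t\lambda+\mu,\alpha\rangle>0$.

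For the final assertion, suppose $t$ is such that (i) holds and that $v':=\lim_{a\to0}\lambda(a)\cdot v$ and $v'':=\lim_{a\to0}\mu(a)\cdot v'$ exist. Then $v\in V_{\lambda,\ge0}$, so writing $v=v_0+v_+$ with $v_0\in V_{\lambda,0}$, $v_+\in V_{\lambda,>0}$, we have $v'=v_0$ by the description of the limit map as the projection with kernel $V_{\lambda,>0}$. Since $v''=\lim_{a\to0}\mu(a)\cdot v_0$ exists and $v_0\in V_{\lambda,0}$, decomposing $v_0$ into $\mu$-weight spaces inside $V_{\lambda,0}$ shows $v_0\in (V_{\lambda,0})_{\mu,\ge0}$ and $v''$ is the component of $v_0$ in $(V_{\lambda,0})_{\mu,0}=V_{\lambda,0}\cap V_{\mu,0}=V_{t\lambda+\mu,0}$. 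It remains to check that $v\in V_{t\lambda+\mu,\ge0}$ with $(t\lambda+\mu)$-limit equal to $v''$. Now $v_+\in V_{\lambda,>0}\subseteq V_{t\lambda+\mu,>0}$ by (i), so its $(t\lambda+\mu)$-limit is $0$; and for $v_0$, decomposing into $T$-weight spaces and using that on $V_{\lambda,0}$ the exponent of $(t\lambda+\mu)(a)$ on the $\chi$-weight space is just $t\cdot0+\langle\mu,\chi\rangle=\langle\mu,\chi\rangle$, the $(t\lambda+\mu)$-limit of $v_0$ equals the $\mu$-limit of $v_0$, which is $v''$. Adding these, $\lim_{a\to0}(t\lambda+\mu)(a)\cdot v=v''$, as required.

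I expect the main obstacle to be purely organizational rather than conceptual: making sure the single choice of "$t$ sufficiently large" is seen to work uniformly for all three parts (the weight spaces in~(i), the roots in~(ii), and the limit claim), and being careful in~(ii) that the statements about $P_\nu$, $L_\nu$, $R_u(P_\nu)$ really are governed only by the pairings with roots (so that one genuinely reduces to the same finite sign analysis). Once the finiteness of the relevant set of characters/roots is invoked and the threshold for $t$ is fixed accordingly, everything else is a routine comparison of signs of $t\langle\lambda,-\rangle+\langle\mu,-\rangle$ against those of $\langle\lambda,-\rangle$.
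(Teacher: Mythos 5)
Your proof is correct and follows essentially the same route as the paper: choose a common maximal torus, run the sign analysis of $t\langle\lambda,\chi\rangle+\langle\mu,\chi\rangle$ over the finitely many weights for (i), deduce (ii) from the $G$-equivariant embedding of $G$ (with the conjugation action) into a rational $G$-module using $P_\nu=W_{\nu,\ge0}\cap G$ and $L_\nu=W_{\nu,0}\cap G$ (this is exactly the paper's alternative argument for that part, with $t$ enlarged to cover the weights of $W$), and prove the final limit claim by a weight/support computation. The only caveat is that your root-subgroup description of $P_\nu$ and $L_\nu$ is only adequate for connected $G$ (here $G$ may be disconnected, and $L_\nu$ can meet other components), so in (ii) you should rest the argument on the module-embedding characterization you also state rather than on the roots alone; the unipotent-radical inclusion is then either the standard fact for nested (R-)parabolics or your root computation applied inside $G^0$.
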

\begin{proof}
Choose a maximal torus $T$ of $G$ such that $\lambda,\mu\in Y(T)$.
Let $\Phi$ be the set of weights of $T$ on $V$.
Choose $t\in {\mathbb N}$ large enough such that
for any $\chi\in\Phi$ with $\langle \lambda,\chi\rangle\neq 0$,
we have that $\langle t\lambda+\mu,\chi\rangle$ is nonzero and
has the same sign as $\langle \lambda,\chi\rangle$. Then (i) follows.
Part (ii) follows from the argument of the proof of \cite[Prop.~6.7]{martin1} (increasing $t$ if necessary).
Alternatively, it can be deduced from part (i)
by embedding $G$ with the conjugation action
$G$-equivariantly in a
rational $G$-module $W$ and observing that
$P_\nu=W_{\nu,\ge0}\cap G$ and $L_\nu=W_{\nu,0}\cap G$.

Now assume that $t\in {\mathbb N}$ is such that (i) holds
and let $v\in V$ be such that the limits $v'$ and $v''$ above exist.
Since (i) holds, we have for all $\chi\in\Phi$ that
$\langle t\lambda+\mu,\chi\rangle=0$ if and only if
$\langle \lambda,\chi\rangle=0$ and $\langle\mu,\chi\rangle=0$.
For $\nu\in Y(T)$, let $\Phi_{\nu,\geq 0}$ and $\Phi_{\nu,0}$
be the sets of weights $\chi\in\Phi$ such that
$\langle \nu,\chi\rangle\ge0$ and $\langle \nu,\chi\rangle=0$, respectively.
Then ${\rm supp}_T(v)\subseteq \Phi_{\lambda,\geq 0}$ and
${\rm supp}_T(v)\cap \Phi_{\lambda,0}\subseteq \Phi_{\mu,\geq 0}$.
It follows that $\lim_{a\ra 0} (t\lambda+ \mu)(a)\cdot v$ exists
and equals $v''$.
\end{proof}

We finish the section with a result that lets us pass from $k$-points
to arbitrary points. Let $V$ be a $k$-defined
rational $G$-module and let $k_1/k$ be a field extension.
Let $v\in V(k_1)$ and let $\lambda\in Y_k(G)$.
Pick a basis $(\alpha_i)_{i\in I}$ for $k_1$
over $k$; then we can write
$v = \sum_{i \in J} \alpha_iv_i$ for some finite subset $J$ of $I$
and certain (unique) $v_i \in V(k)$.
Clearly, we may assume that $J=\{1,\ldots,n\}$ for some $n\in {\mathbb N}$.
Set $\mathbf{v} = (v_1,\ldots,v_n) \in V^n$ and let
$G$ act diagonally on $V^n$.

\begin{lem}
\label{lem:notrat}
With the notation as above, the following hold:
\begin{enumerate}[{\rm(i)}]
\item $\lim_{a\ra 0} \lambda(a)\cdot v$ exists if and only if $\lim_{a\ra 0} \lambda(a)\cdot {\mathbf v}$ exists if and only if $\lim_{a\ra 0} \lambda(a)\cdot v_i$ exists for each $i$.
\item Suppose the limits in (i) exist. Then for any $g\in G(k)$, we have $v'= g\cdot v$ if and only if ${\mathbf v}'= g\cdot {\mathbf v}$ if and only if $v_i'= g\cdot v_i$ for each $i$.
\end{enumerate}
\end{lem}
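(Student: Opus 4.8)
The plan is to reduce everything to linear algebra over the field $k$, using the fact that $V\otimes_k k_1$ decomposes as a direct sum of copies of $V(k)$ indexed by the chosen basis $(\alpha_i)$. First I would note that, since $\lambda\in Y_k(G)$, the weight-space decomposition of $V$ with respect to $\lambda$ (in the sense of Remark~\ref{rem:linear}) is $k$-defined: each $V_{\lambda,n}$ is spanned by elements of $V(k)$, so for $w\in V(k_1)$ written as $w=\sum_{i\in J}\alpha_i w_i$ with $w_i\in V(k)$, the component of $w$ in $V_{\lambda,n}\otimes_k k_1$ is $\sum_{i\in J}\alpha_i (w_i)_n$, where $(w_i)_n$ denotes the $V_{\lambda,n}$-component of $w_i$. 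Because the $\alpha_i$ are linearly independent over $k$, the component of $w$ in weight $n$ vanishes if and only if $(w_i)_n=0$ for every $i$. Applying the criterion from Remark~\ref{rem:linear} --- that $\lim_{a\ra 0}\lambda(a)\cdot w$ exists precisely when $w\in V_{\lambda,\ge 0}$, i.e.\ when all weights occurring in $w$ are $\ge 0$ --- this immediately gives: $v\in V_{\lambda,\ge 0}\otimes_k k_1$ iff each $v_i\in V_{\lambda,\ge 0}$ iff $\mathbf{v}\in (V_{\lambda,\ge 0})^n$ (the last equivalence being the definition of the diagonal action). This proves (i), and it also identifies the limits: the limit map is the projection onto $V_{\lambda,0}$ with kernel $V_{\lambda,>0}$, so $v'=\sum_{i\in J}\alpha_i v_i'$, and likewise $\mathbf{v}'=(v_1',\dots,v_n')$.

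For (ii), suppose the limits exist. Fix $g\in G(k)$. Since $g$ acts $k$-linearly on $V$ (as the action is $k$-defined and $g$ is a $k$-point), we have $g\cdot v=\sum_{i\in J}\alpha_i(g\cdot v_i)$ with $g\cdot v_i\in V(k)$, and similarly $g\cdot\mathbf v=(g\cdot v_1,\dots,g\cdot v_n)$. Now $v'=g\cdot v$ in $V(k_1)=V(k)\otimes_k k_1$ holds iff, comparing coefficients of the $k$-basis $(\alpha_i)$, we have $v_i'=g\cdot v_i$ in $V(k)$ for every $i$; and this is by definition the same as $\mathbf v'=g\cdot\mathbf v$ in $V^n$. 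This is the whole content of (ii).

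The only genuinely delicate point --- the ``main obstacle'' --- is bookkeeping the interaction of the field decomposition with the weight decomposition, and in particular making sure one may choose the $v_i$ compatibly with the weight spaces; but this is automatic once one observes that each $V_{\lambda,n}$ is $k$-defined, so that projecting $v$ onto $V_{\lambda,n}\otimes_k k_1$ commutes with the expansion $v=\sum\alpha_i v_i$. Everything else is linear independence of the $\alpha_i$ over $k$ together with the explicit description of limits as projections recalled in Remark~\ref{rem:linear}. Note the statement is phrased for a $k$-defined rational $G$-module, so no passage to an embedding $V\hookrightarrow V_0$ is needed here; this is exactly the reduced setting in which the argument is cleanest.
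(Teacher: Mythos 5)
Your argument is correct and follows essentially the same route as the paper: the key point in both is that, because $\lambda\in Y_k(G)$ and $V$ is $k$-defined, the subspace $V_{\lambda,\ge 0}$ (equivalently each weight space $V_{\lambda,n}$) is $k$-defined, so membership of $v=\sum_{i}\alpha_i v_i$ in $V_{\lambda,\ge 0}$ is equivalent to membership of each $v_i$, which yields (i), while (ii) is the coefficient comparison over the $k$-basis $(\alpha_i)$ that the paper dismisses as obvious. Your write-up just makes the weight-space bookkeeping and part (ii) more explicit; no new idea or gap.
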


\begin{proof}
Part (ii) is obvious.  In part (i), it follows easily from the definitions of limit and direct product that the second limit exists if and only if the third limit exists. Since $\lambda$ is $k$-defined, $V_{\lambda,\ge 0}$ is $k$-defined, so $V_{\lambda,\ge 0}=\bigoplus_{i\in I}\alpha_i(V_{\lambda,\ge 0}\cap V(k))$. So $v\in V_{\lambda,\ge 0}$ if and only if $v_i\in V_{\lambda,\ge 0}$ for all $i\in J$. Hence the first limit exists if and only if the third limit exists.  This completes the proof.
\end{proof}

\section{Orbits and rationality}
\label{sec:orbrat}

In this section we prove some results about $G(k)$-orbits as indicated in the
Introduction.  We maintain the notation from the previous sections; recall in particular that $\Gamma = \Gal(k_s/k)$.

Suppose $V$ is a $k$-defined affine $G$-variety. Even when one is interested mainly in rationality questions, one must sometimes consider points $v\in V$ that are not $k$-points.  For instance, we often want to prove results about a $k$-defined subgroup $H$ of $G$ by choosing a generating tuple $\tuple{h}= (h_1,\ldots, h_n)\in H^n$ for some $n\in {\mathbb N}$, but $H$ need not admit such a tuple with the $h_i$ all being $k$-points (for example when $k$ is finite and $H$ is infinite).
Fortunately, the weaker property that $C_{G(k_s)}(v)$ is $\Gamma$-stable will often suffice (see Theorem \ref{thm:sepRuconj}, for example). We do not require $C_G(v)$ to be $k$-defined here; note that even when $v$ is a $k$-point, $C_G(v)$ is $k$-closed but need not be $k$-defined.  Some of our results hold without any rationality assumptions on $v$ at all (see Theorems \ref{thm:Ruconj} and \ref{thm:cocharclosedcrit}).

\begin{thm}
\label{thm:sepRuconj}
Suppose $V$ is a $k$-defined affine $G$-variety.
Let $v \in V$ and let $\lambda \in Y_k(G)$ be such that
$v':=\lim_{a\to 0}\lambda(a)\cdot v$ exists.
If $v'$ is $R_u(P_\lambda)(k_s)$-conjugate to $v$ and $C_{G(k_s)}(v)$ is $\Gamma$-stable,
then $v'$ is $R_u(P_\lambda)(k)$-conjugate to $v$.
\end{thm}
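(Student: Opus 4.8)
The plan is to reduce to a statement about the variety of $\lambda$-conjugates of $v$ that land $v'$ on target and then apply the Galois criterion for $k$-definedness. First I would invoke Remark~\ref{rem:linear} to replace $V$ by a $k$-defined rational $G$-module, so that we may work with the subspace $V_{\lambda,\geq 0}$, the projection onto $V_{\lambda,0}$, and the formula \eqref{eqn:repborel2}. By hypothesis there is some $u\in R_u(P_\lambda)(k_s)$ with $u\cdot v = v'$; by Lemma~\ref{lem:Ruconj}, this is equivalent to $u^{-1}\cdot\lambda$ centralizing $v$, i.e.\ to $u^{-1}\lambda(a)u$ fixing $v$ for all $a$. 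The key object to introduce is the set
\[
  Z := \{\, w \in R_u(P_\lambda) \mid w\cdot v = v' \,\},
\]
equivalently $Z = \{\, w \in R_u(P_\lambda) \mid w^{-1}\cdot\lambda \text{ centralizes } v \,\}$. I would show $Z$ is a coset of $C_{R_u(P_\lambda)}(v) \cap C_{R_u(P_\lambda)}(\lambda(\ovl k^*))$ inside $R_u(P_\lambda)$: if $w_1, w_2 \in Z$ then $w_1^{-1}w_2$ fixes $v$, and moreover $w_1 w_2^{-1}$ centralizes $\lambda(\ovl k^*)$ because both $w_i^{-1}\cdot\lambda$ equal the same cocharacter (both send $a \mapsto w_i^{-1}\lambda(a)w_i$ and both centralize $v$, hence fix $v'$, and one checks the cocharacters coincide on the relevant weight spaces). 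Thus $Z$ is a coset of a closed subgroup, hence a closed (nonempty) subvariety of $R_u(P_\lambda)$, which is itself $k$-defined by Lemma~\ref{lem:Rparrat}(i).

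The heart of the argument is to show $Z$ is $\Gamma$-stable (as a subset of $G(k_s)$), after which it will follow that $Z$ is $k$-defined and hence $Z(k) = Z \cap G(k_s)$ is dense in $Z$, so in particular nonempty, giving the required $u \in R_u(P_\lambda)(k)$. To see $\Gamma$-stability, take $w \in Z \cap G(k_s)$ and $\gamma \in \Gamma$. Since $\lambda \in Y_k(G)$ and $v' \in V(k_s)$ with $\gamma\cdot v' = v'$ would require $v' \in V(k)$ — which we do not have — I instead argue at the level of centralizers: $w \in Z$ means $w^{-1}\cdot\lambda \in Y_{k_s}(C_G(v))$, and applying $\gamma$ gives $\gamma\cdot(w^{-1}\cdot\lambda) = (\gamma w)^{-1}\cdot\lambda$ (using $\gamma\cdot\lambda = \lambda$), which centralizes $\gamma\cdot v$. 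The hypothesis that $C_{G(k_s)}(v)$ is $\Gamma$-stable is exactly what is needed to bridge the gap: it is not literally that $\gamma\cdot v = v$, but that $\gamma$ permutes the elements centralizing $v$; combined with $\lambda$ being $\Gamma$-fixed, a cocharacter centralizing $\gamma\cdot v$ is carried back by $\gamma^{-1}$ to one centralizing $v$. I would make this precise by noting that $w^{-1}\cdot\lambda$ and $(\gamma w)^{-1}\cdot\lambda$ are both cocharacters of $G$ lying in the same $R_u(P_\lambda)$-orbit of $\lambda$ whose associated limit of $v$ gives, respectively, $v$ itself — the point being that $x^{-1}\cdot\lambda$ centralizes $v$ iff $x \cdot v = \lim_{a\to0}\lambda(a)\cdot(x\cdot v)$, and $\Gamma$-stability of $C_{G(k_s)}(v)$ guarantees $\gamma$ preserves this condition. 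Hence $\gamma w \in Z$.

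The main obstacle I anticipate is precisely the $\Gamma$-stability step, because $v$ itself need not be a separable point (indeed need not be defined over $k_s$ at all), so one cannot simply say "$\gamma$ fixes $v$". Everything must be phrased in terms of the $\Gamma$-action on $C_{G(k_s)}(v)$ and the interaction with Lemma~\ref{lem:Ruconj}. A clean way to handle this, avoiding any direct manipulation of $v$, is to use Lemma~\ref{lem:notrat}: write $v$ over a basis of the field generated by its coordinates, pass to a tuple $\mathbf v \in V^n$ with $V^n$ a $k$-defined $G$-module and $\mathbf v$ arranged so that $C_{G(k_s)}(\mathbf v)$ is $\Gamma$-stable iff $C_{G(k_s)}(v)$ is — but in fact the cleanest route is to observe that $\Gamma$-stability of $C_{G(k_s)}(v)$ forces the closed subvariety $\ovl{G\cdot v} \cap (\text{stuff})$-type objects to be $\Gamma$-stable; I would ultimately present the coset $Z$ directly and verify $\gamma(Z) = Z$ using only that $\gamma$ fixes $\lambda$ pointwise and stabilizes $C_{G(k_s)}(v)$, then invoke \cite[Thm.~AG.14.4]{Bo} to conclude $Z$ is $k$-defined and extract a $k$-point. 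Once that is in place the theorem follows immediately.
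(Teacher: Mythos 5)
Your reduction of the problem to the set $Z=\{w\in R_u(P_\lambda)\mid w\cdot v=v'\}$ and your $\Gamma$-stability argument for $Z\cap G(k_s)$ (using only that $\lambda$ is $\Gamma$-fixed and that $C_{G(k_s)}(v)$ is $\Gamma$-stable) are fine in spirit, but the final step contains the real gap: from ``$Z$ is $\Gamma$-stable, hence $k$-defined'' you cannot conclude that $Z$ has a $k$-point. First, $Z(k)$ is not $Z\cap G(k_s)$ (the latter is $Z(k_s)$), and a $k$-defined variety need not have any $k$-points at all, let alone a dense set of them. Here $Z$ is a coset $u\,C_{R_u(P_\lambda)}(v)$, i.e.\ a torsor under $C_{R_u(P_\lambda)}(v)$, and extracting a $\Gamma$-fixed (hence $k$-rational) point from a $\Gamma$-stable coset is exactly a Galois-cohomology vanishing problem; over an arbitrary (possibly imperfect) field, and with $v$ an arbitrary point so that $C_G(v)$ need not even be $k$-defined or have dense separable points, there is no general vanishing to invoke. (The missing density of $Z(k_s)$ in $Z$ also blocks your appeal to the Galois criterion \cite[Thm.~AG.14.4]{Bo}.) In other words, the existence of a $k$-point of $Z$ \emph{is} the content of the theorem, and your argument assumes it at the last step. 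There is also a smaller error: your claimed description of $Z$ as a coset of $C_{R_u(P_\lambda)}(v)\cap C_{R_u(P_\lambda)}(\lambda(\ovl{k}^*))$ cannot be right, since $R_u(P_\lambda)\cap L_\lambda=\{1\}$, so that subgroup is trivial and $Z$ would be a single point; the cocharacters $w_1^{-1}\cdot\lambda$ and $w_2^{-1}\cdot\lambda$ both centralize $v$ but need not coincide. The correct statement is simply $Z=u\,C_{R_u(P_\lambda)}(v)$.

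The paper circumvents the cohomological issue by producing an explicit $k$-point. Setting $\mu=u^{-1}\cdot\lambda$ (which centralizes $v$ by Lemma~\ref{lem:Ruconj}), it uses the $\Gamma$-stability of $C_{G(k_s)}(v)$ to show that the subgroup $H$ generated by the $\Gamma$-conjugates of $\mu(\ovl{k}^*)$ is a $k$-defined connected subgroup of $C_{P_\lambda}(v)$; taking a $k$-defined maximal torus of $H$, then of its centralizer in $P_\lambda$, one gets a conjugate cocharacter $\mu'$ centralizing $v$ whose R-Levi subgroup $L_{\mu'}$ of $P_\lambda$ is $k$-defined. The rationality statement that does the heavy lifting is Lemma~\ref{lem:Rparrat}(iii): $R_u(P_\lambda)(k)$ acts simply transitively on the $k$-defined R-Levi subgroups of $P_\lambda$, which yields $u_0\in R_u(P_\lambda)(k)$ with $\mu'=u_0^{-1}\cdot\lambda$, and then Lemma~\ref{lem:Ruconj} gives $v'=u_0\cdot v$. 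If you want to salvage your approach, you would need to replace the ``$k$-defined implies $k$-points'' step by an argument of this kind (or by an explicit vanishing result for the relevant torsor), which is precisely where the structure theory of $k$-defined parabolic and Levi subgroups enters.
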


\begin{proof}
Since $C_{G^0(k_s)}(v)=C_{G(k_s)}(v)\cap G^0(k_s)$, we may assume that $G$ is connected.
Set $P = P_\lambda$.
By hypothesis, there exists $u \in R_u(P)(k_s)$ such that $v' = u \cdot v$.
By Lemma~\ref{lem:Ruconj}, $\mu := u\inverse\cdot\lambda \in Y_{k_s}(G)$
centralizes $v$,
so $\mu(\ovl{k}^*) \subseteq C_P(v)$,
and $\mu(k_s^*) \subseteq C_{G(k_s)}(v) \cap P$.
Note that since $u \in P$, we have $P_\mu = P$.
Let $H$ be the subgroup of $G$ generated by the $\Gamma$-conjugates of
$\mu(\ovl{k}^*)$;
then the union of the $\Gamma$-conjugates of $\mu(k_s^*)$ is dense in $H$, so $H$ is closed, connected and $k_s$-defined,
by \cite[AG.14.5, I.2.2]{Bo},
and $H \subseteq P$, since $\mu(\ovl{k}^*) \subseteq P$
and $P$ is $\Gamma$-stable.
Moreover, since $C_{G(k_s)}(v)$ is $\Gamma$-stable, we can conclude that $H \subseteq C_P(v)$.
Since $H$ has a $\Gamma$-stable dense set of separable points, $H$ is $k$-defined,
and hence contains a $k$-defined maximal torus $S$.
There exists $h \in H$ such that $\mu' := h \cdot\mu$ belongs to $Y(S)$;
note that $\mu'$ centralizes $v$, and since $h \in P$,
we deduce that $P_{\mu'} = P$. In case $k$ is perfect, $C_P(v)$ is $k$-defined, since $C_P(v)=C_G(v)\cap P$ is $\Gamma$-stable.
So in this case we could simply have taken $S$ to be a $k$-defined maximal torus of $C_P(v)$.

By \cite[Cor.~III.9.2]{Bo}, $C_P(S)$ is $k$-defined,
so it has a $k$-defined maximal torus $T$.
Note that $S\subseteq T$, since $S$ commutes
with $T$ and $T$ is maximal.
There exists a unique $k$-defined Levi subgroup $L$ of $P$ containing $T$, by Lemma \ref{lem:Rparrat}(iii).
But $L_{\mu'}$ is an Levi subgroup of $P$ containing $T$, so $L_{\mu'} = L$.
Thus we have two Levi subgroups $L_\lambda$ and $L_{\mu'}$ of $P$,
both $k$-defined. By Lemma \ref{lem:Rparrat}(iii),
there exists a unique $u_0 \in R_u(P)(k)$ such that
$L_\lambda = u_0 L_{\mu'} u_0\inverse$.
We also have $\mu' = hu\inverse \cdot \lambda$, and since $hu\inverse \in P$,
we can write $hu\inverse = u_1l$ with $u_1 \in R_u(P)$ and $l \in L_\lambda$.
But $L_\lambda$ centralizes $\lambda$,
so $\mu' = u_1l\cdot\lambda = u_1\cdot\lambda$.
So $u_0\inverse L_\lambda u_0 = L_{\mu'}=L_{u_1\cdot\lambda}
= u_1L_\lambda u_1\inverse.$
Since $R_u(P)$ acts simply transitively on the set of Levi subgroups of $P$,
we must have $u_1 = u_0\inverse$, and hence $\mu' = u_0\inverse\cdot\lambda$.
Applying Lemma \ref{lem:Ruconj} again, we see that
$v' = u_0\cdot v$, because $\mu'$ centralizes $v$.
This proves the theorem.
\end{proof}

\begin{exmp}
\label{exmp:wkdefnec}
 The assumption that $C_{G(k_s)}(v)$ is $\Gamma$-stable in Theorem \ref{thm:sepRuconj}
is necessary.  For instance, let $G= \SL_2$ act on $V=G$ by conjugation.
Choose $y\in k_s\setminus k$ and $x\in k^* \setminus \{\pm 1\}$.
Let $v'= \twobytwo{x}{0}{0}{x^{-1}}$ and
$v= \twobytwo{1}{y}{0}{1} \twobytwo{x}{0}{0}{x^{-1}} \twobytwo{1}{-y}{0}{1}$,
and define $\lambda\in Y_k(G)$ by $\lambda(a)= \twobytwo{a}{0}{0}{a^{-1}}$.
It is easily seen that $v'= \lim_{a\ra 0} \lambda(a)\cdot v$ and that $v'$
is $R_u(P_\lambda)(k_s)$-conjugate to $v$ but not
$R_u(P_\lambda)(k)$-conjugate to $v$.
\end{exmp}

We can now state our first main result.

\begin{thm}
\label{thm:Ruconj}
Suppose $k$ is perfect.
Suppose $V$ is a $k$-defined affine $G$-variety and let $v\in V$.
Let $\lambda\in Y_k(G)$ such
that $v':=\underset{a\to 0}{\lim} \lambda(a)\cdot v$
exists and is $G(k)$-conjugate to $v$.
Then $v'$ is $R_u(P_\lambda)(k)$-conjugate to $v$.
\end{thm}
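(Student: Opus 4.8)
The plan is to reduce Theorem~\ref{thm:Ruconj} to Theorem~\ref{thm:sepRuconj} by producing, from the hypothesis that $v'$ is merely $G(k)$-conjugate to $v$, the two ingredients needed there: that $v'$ is $R_u(P_\lambda)(k_s)$-conjugate to $v$, and that $C_{G(k_s)}(v)$ is $\Gamma$-stable. Since $k$ is perfect, $k_s = \ovl{k}$, so the target conjugacy is over $\ovl{k}$ and $\Gamma$-stability just means stability under $\Gamma = \Gal(\ovl{k}/k)$.

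First I would deal with the $\Gamma$-stability of the stabilizer. The subtlety (flagged in Example~\ref{exmp:wkdefnec}) is that $v$ need not be a $k$-point, so one cannot simply say $C_G(v)$ is $k$-defined. However, by hypothesis there is some $g \in G(k)$ with $v' = g \cdot v$; combined with $v' = \lim_{a\to 0}\lambda(a)\cdot v$, this says $g$ carries $v$ to a $\lambda(\ovl{k}^*)$-fixed point. The idea is to set $w := v'$: then $w$ is $\lambda(\ovl k^*)$-fixed, $w \in V(k)$ (since $\lambda \in Y_k(G)$, $v$ lies in $V_{\lambda,\ge 0}$ and the limit of a $k$-defined situation stays rational — but here $v$ itself isn't assumed rational, so instead I would argue as follows). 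Actually the cleanest route: apply the machinery of Lemma~\ref{lem:notrat}. Write $v = \sum_{i=1}^n \alpha_i v_i$ with $v_i \in V(k)$ relative to a $k$-basis $(\alpha_i)$ of the field generated by the coordinates of $v$, form $\tuple{v} = (v_1,\ldots,v_n) \in V^n$ with the diagonal $G$-action, and note $V^n$ is a $k$-defined affine $G$-variety. By Lemma~\ref{lem:notrat}, $\lim_{a\to 0}\lambda(a)\cdot \tuple{v}$ exists and $g \cdot \tuple{v} = \tuple{v}'$ for the same $g \in G(k)$; moreover $C_{G}(v) = C_G(\tuple{v})$ as subgroups of $G$. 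Since $\tuple{v}$ is a tuple of $k$-points, $C_G(\tuple{v})$ is $k$-closed, hence $\Gamma$-stable (its set of $\ovl k = k_s$-points is cut out by $k$-defined equations and so is permuted by $\Gamma$). Thus $C_{G(k_s)}(v)$ is $\Gamma$-stable, and it suffices to prove the theorem for $\tuple{v}$ in $V^n$ — so we may as well assume from the start that $v$ is such that $C_{G(k_s)}(v)$ is $\Gamma$-stable.

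Next I would upgrade the $G(k)$-conjugacy to $R_u(P_\lambda)(\ovl k)$-conjugacy. Here I would invoke the algebraically closed case: $v' = \lim_{a\to 0}\lambda(a)\cdot v$ and $v'$ is $G(\ovl k)$-conjugate to $v$ (a fortiori, since $g \in G(k)$), so by the classical Kempf-type result — or more elementarily, by an argument with $P_\lambda = L_\lambda \ltimes R_u(P_\lambda)$ and Eqn.~\eqref{eqn:repborel2} — one shows $v'$ is in fact $R_u(P_\lambda)(\ovl k)\cdot v$. Concretely: $g \cdot v = v'$ is $\lambda$-fixed; pick a maximal torus and use that the orbit map, restricted appropriately, lets one apply Lemma~\ref{lem:pconjuconj} (take $x \in P_{-\lambda}$ suitably and $u \in R_u(P_\lambda)$ — one needs $g$ to be expressible so that a conjugate of $\lambda$ centralizes $v$, then Lemma~\ref{lem:Ruconj} finishes). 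In fact the slick approach is: since $\ovl k$ is algebraically closed and the full orbit statement over $\ovl k$ is the well-known special case (or can be extracted from Lemma~\ref{lem:pconjuconj} applied over $\ovl k$ together with the Bruhat-type decomposition $G = \bigcup P_{-\lambda} w R_u(P_\lambda)$ relative to $L_\lambda$), we get $u \in R_u(P_\lambda)(\ovl k) = R_u(P_\lambda)(k_s)$ with $v' = u \cdot v$.

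With both hypotheses of Theorem~\ref{thm:sepRuconj} in hand — $v'$ is $R_u(P_\lambda)(k_s)$-conjugate to $v$, and $C_{G(k_s)}(v)$ is $\Gamma$-stable — that theorem immediately yields $u_0 \in R_u(P_\lambda)(k)$ with $v' = u_0 \cdot v$, which is exactly the assertion. The main obstacle, and the step needing the most care, is the reduction via Lemma~\ref{lem:notrat} to the case where the stabilizer is $\Gamma$-stable: one must check that replacing $v$ by the tuple $\tuple{v}$ does not change $P_\lambda$ (it does not, as $P_\lambda$ depends only on $G$ and $\lambda$), that it does not change $C_G(v)$, and that $R_u(P_\lambda)(k)$-conjugacy of $\tuple{v}$ to $\tuple{v}'$ pulls back to $R_u(P_\lambda)(k)$-conjugacy of $v$ to $v'$ — all of which are handled by Lemma~\ref{lem:notrat}(ii). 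The second mildly delicate point is making the algebraically-closed-field upgrade to $R_u(P_\lambda)$-conjugacy fully rigorous in our general (possibly disconnected) setting, but the reduction $G \to G^0$ at the start of the proof of Theorem~\ref{thm:sepRuconj} already shows how to handle disconnectedness, and the connected case is standard.
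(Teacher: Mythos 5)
Your overall architecture coincides with the paper's: reduce to a tuple of $k$-points via Lemma~\ref{lem:notrat}, establish $R_u(P_\lambda)$-conjugacy over $\ovl{k}$, then descend to $R_u(P_\lambda)(k)$-conjugacy by Theorem~\ref{thm:sepRuconj}. However, there is a genuine gap at the middle step: you treat the algebraically closed case as ``the classical Kempf-type result'' / ``the well-known special case'', but it is not known prior to this paper in the generality needed (Remark~\ref{rem:Ruconj}: previously only for $\Char k=0$ and $V=G/H$ a homogeneous space, by Kraft--Kuttler). It is in fact the heart of the proof, and your sketches do not supply it. The route via Lemma~\ref{lem:pconjuconj} requires writing $v'=xu\cdot v$ with $x\in P_{-\lambda}$ and $u\in R_u(P_\lambda)$, i.e.\ it requires the conjugating element $g$ to lie in the ``big cell'' $P_{-\lambda}R_u(P_\lambda)$; a general $g$ does not, and that is exactly the difficulty. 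Your suggested Bruhat-type decomposition $g=u'\dot{w}lu$ only feeds into Lemma~\ref{lem:pconjuconj} when $\dot w$ behaves well with respect to $\lambda$ (Case~1 of the proof of Theorem~\ref{thm:cocharclosedcrit}); the bad case is handled there by an induction on $\dim V_{\lambda,0}$ using an auxiliary cocharacter $t\lambda+n\cdot\lambda$, and that induction relies on the cocharacter-closedness hypothesis, which is not available under the mere assumption that $v'\in G\cdot v$ for the one given $\lambda$. The paper's actual argument over $\ovl{k}$ is different and genuinely geometric: the orbit map $\varphi\colon G\to G\cdot v'$ is open, $R_u(P_\lambda)P_{-\lambda}^0$ contains the big cell $BB^-$ and hence $\varphi(R_u(P_\lambda)P_{-\lambda}^0)$ is a neighbourhood of $v'$ in $G\cdot v'$, the limit curve $\widehat\phi_{v,\lambda}$ eventually enters this neighbourhood so some $v_a=\lambda(a)\cdot v$ lies in $R_u(P_\lambda)P_{-\lambda}^0\cdot v'$, and $T$-stability of $R_u(P_\lambda)P_{-\lambda}^0$ under left multiplication transfers this from $v_a$ to $v$ itself; only then do Lemmas~\ref{lem:Rparrat} and \ref{lem:pconjuconj} apply. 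None of these ingredients appear in your proposal, so the key step remains unproved.

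Two smaller points. Your claim that $C_G(v)=C_G(\tuple v)$ is false in general (a $g$ fixing $\sum_i\alpha_i v_i$ need not fix each $v_i$, since the $\alpha_i$ are only $k$-linearly independent); fortunately it is not needed: since $\tuple v$ is a $k$-point of the $k$-defined $G$-variety $V^n$, the stabilizer $C_{G(k_s)}(\tuple v)$ is automatically $\Gamma$-stable, and Lemma~\ref{lem:notrat}(ii) pulls the $R_u(P_\lambda)(k)$-conjugacy back from $\tuple v$ to $v$, which is all the reduction requires (and is exactly how the paper argues). Also, before speaking of ``coordinates of $v$'' and invoking Lemma~\ref{lem:notrat} you must first replace $V$ by a $k$-defined rational $G$-module via a $k$-defined $G$-equivariant embedding (Remark~\ref{rem:linear}); this is routine but should be said.
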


\begin{proof}
Fix a maximal torus $T$ of $P_\lambda$
such that $\lambda\in Y(T)$ and
a Borel subgroup $B$ of $P_\lambda^0$ such
that $T\subseteq B$.  Let $B^-$ be the
Borel subgroup of $G$ opposite to $B$ with
respect to $T$; note that $B^-\subseteq P_{-\lambda}$.

We begin with the case that $k$ is algebraically closed.
We can assume that $v\neq v'$.
For $a\in k^*$, set $v_a= \lambda(a)\cdot v$;
then $v_a\ne v'$ for all $a\in k^*$.
We show that $v\in R_u(P_\lambda)P_{-\lambda}^0 \cdot v'$.
Let $\varphi:G\to G\cdot v'$ be the orbit map of $v'$.
Then $\varphi$ is open, by \cite[AG Cor.~18.4]{Bo}.
The set $R_u(P_\lambda)P_{-\lambda} ^0$
contains the big cell $BB^-\subseteq G^0$, which
is an open neighbourhood of $1$ in $G$,
so $\varphi(R_u(P_\lambda)P_{-\lambda}^0)$ contains
an open neighbourhood of $\varphi(1)=v'$ in $G\cdot v'$.
The image of $k^*$ under the limit morphism
$\widehat\phi_{v,\lambda}:k \to V$ meets
this neighbourhood,
so there exists $a\in k^*$ such that $v_a\in R_u(P_\lambda)P_{-\lambda}^0 \cdot v'$.
But $R_u(P_\lambda)$ is normal
in $P_\lambda$ and hence $R_u(P_\lambda)P_{-\lambda}^0 $ is stable under left multiplication by elements of $T$,
so we in fact have $v\in R_u(P_\lambda)P_{-\lambda}^0 \cdot v'$.
Lemmas \ref{lem:Rparrat}
and \ref{lem:pconjuconj} now imply that
$v'= u\cdot v$ for some $u\in R_u(P_\lambda)$.
This completes the proof when $k$ is algebraically closed.

Now assume $k$ is perfect. First assume $v\in V(k)$.  By the algebraically closed case, we know that $v$ and $v'$ are
$R_u(P_\lambda)$-conjugate. Since $C_{G(k_s)}(v)$ is $\Gamma$-stable, we can apply Theorem \ref{thm:sepRuconj} to deduce that $v$ and $v'$ are $R_u(P_\lambda)(k)$-conjugate.  Now let $v$ be arbitrary.
There is no loss in assuming that $V$ is a $k$-defined rational $G$-module
(cf.\ Remark \ref{rem:linear}).
Let ${\mathbf v}, {\mathbf v}'\in V^n$ be as in Lemma \ref{lem:notrat}.  Then $\lim_{a\ra 0}\lambda(a)\cdot {\mathbf v}= {\mathbf v}'$ and ${\mathbf v}$ and ${\mathbf v}'$ are $G(k)$-conjugate, so they are $R_u(P_\lambda)(k)$-conjugate by the argument above.  Lemma~\ref{lem:notrat}(ii)
implies that $v$ and $v'$ are $R_u(P_\lambda)(k)$-conjugate, as required.
\end{proof}

\begin{rem}
\label{rem:Ruconj}
Theorem \ref{thm:Ruconj} was first proved by H.~Kraft and J.~Kuttler
for $k$ algebraically closed of characteristic zero
in case $V = G/H$ is an affine homogeneous space
(by a method different from ours), cf.\
\cite[Prop.\ 2.1.4]{schmitt} or \cite[Prop.\ 2.1.2]{Gomez}.
We do not know whether this theorem holds for arbitrary $k$.
\end{rem}

The following consequence of Theorem \ref{thm:Ruconj} is used in the proof of \cite[Prop.~3.34]{BMRT:relative}.

\begin{cor}
\label{cor:doubleorbit}
Let $G_1$ and $G_2$ be reductive groups
and let $V$ be an affine $(G_1\times G_2)$-variety.  Let $v\in V$ and $\lambda_1\in Y(G_1)$
and assume that $v':=\lim_{a\ra 0}\lambda_1(a)\cdot v$ exists.
Then the following hold:
\begin{enumerate}[{\rm (i)}]
\item If $v'$ is $(G_1\times G_2)$-conjugate to $v$, then it is $G_1$-conjugate to $v$.
In particular, $G_1\cdot v$ is closed if $(G_1\times G_2)\cdot v$ is.
\item Let $\pi:V\to V/\!\!/G_2$ be the canonical projection and assume that
$\pi^{-1}(\pi(v))=G_2\cdot v$. If $\pi(v')$ is
$G_1$-conjugate to $\pi(v)$, then $v'$ is $G_1$-conjugate to $v$.
\end{enumerate}
\end{cor}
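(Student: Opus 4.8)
The plan is to deduce both parts directly from Theorem \ref{thm:Ruconj}, working over an algebraically closed field (so $k = \ovl{k}$ is certainly perfect, and we may ignore rationality issues throughout). The key observation is that the cocharacter $\lambda_1 \in Y(G_1)$, viewed inside $G_1 \times G_2$ as $\lambda = (\lambda_1, 0)$, has associated R-parabolic $P_\lambda = P_{\lambda_1} \times G_2$ and unipotent radical $R_u(P_\lambda) = R_u(P_{\lambda_1}) \times \{1\} \subseteq G_1 \times \{1\}$. This is the whole point: the unipotent radical of the relevant parabolic lies entirely in the $G_1$-factor, so once Theorem \ref{thm:Ruconj} tells us that $v'$ is $R_u(P_\lambda)$-conjugate to $v$, the conjugating element automatically lies in $G_1$.

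For part (i): since $v' = \lim_{a\to 0}\lambda(a)\cdot v$ with $\lambda = (\lambda_1, 0)$, and $v'$ is by hypothesis $(G_1 \times G_2)$-conjugate to $v$, Theorem \ref{thm:Ruconj} (applied with $k = \ovl{k}$ perfect) gives that $v'$ is $R_u(P_\lambda)(\ovl{k})$-conjugate to $v$, hence $G_1$-conjugate to $v$ by the remark above. For the "in particular" clause, suppose $(G_1 \times G_2)\cdot v$ is closed but $G_1 \cdot v$ is not; then by the Hilbert--Mumford theorem there is some $\lambda_1 \in Y(G_1)$ with $v' := \lim_{a\to 0}\lambda_1(a)\cdot v$ existing and lying outside $G_1\cdot v$. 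But $v' \in \ovl{(G_1\times G_2)\cdot v} = (G_1\times G_2)\cdot v$, so $v'$ is $(G_1\times G_2)$-conjugate to $v$, and the first assertion forces $v'$ into $G_1 \cdot v$, a contradiction.

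For part (ii): the canonical projection $\pi\colon V \to V/\!\!/G_2$ is $G_1$-equivariant (for the induced $G_1$-action on the quotient) and $\lambda_1$-equivariant, so $\pi(v') = \lim_{a\to 0}\lambda_1(a)\cdot \pi(v)$ in $V/\!\!/G_2$. By hypothesis $\pi(v')$ is $G_1$-conjugate to $\pi(v)$; applying part (i) with $G_2$ trivial (or Theorem \ref{thm:Ruconj} directly) to the $G_1$-variety $V/\!\!/G_2$, we find $g \in G_1$ with $\pi(g\cdot v') = \pi(v') \cdot$ --- more precisely, $u \in R_u(P_{\lambda_1})$ with $\pi(u\cdot v) = \pi(v')$, i.e. $u\cdot v$ and $v'$ lie in the same fibre of $\pi$. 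The hypothesis $\pi^{-1}(\pi(v)) = G_2\cdot v$ (applied at the point $u\cdot v$, whose fibre is $G_2 \cdot (u\cdot v)$ since $u$ centralizes the $G_2$-action) then shows $v'$ and $u\cdot v$ are $G_2$-conjugate, say $v' = gu\cdot v$ with $g\in G_2$; but $u \in R_u(P_{\lambda_1})$ fixes $\lambda_1$-limits appropriately --- more carefully, we use Eqn.~\eqref{eqn:limx.v} with $x = u$ to get $v' = \lim \lambda_1(a)\cdot(u\cdot v) $... and then conclude $v'$ is $G_1$-conjugate to $v$.

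I expect the main obstacle to be part (ii): one must check carefully that $\pi$ is genuinely $G_1$-equivariant for a well-defined $G_1$-action on $V/\!\!/G_2$ (this uses that $G_1$ and $G_2$ commute, so $G_1$ preserves the ring of $G_2$-invariants), that limits are compatible with $\pi$, and --- the delicate point --- that after using part (i) on the quotient to land in the right fibre, the fibre hypothesis $\pi^{-1}(\pi(v)) = G_2\cdot v$ can be leveraged to replace a $G_1 \times G_2$-conjugacy by a $G_1$-conjugacy, at which stage one invokes part (i) a second time. Everything else (the structure of $P_\lambda$ and $R_u(P_\lambda)$ for $\lambda = (\lambda_1,0)$, the Hilbert--Mumford argument for the closedness statement) is routine.
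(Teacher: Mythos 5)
Your proposal is correct and matches the paper's proof: part (i) is exactly the paper's argument (Theorem \ref{thm:Ruconj} applied over $\ovl{k}$, noting $R_u(P_{(\lambda_1,0)}(G_1\times G_2)) = R_u(P_{\lambda_1}(G_1))\times\{1\}$, with the Hilbert--Mumford Theorem giving the closedness claim), and part (ii) is the reduction to (i) that the paper leaves as immediate. The only superfluous step is invoking Theorem \ref{thm:Ruconj} again on the $G_1$-variety $V/\!\!/G_2$ to produce $u\in R_u(P_{\lambda_1})$: any $g_1\in G_1$ with $g_1\cdot\pi(v)=\pi(v')$ already forces $v'\in g_1\cdot\pi^{-1}(\pi(v))=g_1G_2\cdot v$, which is the $(G_1\times G_2)$-conjugacy to which part (i) applies.
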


\begin{proof}
(i). By Theorem~\ref{thm:Ruconj}, there exists $u\in R_u(P_{\lambda_1}(G_1\times G_2))$ such that
$v'= u\cdot v$.  But $R_u(P_{\lambda_1}(G_1\times G_2))= R_u(P_{\lambda_1}(G_1))\times \{1\}$, so $v'$
is $G_1$-conjugate to $v$, as required.  The second assertion follows immediately from
the Hilbert-Mumford Theorem.

(ii). This follows immediately from (i).
\end{proof}

The following example shows that
the converse of Corollary \ref{cor:doubleorbit}(i) does not
hold in general.

\begin{exmp}
Let $G = G_1 \times G_2$, where $G_i= k^*$ for $i = 1,2$
(here $k$ is assumed algebraically closed).
Set $V = k^2$, and let $G$ act on $V$ as follows:
$$
(t_1,t_2)\cdot(x_1,x_2) := (t_1^2t_2\inverse x_1, t_2^2t_1\inverse x_2),
$$
for $t_i \in G_i$ and $(x_1,x_2) \in V$.
Consider the point $(1,1) \in V$.
Then the $G_i$-orbits of $(1,1)$ are clearly closed,
but the $G$-orbit of $(1,1)$ is not closed (if $\lambda \in Y(G)$ is given by $\lambda(a) = (a,a)$,
then $\lim_{a\ra 0} \lambda(a)\cdot (1,1)= (0,0)$).
\end{exmp}

Here is a further consequence of Theorem \ref{thm:Ruconj}:
it gives a criterion for determining whether an orbit
is closed when $k$ is perfect.

\begin{cor}
\label{cor:perfectclsd}
Assume $k$ is perfect.
Let $V$ be a $k$-defined affine $G$-variety and let $v\in V$ such that $C_{G(k_s)}(v)$ is $\Gamma$-stable.
Suppose there exists $\lambda\in Y_k(G)$ such that
$v':= \lim_{a\ra 0} \lambda(a)\cdot v$ exists and
is not $R_u(P_\lambda)(k)$-conjugate to $v$.
Then $v'\not\in G\cdot v$.  In particular, $G\cdot v$ is not closed.
\end{cor}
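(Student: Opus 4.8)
The statement to prove is Corollary \ref{cor:perfectclsd}: assuming $k$ perfect, $V$ a $k$-defined affine $G$-variety, $v \in V$ with $C_{G(k_s)}(v)$ $\Gamma$-stable, and $\lambda \in Y_k(G)$ with $v' := \lim_{a\to 0}\lambda(a)\cdot v$ existing but \emph{not} $R_u(P_\lambda)(k)$-conjugate to $v$, we must conclude $v' \notin G\cdot v$, and hence that $G\cdot v$ is not closed. The plan is to argue by contraposition: suppose for a contradiction that $v' \in G\cdot v$. I would like to upgrade this to $v'$ being $G(k)$-conjugate to $v$ so that Theorem \ref{thm:Ruconj} applies, but that upgrade is exactly the subtle ``intermediate case'' the introduction flags as unresolved, so I cannot do it directly for an arbitrary point. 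Instead I would work over $k_s$ (which equals $\ovl{k}$ since $k$ is perfect) and then descend.

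The cleaner route is: since $v' \in G\cdot v$ and both limits and orbits behave well under the embedding of Remark \ref{rem:linear}, reduce to the case where $V$ is a $k$-defined rational $G$-module. Now $v' = g\cdot v$ for some $g \in G = G(\ovl{k})$, so in particular $v$ and $v'$ lie in the same $G$-orbit. Apply the already-established Theorem \ref{thm:Ruconj} over the algebraically closed field $\ovl{k} = k_s$: since $v'$ is $G$-conjugate (equivalently $G(k_s)$-conjugate, as $k_s = \ovl{k}$) to $v$ and $\lim_{a\to 0}\lambda(a)\cdot v = v'$ with $\lambda \in Y(G)$, the algebraically-closed case of that theorem (its first paragraph) gives $u \in R_u(P_\lambda)(k_s)$ with $v' = u\cdot v$. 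Now invoke Theorem \ref{thm:sepRuconj}: we have $v'$ is $R_u(P_\lambda)(k_s)$-conjugate to $v$ and $C_{G(k_s)}(v)$ is $\Gamma$-stable by hypothesis, so $v'$ is $R_u(P_\lambda)(k)$-conjugate to $v$ — contradicting the hypothesis. This contradiction shows $v' \notin G\cdot v$.

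For the final sentence, ``in particular $G\cdot v$ is not closed'': since $v' = \lim_{a\to 0}\lambda(a)\cdot v$ lies in $\ovl{G\cdot v}$, and we have just shown $v' \notin G\cdot v$, the orbit $G\cdot v$ is properly contained in its closure, so it is not closed.

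The main obstacle is purely bookkeeping: making sure the hypotheses line up so that Theorem \ref{thm:sepRuconj} can be applied — in particular that the $\Gamma$-stability of $C_{G(k_s)}(v)$ carries through the reduction of Remark \ref{rem:linear} to a $k$-defined rational $G$-module (it does, since a $k$-defined $G$-equivariant closed embedding identifies stabilizers and is $\Gamma$-equivariant on $k_s$-points), and that $\lambda \in Y_k(G) \subseteq Y(G)$ so the algebraically-closed case of Theorem \ref{thm:Ruconj} genuinely applies with no rationality constraint on $v$. There is no hard analytic or geometric content beyond what is already packaged in Theorems \ref{thm:Ruconj} and \ref{thm:sepRuconj}; the corollary is essentially the contrapositive of their combination.
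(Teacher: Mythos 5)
Your proposal is correct and follows essentially the same route as the paper: assume $v'\in G\cdot v$, invoke Theorem \ref{thm:Ruconj} over $\ovl{k}=k_s$ (the algebraically closed case) to get $R_u(P_\lambda)(k_s)$-conjugacy, then apply Theorem \ref{thm:sepRuconj} with the $\Gamma$-stability of $C_{G(k_s)}(v)$ to descend to $R_u(P_\lambda)(k)$-conjugacy, contradicting the hypothesis. The only difference is your optional reduction to a rational $G$-module, which the paper does not need and which is harmless.
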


\begin{proof}
 Suppose $v'$ is $G$-conjugate to $v$.
Then $v'$ is $R_u(P_\lambda)$-conjugate to $v$, by
Theorem \ref{thm:Ruconj}.  Since $C_{G(k_s)}(v)$ is $\Gamma$-stable and $k$ is perfect,
$v'$ is $R_u(P_\lambda)(k)$-conjugate to $v$ by
Theorem \ref{thm:sepRuconj},
a contradiction.  Hence $v'\not\in G\cdot v$,
and thus this orbit is not closed.
\end{proof}

In order to state our next main result,
we need an appropriate extension of the concept of orbit closure
to the non-algebraically closed case.

\begin{defn}
\label{def:cocharclosure}
Let $V$ be a $k$-defined affine
$G$-variety.  Let $v\in V$.  We say that the
$G(k)$-orbit $G(k)\cdot v$ is \emph{cocharacter-closed over $k$}
if for any $\lambda\in Y_k(G)$
such that $v':= \lim_{a\ra 0} \lambda(a)\cdot v$ exists,
$v'$ is $G(k)$-conjugate to $v$.
Note that we do not require $v$ to be a $k$-point of $V$.
\end{defn}

\begin{rem}
\label{rem:cocharclosure}
In what follows we give $V(k)$ the topology induced by the Zariski topology~of~$V$.

(i). Let $v\in V(k)$. If $k$ is infinite, then $G(k)$ is dense in $G$ \cite[V.18.3~Cor.]{Bo}, so $G(k)\cdot v$ is dense in the closure of $G\cdot v$.  It follows easily that if $G(k)\cdot v$ is closed in $V(k)$, then $G(k)\cdot v$ is cocharacter-closed over $k$. Corollary~\ref{cor:perfectorbitcrit} below
now implies that if $k$ is infinite and perfect and $G(k)\cdot v$ is closed in $V(k)$, then $G\cdot v$ is closed. On the other hand, if $k$ is finite, then $G(k)\cdot v$ is a finite subset of $V(k)$ and hence is closed in $V(k)$, even though $G(k)\cdot v$ need not be cocharacter-closed over $k$.

(ii). If $G(k)\cdot v$ is cocharacter-closed over $k$, then $G(k)\cdot v$ need not be closed in $V(k)$, even if $G\cdot v$ is closed.  We give two examples.  First, let $k$ be a non-perfect field of characteristic $p>0$. Let $G=\ovl{k}^*$ acting on $V=\ovl{k}^*$ by $g\cdot v= g^pv$. Give $G$ and $V$ the obvious $k$-structures. Then $G(k)\cdot 1=(k^*)^p$ is not closed in $V(k)=k^*$, but $G\cdot 1=\ovl{k}^*$ is closed. Moreover, $G(k)\cdot 1$ is cocharacter-closed over $k$, since the limit $\lim_{a\ra 0} \lambda(a)\cdot 1$ does not exist for any non-trivial $\lambda\in Y(G)$.

Second, let $k={\mathbb R}$ and let $G= {\rm SL}_2$ acting on $V=G$ by conjugation.  Let $v= \twobytwo{0}{-1}{1}{0}$, $w= \twobytwo{0}{1}{-1}{0}$, and $g= \twobytwo{i}{0}{0}{-i}$.  Then $v,w\in V({\mathbb R})$ and $w= g\cdot v$, so $v$ and $w$ are $G({\mathbb C})$-conjugate, but it is easily checked that they are not $G({\mathbb R})$-conjugate.  Hence $w$ lies in the closure in $V({\mathbb R})$ of $G({\mathbb R})\cdot v$, which implies that $G({\mathbb R})\cdot v$ is not closed in $V({\mathbb R})$.  But $G\cdot v$ is closed since $v$ is semisimple (\cite[III.9.2~Thm.]{Bo}), so $G({\mathbb R})\cdot v$ is cocharacter-closed over ${\mathbb R}$ by Corollary \ref{cor:perfectclsd}.

(iii). In \cite{levy}, Levy investigated
a notion similar to our concept of cocharacter-closure
in case of a rational $G$-module in characteristic $0$.
\end{rem}

Our next result says that we can remove the hypothesis
that $k$ is perfect in Theorem \ref{thm:Ruconj}
if we assume that $G$ is connected and $G(k)\cdot v$ is cocharacter-closed over $k$.

\begin{thm}
\label{thm:cocharclosedcrit}
Suppose $V$ is a $k$-defined affine $G$-variety.
Assume that $G$ is connected. Let $v\in V$.
Then the following are equivalent:
\begin{enumerate}[{\rm (i)}]
\item $G(k)\cdot v$ is cocharacter-closed over $k$;
\item for all $\lambda\in Y_k(G)$,
if $v':= \lim_{a\ra 0} \lambda(a)\cdot v$ exists,
then $v'$ is $R_u(P_\lambda)(k)$-conjugate to $v$.
\end{enumerate}
\end{thm}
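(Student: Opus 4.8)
The plan: the implication (ii) $\Rightarrow$ (i) is immediate, since $R_u(P_\lambda)(k)\subseteq G(k)$, so all the content lies in (i) $\Rightarrow$ (ii). For that direction I would first use a $k$-defined $G$-equivariant embedding (Remark~\ref{rem:linear}) to reduce to the case that $V$ is a $k$-defined rational $G$-module. Fix $\lambda\in Y_k(G)$ with $v':=\lim_{a\to0}\lambda(a)\cdot v$ existing; by hypothesis~(i) there is $g\in G(k)$ with $g\cdot v=v'$, and since $v'$ is fixed by $\lambda(\ovl{k}^*)$ one checks immediately that $\mu:=g\inverse\cdot\lambda\in Y_k(G)$ centralizes $v$. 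Next I would reduce to the case $v\in V(k)$: writing $v=\sum_{i=1}^n\alpha_iv_i$ with the $\alpha_i\in\ovl{k}$ linearly independent over $k$ and $v_i\in V(k)$, and setting $\tuple v=(v_1,\dots,v_n)\in V(k)^n$ with $G$ acting diagonally, Lemma~\ref{lem:notrat} shows that $G(k)\cdot\tuple v$ is again cocharacter-closed over $k$ and that the conclusion for $\tuple v$ (with the same $\lambda$, hence the same $P_\lambda$ and $R_u(P_\lambda)$) yields the conclusion for $v$. So I may assume $v\in V(k)$; then $C_{G(\ovl k)}(v)$ is stable under $\Gal(\ovl k/k)$, and hence under $\Gal(\ovl k/F)$ for every intermediate field $F$.

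The conjugating element is then built in descending stages. Since $v'\in G(k)\cdot v\subseteq G(\ovl k)\cdot v$, the algebraically closed case of Theorem~\ref{thm:Ruconj} shows that $v$ and $v'$ are $R_u(P_\lambda)(\ovl k)$-conjugate. Applying Theorem~\ref{thm:sepRuconj} with the ground field taken to be the perfect closure $F$ of $k$ — for which $F_s=\ovl k$, and $C_{G(\ovl k)}(v)$ is $\Gal(\ovl k/F)$-stable because $v\in V(k)\subseteq V(F)$ — we deduce that $v$ and $v'$ are $R_u(P_\lambda)(F)$-conjugate, hence $R_u(P_\lambda)(k^{1/p^n})$-conjugate for some $n$, where $p=\Char k$. (When $\Char k=0$ this already finishes, since then $k$ is perfect; so from now on $\Char k=p>0$.) It remains to descend along the finite purely inseparable extension $k^{1/p^n}/k$, and this is the crux — exactly the point where cocharacter-closedness over $k$, rather than over some perfect extension, is indispensable; its omission makes the statement false, as the imperfect examples of Remark~\ref{rem:cocharclosure}(ii) illustrate.

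For this last descent I would argue by induction along the $\lambda$-weight filtration of $R_u(P_\lambda)$, whose successive quotients are $k$-split vector groups, all $k$-defined since $\lambda\in Y_k(G)$, peeling these off one layer at a time (the base case $R_u(P_\lambda)=\{1\}$, i.e.\ $\lambda$ central, being handled directly: cocharacter-closedness forces $v'=v$). At each stage the obstruction to realising the partial conjugation over $k$ is a point of such a vector group, and I would kill it by feeding the cocharacter-closedness hypothesis an auxiliary $k$-defined cocharacter manufactured from $\lambda$ (and $\mu$) via Lemma~\ref{lem:dblecochar} — of the form $t\lambda+\nu$ with $\nu\in Y_k(G)$ commuting with $\lambda$ and $t$ large, after straightening $\mu$ into a common $k$-defined maximal torus with $\lambda$ using Lemma~\ref{lem:Rparrat} and Corollary~\ref{cor:kLevi} — together with the fact that cocharacter-closedness over $k$ is inherited by the action of the $k$-defined Levi subgroup $L_\mu$ on the slice $V_{\mu,0}$ containing $v$. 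Connectedness of $G$ enters precisely here, to guarantee a rich enough supply of $k$-defined cocharacters.

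The step I expect to be the main obstacle is this final purely inseparable descent: controlling, through the filtration argument, the interaction between the $\lambda$-weight layers and the stabilizer $C_G(v)$, which need not be $k$-defined when $k$ is imperfect — so the clean device of Theorem~\ref{thm:sepRuconj}, passing to a $k$-defined maximal torus of $C_G(v)$, is unavailable and must be replaced by the more delicate use of $k$-defined $R$-parabolic and $R$-Levi structure combined with the cocharacter-closedness hypothesis.
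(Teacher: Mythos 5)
Your easy steps are fine: (ii)$\Rightarrow$(i) is immediate, the reduction to a $k$-defined rational $G$-module via Remark~\ref{rem:linear} is standard, the passage from arbitrary $v$ to a tuple of $k$-points via Lemma~\ref{lem:notrat} does preserve both the hypothesis and the conclusion (though the paper's argument never needs it, since it uses no rationality property of $v$), and combining the algebraically closed case of Theorem~\ref{thm:Ruconj} with Theorem~\ref{thm:sepRuconj} over the perfect closure $F$ legitimately yields $R_u(P_\lambda)(k^{1/p^n})$-conjugacy. But the proof stops exactly where the theorem begins: the descent from $k^{1/p^n}$ to $k$ is the entire content of the statement, you flag it yourself as the main obstacle, and the plan you sketch for it is not a proof. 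No mechanism is given for turning cocharacter-closedness over $k$ --- a statement about $G(k)$-conjugacy of limits --- into the vanishing of the vector-group ``obstructions'' you propose to peel off layer by layer from $R_u(P_\lambda)$; the auxiliary cocharacters $t\lambda+\nu$ are invoked without saying what limit they are applied to or why the hypothesis controls it; and the asserted inheritance of cocharacter-closedness by the Levi $L_\mu$ acting on the slice $V_{\mu,0}$ is itself a nontrivial descent statement that is neither proved here nor available anywhere in the paper. Worse, purely inseparable descent of $R_u(P_\lambda)$-conjugacy is precisely the kind of statement one cannot take for granted: Theorem~\ref{thm:sepRuconj} explicitly requires separability, and Remark~\ref{rem:obstacle} shows that passing between $k$ and inseparable extensions genuinely changes which orbits are cocharacter-closed. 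So the detour through the perfect closure buys you nothing that demonstrably feeds the final step.

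The paper's proof avoids the perfect closure altogether and uses the hypothesis (i) directly. It inducts on $\dim V_{\lambda,0}$: taking a maximal $k$-split torus $S$ with $\lambda\in Y_k(S)$, it writes the element $g\in G(k)$ with $v'=g\cdot v$ via the relative Bruhat decomposition $G(k)=\bigcup_{w\in{}_kW^J}U_w'(k)\dot wP_\lambda(k)$ as $g=u'\dot w lu$. If $\dot w$ normalizes $V_{\lambda,0}$, then $\dot w^{-1}u'\dot w\in R_u(P_{-\lambda})$ and Lemma~\ref{lem:pconjuconj} gives $v'=u\cdot v$ with $u\in R_u(P_\lambda)(k)$ at once. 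If not, a support argument shows $\lim_{a\ra 0}(\dot w\cdot\lambda)(a)\cdot v'$ exists, and Lemma~\ref{lem:dblecochar} produces $\gamma=t\lambda+\dot w\cdot\lambda\in Y_k(G)$ with $P_\gamma\subseteq P_\lambda$ and $\dim V_{\gamma,0}<\dim V_{\lambda,0}$; the induction hypothesis applied to $\gamma$ makes $v$ and $v'$ both $P_\gamma(k)$-conjugate to the common limit $v''$, hence $P_\lambda(k)$-conjugate, and Lemmas~\ref{lem:Rparrat} and~\ref{lem:pconjuconj} upgrade this to $R_u(P_\lambda)(k)$-conjugacy. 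If you want to salvage your approach, you would have to supply a genuine inseparable-descent argument at the last step, and I see no route to one that is not essentially a rediscovery of this Bruhat-plus-induction scheme.
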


\begin{proof}
It is immediate that (ii) implies (i), so we need to prove that
(i) implies (ii).
Assume $G(k)\cdot v$ is cocharacter-closed over $k$.
Without loss of generality we can assume that $V$ is
a $k$-defined rational $G$-module (cf.\ Remark~\ref{rem:linear}).
We argue by induction on $\dim V_{\lambda,0}$
for $\lambda\in Y_k(G)$.
Suppose $\lambda\in Y_k(G)$ and let $v\in V$ such that
$v':= \lim_{a\ra 0} \lambda(a)\cdot v$ exists.
If $\dim V_{\lambda,0}=0$, then $V_{\lambda,0}=0$ and so $v'=0$, which forces $v=0$ and we are done.
Let $S$ be a maximal $k$-split torus of $G$ with $\lambda\in Y_k(S)$, let
${}_k\Psi$ be the set of roots of $G$ relative to $S$ and
let ${}_kW=N_G(S)/C_G(S)$ be the Weyl group over $k$.
Any $w\in {}_kW$ has a representative
in $N_G(S)(k)$, see \cite[V.21.2]{Bo}.
We have $C_G(S)\subseteq P_\lambda$.
Fix a minimal
$k$-defined parabolic subgroup $P$ of $G$
with $C_G(S)\subseteq P\subseteq P_\lambda$.
Using the notation of \cite[V.21.11]{Bo},
the choice of $P$ corresponds to a choice of
simple roots ${}_k\Delta\subseteq{}_k\Psi$ and then,
$P_\lambda={}_kP_J$ for a unique subset $J$ of ${}_k\Delta$.
Define the subset ${}_kW^J$ of ${}_kW$ as in \cite[V.21.21]{Bo},
and for each $w\in{}_kW^J$ define the subgroup $U_w'$ of $R_u(P_\lambda)$,
as in \cite[V.21.14]{Bo}. For each $w\in{}_kW^J$, let $\dot{w}$ be
a representative of $w$ in $N_G(S)(k)$.
Then, by \cite[V.21.16 and V.21.29]{Bo}
or \cite[3.16 proof]{BoTi},  we have
\begin{equation}
\label{eqn:bruhat}
G(k)=\bigcup_{w\in{}_kW^J}U_w'(k)\dot{w}P_\lambda(k)
\end{equation} and
\begin{equation}
\label{eqn:conjugate}
\dot{w}^{-1}U_w'\dot{w}\subseteq R_u(P_{-\lambda})\text{\quad for each }w\in{}_kW^J.
\end{equation}

Since $G(k)\cdot v$ is cocharacter-closed over $k$,
there exists $g\in G(k)$ such that $v'= g\cdot v$.
By \eqref{eqn:bruhat} and Lemma~\ref{lem:Rparrat},
we have $g=u'\dot{w}lu$ for some $w\in{}_kW^J$,
$u'\in U_w'(k)$, $l\in L_\lambda(k)$ and $u\in R_u(P_\lambda)(k)$.
Now the argument splits in two cases. Put $n=\dot{w}$.\smallskip\\
{\bf Case 1:} $n$ normalizes $V_{\lambda,0}$.
Then $n^{-1}u'nlu\cdot v=n^{-1}\cdot v' \in V_{\lambda,0}$.
Furthermore, $n^{-1}u'n\in R_u(P_{-\lambda})$ by \eqref{eqn:conjugate}, so $n^{-1}u'nl\in P_{-\lambda}$.
The desired conclusion follows from
Lemma~\ref{lem:pconjuconj}.\smallskip\\
{\bf Case 2:} $n$ does not normalize $V_{\lambda,0}$.
Let $\Phi$ be the set of weights of $S$ on $V$
and for $\nu\in Y_k(S)$ let $\Phi_{\nu,\geq 0}$ be the set
of weights $\chi\in\Phi$ such that $\langle \nu,\chi\rangle\ge0$.
We have $v'=u'nlu\cdot v$ and therefore $n^{-1}u'^{-1}\cdot v'=lu\cdot v$.
Furthermore, $u'^{-1}\cdot v'- v'\in V_{\lambda,>0}$
by  ~\eqref{eqn:repborel2},
whence ${\rm supp}_S(v')\subseteq {\rm supp}_S(u'^{-1}\cdot v')$.
Now $n^{-1}$ normalizes $S$, so
$$
n^{-1}\cdot{\rm supp}_S(v')\subseteq n^{-1}\cdot{\rm supp}_S(u'^{-1}\cdot v')
={\rm supp}_S(n^{-1}u'^{-1}\cdot v')\subseteq \Phi_{\lambda,\geq 0},
$$
since $n^{-1}u'^{-1}\cdot v'=lu\cdot v\in V_{\lambda,\ge0}$.
It follows that
${\rm supp}_S(v')\subseteq n\cdot\Phi_{\lambda,\geq 0}=\Phi_{n\cdot\lambda,\geq 0}$.
So $v'':= \lim_{a\ra 0} (n\cdot \lambda)(a)\cdot v'$ exists.
We can choose $\gamma\in Y_k(G)$
of the form $\gamma= t\lambda+ n\cdot \lambda$ for $t\in {\mathbb N}$
sufficiently large such that the following hold:
\begin{enumerate}[(1)]
\item $V_{\gamma,0}\subseteq V_{\lambda,0}$ and $V_{\gamma,0}\subseteq V_{n\cdot \lambda,0}$;
\item $v''= \lim_{a\ra 0} \gamma(a)\cdot v$;
\item $v''= \lim_{a\ra 0} \gamma(a)\cdot v'$;
\item $P_\gamma\subseteq P_\lambda$.
\end{enumerate}
Properties (1), (2) and (4) follow immediately from Lemma \ref{lem:dblecochar}, while (3) follows from (2), since
$\lim_{a\ra 0} (n\cdot \lambda)(a)\cdot v'=v''$ and
$\lambda(k^*)$ fixes $v'$.
If $V_{\lambda,0}=V_{\gamma,0}$, then $V_{\lambda,0}\subseteq V_{n\cdot \lambda,0}=n\cdot V_{\lambda,0}$, so $V_{\lambda,0}=n\cdot V_{\lambda,0}$,
contradicting the fact that $n$ does not normalize $V_{\lambda,0}$.
Hence we must have $\dim V_{\gamma,0}< \dim V_{\lambda,0}$.
Now $G(k)\cdot v'=G(k)\cdot v$ is cocharacter-closed over $k$.
So, by the induction hypothesis, $v$ and $v'$
are both $R_u(P_\gamma)(k)$-conjugate --- and hence $P_\gamma(k)$-conjugate --- to $v''$,
and hence $v$ and $v'$ are $P_\gamma(k)$-conjugate.
By (4), $v$ and $v'$ are $P_\lambda(k)$-conjugate.
But then they are $R_u(P_\lambda)(k)$-conjugate,
by Lemmas~\ref{lem:Rparrat} and \ref{lem:pconjuconj}.
\end{proof}

In view of Theorems \ref{thm:sepRuconj}, \ref{thm:Ruconj} and \ref{thm:cocharclosedcrit},
it is natural to ask the following rationality question.

\begin{question}
\label{qn:cocharclsdext}
Let $V$ be a $k$-defined affine
$G$-variety.  Let $v\in V$ such that $C_{G(k_s)}(v)$ is $\Gamma$-stable.
Suppose $k_1/k$ is an algebraic extension.
Is it true that $G(k_1)\cdot v$ is cocharacter-closed over $k_1$
if and only if $G(k)\cdot v$ is cocharacter-closed over $k$?
\end{question}

Our final result in this section gives an affirmative answer to the forward implication of
Question \ref{qn:cocharclsdext}
in two instances.

\begin{thm}
\label{thm:GeneralSeparableDown}
Let $k_1/k$ be an algebraic extension of fields and
let $V$ be a $k$-defined affine $G$-variety.
Let $v \in V$ such that $C_{G(k_s)}(v)$ is $\Gamma$-stable.
Suppose that (i) $G$ is connected and $k_1/k$ is separable, or (ii) $k$ is perfect.
If $G(k_1)\cdot v$ is cocharacter-closed over $k_1$,
then $G(k)\cdot v$ is cocharacter-closed over $k$.
\end{thm}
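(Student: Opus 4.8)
The plan is to fix a cocharacter $\lambda\in Y_k(G)$ for which $v':=\lim_{a\to 0}\lambda(a)\cdot v$ exists and to show that $v'$ is $R_u(P_\lambda)(k)$-conjugate to $v$; since $\lambda$ is arbitrary and $R_u(P_\lambda)(k)$-conjugacy forces $G(k)$-conjugacy, this establishes that $G(k)\cdot v$ is cocharacter-closed over $k$. In both cases the strategy is the same: produce a conjugating element defined over $k_s$ and then descend to $k$ via Theorem~\ref{thm:sepRuconj}, whose $\Gamma$-stability hypothesis on $C_{G(k_s)}(v)$ is exactly what we have assumed. The common starting input is that $Y_k(G)\subseteq Y_{k_1}(G)$, so cocharacter-closedness of $G(k_1)\cdot v$ over $k_1$ already tells us that $v'$ is $G(k_1)$-conjugate to $v$.

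For case~(ii), perfectness of $k$ gives $k_s=\ovl k$, so $v'$ is $G(\ovl k)$-conjugate to $v$. Applying Theorem~\ref{thm:Ruconj} over the perfect (indeed algebraically closed) field $\ovl k$ --- this is just the algebraically closed case of that theorem, which imposes no connectedness assumption on $G$ --- upgrades this to: $v'$ is $R_u(P_\lambda)(\ovl k)=R_u(P_\lambda)(k_s)$-conjugate to $v$. Theorem~\ref{thm:sepRuconj} then produces an element of $R_u(P_\lambda)(k)$ conjugating $v$ to $v'$, as required.

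For case~(i) the field $k$ may be imperfect, so one cannot route through $\ovl k$: a conjugating element in $R_u(P_\lambda)(\ovl k)$ need not descend past $k_s$. Instead I would use that $G$ is connected together with Theorem~\ref{thm:cocharclosedcrit} applied over $k_1$: since $G(k_1)\cdot v$ is cocharacter-closed over $k_1$ and $\lambda\in Y_{k_1}(G)$, that theorem gives that $v'$ is $R_u(P_\lambda)(k_1)$-conjugate to $v$. As $k_1/k$ is separable algebraic we may regard $k_1$ as a subfield of $k_s$, so $v'$ is in fact $R_u(P_\lambda)(k_s)$-conjugate to $v$, and Theorem~\ref{thm:sepRuconj} once more descends this to $R_u(P_\lambda)(k)$-conjugacy.

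The only real subtlety is matching the correct descent tool to each hypothesis: case~(ii) exploits the equality $\ovl k=k_s$ to invoke Theorem~\ref{thm:Ruconj} for possibly-disconnected $G$, whereas case~(i) cannot reach $\ovl k$ and therefore must use connectedness so that Theorem~\ref{thm:cocharclosedcrit} applied over $k_1$ lands directly inside $R_u(P_\lambda)(k_1)\subseteq R_u(P_\lambda)(k_s)$. No estimates or constructions beyond this bookkeeping are expected; the remaining content is a direct citation of results already in hand.
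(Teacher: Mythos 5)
Your proposal is correct and follows essentially the same route as the paper: for a fixed $\lambda\in Y_k(G)$ one first obtains $R_u(P_\lambda)$-conjugacy of $v$ and $v'$ over a larger field (Theorem~\ref{thm:cocharclosedcrit} in the connected/separable case, Theorem~\ref{thm:Ruconj} in the perfect case) and then descends to $R_u(P_\lambda)(k)$-conjugacy via Theorem~\ref{thm:sepRuconj} using the $\Gamma$-stability of $C_{G(k_s)}(v)$. The only (harmless) cosmetic difference is in case~(ii): the paper applies Theorem~\ref{thm:Ruconj} over $k_1$ itself, which is perfect and separable over $k$ since $k$ is perfect, whereas you pass through $\ovl k=k_s$; both land in $R_u(P_\lambda)(k_s)$ and the descent step is identical.
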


\begin{proof}
Suppose $\lambda \in Y_k(G)$ such that $v' = \lim_{a\ra 0} \lambda(a)\cdot v$
exists.  Then $\lambda \in Y_{k_1}(G)$.  Since
$G(k_1)\cdot v$ is cocharacter-closed over $k_1$,
$v'$ is $R_u(P_\lambda)(k_1)$-conjugate to $v$, by
Theorem \ref{thm:cocharclosedcrit} in case (i)
and Theorem \ref{thm:Ruconj} in case (ii).
Since $k_1/k$ is separable,  Theorem \ref{thm:sepRuconj}
implies that $v'$ is $R_u(P_\lambda)(k)$-conjugate to $v$.
Hence $G(k)\cdot v$ is cocharacter-closed over $k$, as required.
\end{proof}

\begin{rems}
(i). If $v\in V(k)$, then the reverse direction holds for $k$ perfect in Theorem \ref{thm:GeneralSeparableDown} and the answer to Question \ref{qn:cocharclsdext} is yes: this follows from Corollary \ref{cor:perfectorbitcrit} below.

(ii). For arbitrary $k$ it can happen that $G(k)\cdot v$ is not
cocharacter-closed over $k$ but $G\cdot v$ is closed or vice versa, even when $v\in V(k)$ (see Remark \ref{rem:obstacle}; cf.\ also Remark \ref{rems:optimal}(ii)).
\end{rems}

\section{Uniform $S$-instability}
\label{sec:uniform}

In this section we show that the results of Kempf in \cite{kempf}
extend to \emph{uniform instability} as defined by W.~Hesselink in \cite{He}.
Since this is a straightforward modification of Kempf's arguments, we
only indicate the relevant changes.
We point out here that
the extension to non-connected $G$ is unnecessary for
the results in this section,
since they follow immediately from the corresponding
statements for $G$ connected.
We state the results for $G$
non-connected, because this is more convenient for our
applications.
As our field $k$ is not necessarily algebraically closed,
we restrict to
$k$-defined cocharacters of $G$ in Kempf's optimization procedure,
cf. \cite{He}.

Throughout this section,
$G$ is a reductive $k$-defined normal subgroup of a $k$-defined linear algebraic group $G'$
which acts on an
affine $\ovl{k}$-variety
$V$, and $S$ is a non-empty $G$-stable closed subvariety of $V$.

\begin{defn}
\label{def:norm}
A \emph{$G'(k)$-invariant norm} on $Y_k(G)$ is a
non-negative real-valued function $\left\|\,\right\|$ on $Y_k(G)$
such that
\begin{itemize}
\item[(i)] $\left\| g \cdot \lambda \right\| = \left\| \lambda \right\|$
for any $g \in G'(k)$ and any $\lambda \in Y_k(G)$,
\item[(ii)] for any $k$-split $k$-defined torus $T$ of $G$, there is a positive definite
integer-valued form $(\ {,}\ )$ on $Y_k(T)$ such that
$(\lambda, \lambda) = \left\| \lambda \right\|^2$ for any $\lambda \in Y_k(T)$.
\end{itemize}

If $k=\ovl{k}$, then we speak of a \emph{$G'$-invariant norm} on $Y(G)$, and in this case we say that $\left\|\,\right\|$ on $Y(G)$ is \emph{$k$-defined}
if it is $\Gamma$-invariant (see \cite[Sec.\ 4]{kempf}).  Note that a $G'$-invariant norm $\left\|\,\right\|$ on $Y(G)$ determines
a $G'(k)$-invariant norm on $Y_k(G)$.  A $k$-defined $G'$-invariant norm on $Y(G)$ always exists,
by the argument of \cite[1.4]{He}.
\end{defn}

\begin{defn}
\label{def:destabilizing}
For each non-empty subset $X$ of $V$,
define $\Lambda(X)$ as the set of
$\lambda \in Y(G)$ such that $\underset{a\to 0}{\lim}\, \lambda(a)\cdot x$
exists for all $x\in X$, and put
$\Lambda(X,k)=\Lambda(X)\cap Y_k(G)$.
Extending Hesselink \cite{He}, we call $X$ \emph{uniformly $S$-unstable}
if there exists
$\lambda \in \Lambda(X)$ such that
$\underset{a\to 0}{\lim}\, \lambda(a)\cdot x \in S$ for all $x \in X$,
and we say that such a cocharacter \emph{destabilizes $X$ into $S$}
or is a \emph{destabilizing cocharacter for $X$ with respect to $S$}.
We call $X$ \emph{uniformly $S$-unstable over $k$} if there exists
such a $\lambda$ in $\Lambda(X,k)$.
We say that $x\in V$ is \emph{$S$-unstable over $k$} if
$\{x\}$ is uniformly $S$-unstable over $k$.
Finally, (uniformly $S$-) unstable without
specifying a field  always means (uniformly $S$-)
unstable over $\ovl{k}$. By the
Hilbert-Mumford Theorem, $x\in V$ is $S$-unstable
if and only if $\ovl{G\cdot x}\cap S\ne\varnothing$.
\end{defn}

\begin{rem}
Following Hesselink \cite[(2.1)]{He}, we allow the trivial case that $X\subseteq S$.
In this case the optimal class of Definition~\ref{def:optimalclass} below consists just of the trivial cocharacter $\lambda=0$ and the optimal destabilizing parabolic subgroup of Definition~\ref{def:optimalparabolic} is the whole of $G$.
Kempf \cite[Thm.~3.4]{kempf} only defines the optimal class and optimal destabilizing parabolic subgroup if $X=\{x\}$ and $x\notin S$ and in this case our definitions coincide with his.
\end{rem}

Let $x\in V$ and let $\lambda\in\Lambda(x)$.
Let $\varphi:\ovl{k}\to V$ be the morphism
$\widehat\phi_{x,\lambda}$ from Section~\ref{subsec:Gvars}.
If $x\notin S$, then the scheme-theoretic inverse image
$\varphi^{-1}(S)$ is either empty or has affine ring
$\ovl{k}[T]/(T^m)$ for a unique $m\in {\mathbb N}$, and we define
$a_{S,x}(\lambda):=m$ (taking $m$ to be 0 if $\varphi^{-1}(S)$ is empty).
If $x\in S$, then we define $a_{S,x}(\lambda):=\infty$.
For a non-empty subset $X$ of $V$ and $\lambda\in\Lambda(X)$,
we define $a_{S,X}(\lambda):=\min_{x\in X}a_{S,x}(\lambda)$. Note that
$a_{S,X}(\lambda)>0$ if and only if
$\underset{a\to 0}{\lim}\, \lambda(a)\cdot x\in S$
for all $x\in X$, $a_{S,X}(\lambda) = 0$ if and only if
$\underset{a\to 0}{\lim}\, \lambda(a)\cdot x\not\in S$ for some $x \in X$,
and $a_{S,X}(\lambda)=\infty$ if and only if $X\subseteq S$.

Now we choose a $G'(k)$-invariant norm $\left\|\,\right\|$ on $Y_k(G)$.

\begin{defn}
\label{def:optimalclass}
Let $X$ be a non-empty subset of $V$. If $X\subseteq S$,
we put $\Omega(X,S,k)=\{0\}$,
where $0$ denotes the trivial cocharacter of $G$.
Now assume $X\nsubseteq S$. If the
function $\lambda\mapsto a_{S,X}(\lambda)/{\left\|\lambda\right\|}$
has a finite
strictly positive maximum value on $\Lambda(X,k)\setminus\{0\}$, then we define
$\Omega(X,S,k)$ as the set of indivisible cocharacters in
$\Lambda(X,k)\setminus\{0\}$ on
which this function takes its maximum value. Otherwise we define
$\Omega(X,S,k)=\varnothing$.
Note that $X$ is uniformly $S$-unstable over $k$
(in the sense of Definition \ref{def:destabilizing}) provided
$\Omega(X,S,k)\ne\varnothing$. The set  $\Omega(X,S,k)$ is called the
\emph{optimal class for $X$ with respect to $S$ over $k$}.
\end{defn}

We are now able to state and prove the analogue of Kempf's
instability theorem (\cite[Thm.\ 4.2]{kempf}) in this setting.

\begin{thm}
\label{thm:kempfrousseau}
Let $X$ be a non-empty subset of $V$ which is uniformly $S$-unstable over $k$.
Then $\Omega(X,S,k)$ is non-empty and has the following properties:
\begin{enumerate}[{\rm(i)}]
\item $\underset{a\to 0}{\lim}\, \lambda(a)\cdot x \in S$ for all
$\lambda \in \Omega(X,S,k)$ and any $x \in X$.
\item For all $\lambda, \mu \in \Omega(X,S,k)$, we have $P_\lambda = P_\mu$.
Let $P(X,S,k)$ denote the unique R-parabolic subgroup of $G$ so defined.  (Note that $P(X,S,k)$ is $k$-defined by Lemma \ref{lem:Rparrat}.)
\item If $g \in G'(k)$, then
$\Omega(g\cdot X,g\cdot S,k) = g\cdot \Omega(X,S,k)$
and $P(g\cdot X,g\cdot S,k) = gP(X,S,k)g^{-1}$.
\item $R_u(P(X,S,k))(k)$ acts simply transitively on $\Omega(X,S,k)$: that is,
for each $k$-defined R-Levi subgroup $L$ of $P(X,S,k)$, there exists one
and only one $\lambda \in \Omega(X,S,k)$ with $L = L_\lambda$.
Moreover, $N_{G(k)}(X) \subseteq P(X,S,k)(k)$.
\end{enumerate}
\end{thm}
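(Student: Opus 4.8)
The plan is to reduce everything to the connected case and to a rational $G$-module, and then to follow Kempf's original arguments \cite{kempf} almost verbatim, since the whole point of Section~\ref{sec:uniform} is that the optimization procedure of Kempf and Hesselink goes through in the uniform $S$-instability setting. For part~(i), I would argue by contradiction: if $\lambda\in\Omega(X,S,k)$ and $\lim_{a\to 0}\lambda(a)\cdot x\notin S$ for some $x\in X$, then $a_{S,X}(\lambda)=0$, so $a_{S,X}(\lambda)/\|\lambda\|=0$, contradicting the fact that the maximum is strictly positive (this is essentially immediate from the definition of $a_{S,X}$ and $\Omega(X,S,k)$). For part~(iii), I would use the $G'(k)$-invariance of the norm together with Eqn.~\eqref{eqn:limx.v} (or rather its analogue for the full $G'$-action): $a_{S,g\cdot X,g\cdot S}(g\cdot\lambda)=a_{S,X}(\lambda)$ and $\|g\cdot\lambda\|=\|\lambda\|$ for $g\in G'(k)$, so the function being maximized is equivariant and hence so is its argmax; the statement $P(g\cdot X,g\cdot S,k)=gP(X,S,k)g^{-1}$ then follows from $P_{g\cdot\lambda}=gP_\lambda g^{-1}$.

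The substantive content is parts~(ii) and~(iv), which mirror \cite[Thm.~3.4, Thm.~4.2]{kempf}. Here the strategy is the standard Kempf uniqueness argument: given $\lambda,\mu\in\Omega(X,S,k)$, one shows $P_\lambda=P_\mu$ by embedding $G$ (and the relevant data) into a rational $G$-module, using the convexity properties of the norm on $Y(T)$ for a common maximal torus, and exploiting that the function $\lambda\mapsto a_{S,X}(\lambda)/\|\lambda\|$ behaves well under the operations of Lemma~\ref{lem:dblecochar} (combining cocharacters $t\lambda+\mu$). The key point is that $a_{S,X}$ is defined via the scheme-theoretic inverse image $\varphi^{-1}(S)$, and one checks that $a_{S,X}(t\lambda+\mu)\ge t\,a_{S,X}(\lambda)+a_{S,X}(\mu)$ or some suitable inequality in the style of Hesselink's state function, so that an optimal $\lambda$ cannot be ``tilted'' towards another optimal $\mu$ without strictly increasing the ratio unless $P_\lambda=P_\mu$; this forces all optimal cocharacters into a single R-parabolic. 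Then for~(iv), once $P:=P(X,S,k)$ is pinned down and known to be $k$-defined via Lemma~\ref{lem:Rparrat}, one shows $R_u(P)(k)$ acts transitively on $\Omega(X,S,k)$ by the same conjugacy-of-Levi-subgroups mechanism used in the proof of Theorem~\ref{thm:sepRuconj}: two optimal cocharacters have the same norm and the same $P_\lambda$, their R-Levi subgroups $L_\lambda, L_\mu$ are $R_u(P)$-conjugate, and one promotes this to a $k$-rational conjugacy using Lemma~\ref{lem:Rparrat}(iii); simple transitivity follows because $R_u(P)$ acts simply transitively on R-Levi subgroups and an optimal cocharacter is determined by its R-Levi subgroup (again a convexity/minimality argument). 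Finally, $N_{G(k)}(X)\subseteq P(X,S,k)(k)$: if $g\in G(k)$ normalizes $X$, then by~(iii) applied with $S$ also normalized — one needs $g\cdot S = S$, which holds because $S$ is $G$-stable — we get $g\cdot\Omega(X,S,k)=\Omega(X,S,k)$, hence $g$ permutes the optimal cocharacters, hence normalizes $P=P_\lambda$ for $\lambda\in\Omega(X,S,k)$; but R-parabolic subgroups are self-normalizing (at least $N_G(P)^0 = P^0$ and the relevant argument from \cite[Sec.~6]{BMR} gives $N_G(P)=P$), so $g\in P(k)$.

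I expect the main obstacle to be part~(ii): verifying that Kempf's length-function/state-function machinery transfers cleanly when the single point $x$ and single target point are replaced by a set $X$ and a closed subvariety $S$ simultaneously. The delicate point is the behaviour of $a_{S,X}(\lambda)=\min_{x\in X}a_{S,x}(\lambda)$ under rescaling and under the cocharacter combinations of Lemma~\ref{lem:dblecochar}, and the fact that $\varphi^{-1}(S)$ is a non-reduced scheme (this is exactly why non-reduced schemes were flagged as playing a ``technical r\^ole'' in Section~\ref{sec:uniform}). One must check that $a_{S,x}$ is, up to the min, a Hesselink-type state function on $Y(G)$ so that Kempf's convexity argument applies; once that is in place, the uniqueness of $P_\lambda$, the $k$-rationality (via Lemma~\ref{lem:Rparrat}), and the simple transitivity of $R_u(P)(k)$ all follow by the same formal arguments as in \cite{kempf} and in the proof of Theorem~\ref{thm:sepRuconj} above, so I would present those steps briefly and refer to \cite{kempf} for the details, only spelling out the modifications needed to handle $X$ and $S$ in place of $x$ and a point.
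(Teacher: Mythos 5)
Your overall route is the paper's route: reduce to $G$ connected and to a rational $G$-module, then run Kempf's state formalism \cite{kempf} with the two modifications needed here --- a bounded admissible state attached to a subset rather than a single vector (Hesselink's device \cite[2.4]{He}), and states/optimization carried out over $k$, which uses maximal $k$-split tori and their $G(k)$-conjugacy rather than maximal tori; the target $S$ is handled, as you guess, by a morphism $f\colon V\to W$ whose scheme-theoretic fibre over $0$ is $S$, giving two states to compare. Parts (i) and (iii) are indeed immediate from the definitions and norm-equivariance, and your deferral of (ii) and the existence of the optimum to this machinery is at the same level of detail as the paper itself. But there are two concrete problems in your treatment of (iv).

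First, your simple-transitivity argument is circular: Lemma~\ref{lem:Rparrat}(iii) gives a unique $u\in R_u(P)(k)$ with $L_{u\cdot\mu}=L_\lambda$, but to conclude anything you must know $u\cdot\mu\in\Omega(X,S,k)$, i.e.\ that the optimal class is stable under $R_u(P)(k)$-conjugacy. This does not follow from (iii), since $u$ need not normalize $X$: one has $a_{S,X}(u\cdot\mu)=a_{S,\,u^{-1}\cdot X}(\mu)$, not $a_{S,X}(\mu)$. The statement ``every $k$-defined R-Levi of $P$ carries exactly one optimal cocharacter'' is precisely the $k$-rational analogue of \cite[Thm.~2.2(c)]{kempf} and has to come out of the state-formalism convexity argument, not out of Lemma~\ref{lem:Rparrat}. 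Second, and more seriously, your deduction of $N_{G(k)}(X)\subseteq P(X,S,k)(k)$ from self-normalization fails: R-parabolic subgroups of a non-connected reductive group are not self-normalizing in general (in the paper's own example of Remark~\ref{rem:krpars}, with $F$ inverting the one-dimensional torus $T$, one has $P_\lambda=L_\lambda=T$ for every nontrivial $\lambda\in Y(T)$, while $N_G(T)=G$), and you cannot reduce this assertion to $G^0$ because the normalizing element $g$ may lie outside $G^0$; the theorem is deliberately stated for non-connected $G$ (it is later applied with $G'=\Aut G$). The paper instead argues directly from simple transitivity: $g\cdot\lambda\in\Omega(X,S,k)$ by (iii) (using $g\cdot S=S$), so $g\cdot\lambda=u\cdot\lambda$ for some $u\in R_u(P(X,S,k))(k)$, whence $u^{-1}g\in C_G(\lambda(\ovl{k}^*))=L_\lambda\subseteq P(X,S,k)$ and $g\in P(X,S,k)\cap G(k)=P(X,S,k)(k)$. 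Replacing your last step by this argument, for which you already have all the ingredients, repairs the proof.
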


\begin{proof}
If $X \subseteq S$, then $\Omega(X,S,k) = \{0\}$ and $P(X,S,k) = G$,
so all the statements
are trivial in this case.  Hence we may assume that $X \not\subseteq S$.
We have that $G^0$ is $k$-defined and,
clearly, $Y_k(G)=Y_k(G^0)$ and $R_u(P_\lambda) = R_u(P_\lambda(G^0))$.
So we may assume that $G$ is connected.
We use Kempf's ``state formalism'', \cite[Sec.\ 2]{kempf}.
Actually we may consider states as only
defined on $k$-split subtori of $G$. First we need an analogue
over $k$ of \cite[Thm.~2.2]{kempf}. This is completely straightforward:
we simply work with $Y_k(G)$ instead of $Y(G)$ and use the conjugacy
of the maximal $k$-split tori of $G$ under $G(k)$, \cite[V.20.9(ii)]{Bo},
as in \cite{He}.  We also use the result that two $k$-defined parabolic subgroups of $G$ have a common maximal $k$-split torus \cite[V.20.7~Prop.]{Bo}.

Next we need a way to associate to a non-empty finite
subset $X_0\ne\{0\}$ of a rational $G$-module
$V_0$ a bounded admissible state. This is done as in \cite[2.4]{He}.
Then \cite[Lem.~3.2]{kempf} holds with $V$ and $v$ replaced by $V_0$ and $X_0$,
respectively.

Finally, we need to construct two bounded admissible states as in
\cite[Lem.~3.3]{kempf}. This is done precisely as in the proof of \emph{loc.\ cit.}
The embedding of $V$ in a rational $G$-module $V_0$ (denoted $V$ in \emph{loc.\ cit.}) and the
morphism $f:V\to W$,
$W$ a rational $G$-module, with the scheme-theoretic preimage $f^{-1}(0)$ equal to $S$, can be chosen as in \cite[Thm.~3.4]{kempf}.
Let $\Xi$ and $\Upsilon$ be the state of $X$ in $V_0$ and the state of
$f(X)$ in $W$, respectively.
Then assertions (i), (ii), (iii) and the first assertion of (iv)
follow as in \cite{kempf}.

The final assertion of (iv) is proved as follows.
Fix $\lambda\in\Omega(X,S,k)$.
Let $g \in N_{G(k)}(X)$.
Then $g\cdot \Omega(X,S,k)=\Omega(g\cdot X,g\cdot S,k)=\Omega(X,S,k)$,
by (iii) (note that $g\cdot S= S$).
So $g\cdot\lambda\in\Omega(X,S,k)$.
By the first assertion of (iv), $g\cdot\lambda=u\cdot\lambda$ for some
$u\in R_u(P(X,S,k))(k)$.
So $u^{-1}g\in C_G(\lambda(\ovl{k}^*)) = L_\lambda\subseteq P(X,S,k)$
and therefore
$g\in P(X,S,k)\cap G(k)=P(X,S,k)(k)$.
\end{proof}

\begin{defn}
\label{def:optimalparabolic}
We call $P(X,S,k)$ from Theorem \ref{thm:kempfrousseau} the
\emph{optimal destabilizing R-parabolic subgroup for $X$
with respect to $S$ over $k$}.  It is clear that $P(X,S,k)$ is a proper subgroup of $G$ if and only if $X\not\subseteq S$.  If $k$ is algebraically closed, then we often suppress the $k$ argument and write simply $\Omega(X,S)$ and $P(X,S)$.
\end{defn}

Next we discuss rationality properties of this construction.
If $X$ is uniformly $S$-unstable over $k$ and $k_1/k$
is a field extension, then $X$ is uniformly $S$-unstable
over $k_1$.  We want to investigate the relationship
between $P(X,S,k)$ and $P(X,S,k_1)$.  It appears
that one can say little in general if $k_1/k$ is
not separable, so we consider the special case when $k_1= k_s$.
We denote the $k$-closure of $X$ by $X^k$, cf.\ \cite[AG.11.3]{Bo}.
We obtain a rationality result as in \cite[Thm.\ 5.5]{He}.

We now choose a $G'$-invariant norm $\left\|\,\right\|$ on $Y(G)$. Note that this determines a $G(k_1)$-invariant norm on $Y_{k_1}(G)$ for any subfield $k_1$ of $\ovl k$.

\begin{thm}
\label{thm:kr-rationality}
Assume that $V$ is an affine $k$-variety
and that $S$ and the action of $G$ on $V$
are $k$-defined. Let $X$ be a non-empty subset of $V$.
Then the following hold:
\begin{enumerate}[{\rm(i)}]
\item $X$ is uniformly $S$-unstable over $k$
if and only if $X^k$ is uniformly $S$-unstable over $k_s$.
\item Assume that $X$ is uniformly $S$-unstable
over $k$ and that the norm $\left\|\,\right\|$ on $Y(G)$ is $k$-defined.
Then $\Omega(X,S,k)$ consists of the
$k$-defined cocharacters in $\Omega(X^k,S,k_s)$.
In particular, the cocharacters in $\Omega(X,S,k)$
are optimal for $X^k$ over $k_s$.
\end{enumerate}
\end{thm}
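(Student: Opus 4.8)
The plan is to follow the strategy of Hesselink's rationality theorem \cite[Thm.~5.5]{He}, adapting it to the uniform $S$-instability framework set up in this section. The key point throughout is that both the function $\lambda\mapsto a_{S,X}(\lambda)$ and the norm $\|\cdot\|$ behave well under the Galois action, so that the optimization problem over $k_s$ is $\Gamma$-equivariant and descends to $k$. Before starting, I would record two equivariance facts. First, for $\gamma\in\Gamma$, $x\in V$ and $\lambda\in Y_{k_s}(G)$ we have $a_{S,\gamma\cdot x}(\gamma\cdot\lambda)=a_{S,x}(\lambda)$: this is because $S$ is $k$-defined, so $\Gamma$ permutes the relevant scheme-theoretic preimages $\varphi^{-1}(S)$ and preserves the length $m$ of the local ring $\ovl{k}[T]/(T^m)$. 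Taking minima, $a_{S,\gamma\cdot X}(\gamma\cdot\lambda)=a_{S,X}(\lambda)$ for any subset $X$, and in particular $a_{S,X^k}(\gamma\cdot\lambda)=a_{S,X^k}(\lambda)$ since $X^k$ is $\Gamma$-stable. Second, since the norm on $Y(G)$ is assumed $k$-defined (i.e. $\Gamma$-invariant) in part (ii), we have $\|\gamma\cdot\lambda\|=\|\lambda\|$ for all $\gamma\in\Gamma$.

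For part (i), the forward direction is the easy one: if $\lambda\in\Lambda(X,k)$ destabilizes $X$ into $S$, then, since $\lambda$ is $k$-defined and the condition ``$\lim_{a\to0}\lambda(a)\cdot x\in S$'' defines a $k$-closed condition on $x$, the same $\lambda$ destabilizes the $k$-closure $X^k$ into $S$; and $\lambda\in\Lambda(X^k,k)\subseteq\Lambda(X^k,k_s)$, so $X^k$ is uniformly $S$-unstable over $k_s$. For the converse, suppose $\mu\in\Lambda(X^k,k_s)$ destabilizes $X^k$ into $S$. Replacing $\mu$ by a positive multiple if necessary, and then averaging over its finitely many $\Gamma$-conjugates — here one should pick a common $k$-split $k$-torus of the (finitely many) $k_s$-defined R-parabolics $P_{\gamma\cdot\mu}$, using conjugacy of maximal $k$-split tori and the fact that two $k$-defined parabolics share a maximal $k$-split torus, exactly as in the proof of Theorem \ref{thm:kempfrousseau} — one obtains a cocharacter $\nu=\sum_{\gamma\in\Gamma}\gamma\cdot\mu'$ (with $\mu'$ a suitable conjugate of $\mu$) lying in $Y_k(G)$. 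By the support/weight computation of Remark \ref{rem:linear} (applied after embedding $V$ $G$-equivariantly in a $k$-defined rational $G$-module), $\nu$ still satisfies $\lim_{a\to0}\nu(a)\cdot x\in S$ for all $x\in X$, because each $\gamma\cdot\mu'$ does and the ``$\geq 0$ on all of $\mathrm{supp}$'' conditions simply add up. Hence $X$ is uniformly $S$-unstable over $k$.

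For part (ii), write $d_k=\max_{\lambda\in\Lambda(X,k)\setminus\{0\}}a_{S,X}(\lambda)/\|\lambda\|$ and $d_{k_s}$ for the analogous maximum over $k_s$ for $X^k$ (both finite and positive by Theorem \ref{thm:kempfrousseau} once part (i) is in hand, noting $a_{S,X^k}=a_{S,X}$ on $k$-defined cocharacters since $X$ is dense in $X^k$ for the $k$-topology). Since $Y_k(G)\subseteq Y_{k_s}(G)$ and $a_{S,X}(\lambda)=a_{S,X^k}(\lambda)$ there, we get $d_k\leq d_{k_s}$. For the reverse inequality I would take any $\lambda\in\Omega(X^k,S,k_s)$ and apply the $\Gamma$-averaging argument of part (i) to it: the resulting $k$-defined $\nu$ has $\|\nu\|\leq\sum_\gamma\|\gamma\cdot\lambda\|=|\Gamma\cdot\lambda|\cdot\|\lambda\|$ while $a_{S,X}(\nu)\geq$ (a comparable multiple of) $a_{S,X^k}(\lambda)$ — here one needs that the $\Gamma$-conjugates of $\lambda$ all give the \emph{same} R-parabolic $P_\lambda$ (true because $\Omega(X^k,S,k_s)$ lies in a single R-parabolic by Theorem \ref{thm:kempfrousseau}(ii), hence $P_{\gamma\cdot\lambda}=\gamma\cdot P_\lambda=P_\lambda$ using that $P(X^k,S,k_s)$ is $\Gamma$-stable), so the weight computation shows $a_{S,X}\big(\sum_\gamma\gamma\cdot\lambda\big)=\sum_\gamma a_{S,X}(\gamma\cdot\lambda)=|\Gamma\cdot\lambda|\cdot a_{S,X^k}(\lambda)$ exactly. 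Thus $a_{S,X}(\nu)/\|\nu\|\geq a_{S,X^k}(\lambda)/\|\lambda\|=d_{k_s}$, forcing $d_k=d_{k_s}$ and showing that the indivisible part of such a $\nu$ lies in $\Omega(X,S,k)$. Conversely, any $\lambda\in\Omega(X,S,k)$ is $k$-defined, has $a_{S,X^k}(\lambda)/\|\lambda\|=d_k=d_{k_s}$, and is indivisible, so lies in $\Omega(X^k,S,k_s)$; this gives the claimed equality $\Omega(X,S,k)=\{$ $k$-defined cocharacters in $\Omega(X^k,S,k_s)\}$, and optimality for $X^k$ over $k_s$ is then immediate.

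The main obstacle I anticipate is the $\Gamma$-averaging step, specifically verifying cleanly that averaging a destabilizing (resp. optimal) cocharacter over its Galois orbit preserves the destabilizing property and behaves predictably on the numerator $a_{S,X}$. This is where one must be careful to first pass to a common $k$-split torus contained in the common R-parabolic of all the $\Gamma$-conjugates, so that the weight-space description of limits in Remark \ref{rem:linear} applies and the additivity $a_{S,X}(\sum_\gamma\gamma\cdot\lambda)=\sum_\gamma a_{S,X}(\gamma\cdot\lambda)$ can be justified via the scheme-theoretic length computation; everything else is a routine comparison of maxima.
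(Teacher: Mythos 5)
Your reduction to $X^k$ via the $\Gamma$-equivariance of $a_{S,\cdot}$ and the $k$-closedness of $\{x\mid \lambda\in\Lambda(x),\,a_{S,x}(\lambda)\ge r\}$, and the easy direction of (i), agree with the paper. The genuine gap is the $\Gamma$-averaging step, and it is fatal in the converse of (i). To form $\nu=\sum_\gamma\gamma\cdot\mu'$ you need all the Galois conjugates to lie in a single torus \emph{and} to constitute a full $\Gamma$-orbit inside a $\Gamma$-stable torus, otherwise the sum is neither defined nor $\Gamma$-fixed (hence not $k$-defined). The fact you invoke --- a common maximal $k$-split torus for the family $P_{\gamma\cdot\mu}$ --- is only available for \emph{two} parabolic subgroups (\cite[V.20.7~Prop.]{Bo}); already three Borel subgroups of $\SL_2$ in general position share no maximal torus. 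Moreover, for an arbitrary (non-optimal) destabilizing $\mu$ the parabolic $P_\mu$ is merely $k_s$-defined, not $\Gamma$-stable, so it need not contain any $k$-defined maximal torus, and conjugating each $\gamma\cdot\mu$ separately into some common torus destroys the orbit structure; this is precisely why the averaging trick of Lemma \ref{lem:Rparrat}(ii) (which rests on $P_\lambda$ being $k$-defined, hence $\Gamma$-stable) does not transfer. A secondary error: the claimed equality $a_{S,X}\bigl(\sum_\gamma\gamma\cdot\lambda\bigr)=\sum_\gamma a_{S,X}(\gamma\cdot\lambda)$ is false in general, since $a_{S,x}(\lambda)$ is a minimum over a set of weights and only superadditivity holds; that flaw happens to be in the harmless direction, but it signals that the ``exact'' weight computation you lean on is not there.

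The paper avoids averaging altogether and descends via the canonicity of the optimal class, following \cite[Thm.~4.2]{kempf} and \cite[Thm.~5.5]{He}: choose a $k$-defined norm (one always exists) for part (i); by $\Gamma$-invariance of the data and the norm, $\Omega(X^k,S,k_s)$ is $\Gamma$-stable, hence $P(X^k,S,k_s)$ is $\Gamma$-stable and $k_s$-defined, so $k$-defined by the Galois criterion; it therefore contains a $k$-defined maximal torus $T$, and the uniqueness in Theorem \ref{thm:kempfrousseau}(iv) (one optimal cocharacter per R-Levi, applied to the R-Levi containing $T$) forces that optimal cocharacter to be fixed by $\Gamma$, hence $k$-defined. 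This proves the converse of (i), and the comparison of maxima you carry out at the start and end of your part (ii) then gives (ii) for the given $k$-defined norm. Note that your own observation in (ii) that all $\gamma\cdot\lambda$ determine the same ($\Gamma$-stable) parabolic already makes averaging pointless: after conjugating $\lambda$ into a $k$-defined maximal torus of that parabolic --- which stays inside $\Omega(X^k,S,k_s)$, since $P(k_s)$-conjugation preserves the optimal class --- uniqueness makes the resulting cocharacter $\Gamma$-fixed outright, which is exactly the argument you should run in place of the sum.
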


\begin{proof}
The embedding $V\hookrightarrow V_0$ and the morphism $f:V\to W$ of
the proof of Theorem~\ref{thm:kempfrousseau} can chosen to be
defined over $k$, see \cite[I.1.9]{Bo} and the proof of \cite[Lem.~1.1]{kempf}. One can then easily check
that for $\lambda\in Y_k(G)$ and any integer $r$,
\begin{equation}
\label{eqn:semicty}
 \mbox{the set $\{x\in V \mid \lambda\in\Lambda(x),a_{S,x}(\lambda)\ge r\}$ is
$k$-closed,}
\end{equation}
cf.\ the proof of \cite[Thm.\ 5.5]{He}.
It follows that $\Lambda(X,k)=\Lambda(X^k,k)$ and
that $a_{S,X}(\lambda)=a_{S,X^k}(\lambda)$ for all $\lambda\in\Lambda(X,k)$.

So we may assume that $X$ is $k$-closed.  We have to show that if $X$ is uniformly $S$-unstable over $k_s$, then $\Omega(X,S,k_s)$ contains a $k$-defined cocharacter.
If $Z$ is a $k$-variety (over $\ovl{k}$), then $\Gamma = \Gal(k_s/k)$
acts on the set
$Z$ and the $k$-closed subsets of $Z$ are the $\Gamma$-stable closed
subsets of $Z$; see \cite[11.2.8(ii)]{spr2}. Furthermore, if $Z_1$ and
$Z_2$ are $k$-varieties, then $\Gamma$ acts on the $k_s$-defined
morphisms from $Z_1$ to $Z_2$ and such a morphism is $k$-defined
if and only if it is fixed by $\Gamma$; see \cite[11.2.9]{spr2}.
So in our case $\Gamma$ acts on the sets $G$ and $V$ and $X$ is
$\Gamma$-stable. Now we can finish the proof as in
\cite[Thm.~4.2]{kempf} or \cite[Thm.~5.5]{He}.
\end{proof}

\begin{cor}
\label{cor:sepopt}
 Suppose the hypotheses of Theorem \ref{thm:kr-rationality} hold and that $\left\|\,\right\|$ is $k$-defined.  Let $k_1/k$ be a separable algebraic extension. Then $X$ is uniformly $S$-unstable over $k$ if and only if $X^k$ is uniformly $S$-unstable over $k_1$, and in this case we have $\Omega(X,S,k)= \Omega(X^k,S,k_1)\cap Y_k(G)$ and $P(X,S,k)= P(X^k,S,k_1)$.
\end{cor}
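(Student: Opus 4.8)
The plan is to deduce the corollary from Theorem~\ref{thm:kr-rationality} by applying that result twice: once with base field $k$ and once with base field $k_1$. Since $k_1/k$ is separable algebraic, we may assume $k\subseteq k_1\subseteq k_s$, and then $k_s$ is also a separable closure of $k_1$, so $(k_1)_s=k_s$; moreover $\Gamma_1:=\Gal(k_s/k_1)$ is a subgroup of $\Gamma=\Gal(k_s/k)$. First I would verify that the hypotheses of Theorem~\ref{thm:kr-rationality} pass from $k$ to $k_1$: $V$ is an affine $k_1$-variety, $S$ and the $G$-action on $V$ are $k_1$-defined because they are $k$-defined, and the norm $\left\|\,\right\|$ on $Y(G)$ is $k_1$-defined because, being $k$-defined, it is $\Gamma$-invariant and hence $\Gamma_1$-invariant. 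I would also record two elementary facts: $X^k$ is $k_1$-closed (a $\Gamma$-stable closed set is $\Gamma_1$-stable and closed), so that $(X^k)^{k_1}=X^k$; and $Y_k(G)\subseteq Y_{k_1}(G)\subseteq Y_{k_s}(G)$.

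For the instability statement, Theorem~\ref{thm:kr-rationality}(i) over $k$ says that $X$ is uniformly $S$-unstable over $k$ if and only if $X^k$ is uniformly $S$-unstable over $k_s$, while the same result over $k_1$, applied to $X^k$ and using $(X^k)^{k_1}=X^k$ together with $(k_1)_s=k_s$, says that $X^k$ is uniformly $S$-unstable over $k_1$ if and only if $X^k$ is uniformly $S$-unstable over $k_s$. Combining these gives the claimed equivalence.

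Now suppose $X$ is uniformly $S$-unstable over $k$, equivalently $X^k$ is uniformly $S$-unstable over $k_1$. Theorem~\ref{thm:kr-rationality}(ii) over $k$ gives $\Omega(X,S,k)=\Omega(X^k,S,k_s)\cap Y_k(G)$, and over $k_1$ it gives $\Omega(X^k,S,k_1)=\Omega(X^k,S,k_s)\cap Y_{k_1}(G)$; intersecting the latter with $Y_k(G)$ and using $Y_k(G)\subseteq Y_{k_1}(G)$ yields $\Omega(X^k,S,k_1)\cap Y_k(G)=\Omega(X^k,S,k_s)\cap Y_k(G)=\Omega(X,S,k)$, as required. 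For the R-parabolic subgroups, Theorem~\ref{thm:kempfrousseau} ensures $\Omega(X,S,k)$ is non-empty; choosing $\lambda$ in it, we have $\lambda\in\Omega(X^k,S,k_1)$ by the displayed equality, so Theorem~\ref{thm:kempfrousseau}(ii) gives $P(X,S,k)=P_\lambda=P(X^k,S,k_1)$. I do not anticipate a serious obstacle: the only points requiring care are the descent of the $k$-structures, the norm, and the separable closure to $k_1$, and the handling of the inclusions among the groups $Y_{k'}(G)$; the degenerate case $X\subseteq S$, where every $\Omega$ equals $\{0\}$ and every $P$ equals $G$, is subsumed in the same chain of equalities.
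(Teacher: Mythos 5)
Your proof is correct and is exactly the argument the paper intends: the corollary is stated without proof as an immediate consequence of Theorem~\ref{thm:kr-rationality}, applied once over $k$ and once over $k_1$, using $(k_1)_s=k_s$, $(X^k)^{k_1}=X^k$, the $k_1$-definedness of the data and the norm, and the inclusion $Y_k(G)\subseteq Y_{k_1}(G)$, with the parabolic equality then following from Theorem~\ref{thm:kempfrousseau}(ii). Your care with the degenerate case $X\subseteq S$ and the descent of hypotheses to $k_1$ fills in precisely the routine verifications the paper leaves implicit.
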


\begin{rems}
\label{rems:optimal}
(i). Hesselink's optimal class consists
in general of virtual cocharacters, since,
essentially, he requires $a_{S,X}(\lambda)=1$
(he minimizes the norm). We work with Kempf's
optimal class which consists of indivisible
cocharacters. There is an obvious bijection
between the two optimal classes.

(ii). If $k$ is not perfect,
then $X$ can be $S$-unstable
over $\ovl{k}$ but need not be $S$-unstable over $k$
(see Remark \ref{rem:obstacle}).  Even when $X$ is $S$-unstable over both $k$ and $\ovl{k}$, our methods do not tell us whether or not $P(X,S,k)= P(X,S)$ when $k$ is not perfect.

(iii). Assume that $k$ is perfect and that
$X=\{v\}$ with $v$ a $k$-point of $V$ outside $S$
whose $G$-orbit closure meets $S$.
Then Corollary \ref{cor:sepopt} gives
the existence of a $k$-defined destabilizing
cocharacter for $v$ and $S$ which is optimal over $\ovl{k}$.
This was first proved by Kempf in \cite[Thm.\ 4.2]{kempf}.
\end{rems}

Corollary \ref{cor:perfectorbitcrit} below and
Corollary \ref{cor:sepopt} answer
Question \ref{qn:cocharclsdext} for perfect $k$.

\begin{cor}
\label{cor:perfectorbitcrit}
Suppose that $k$ is perfect.
Let $V$ be an affine $G$-variety over $k$. Let $v\in V(k)$.
Then $G\cdot v$ is closed if and only if
$G(k)\cdot v$ is cocharacter-closed over $k$.
\end{cor}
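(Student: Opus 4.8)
The plan is to prove the two implications separately, using the results accumulated so far. For the forward direction, suppose $G\cdot v$ is closed; I want to show $G(k)\cdot v$ is cocharacter-closed over $k$. Take any $\lambda\in Y_k(G)$ such that $v':=\lim_{a\to 0}\lambda(a)\cdot v$ exists. Since $G\cdot v$ is closed, $v'$ lies in $\ovl{G\cdot v}=G\cdot v$, so $v'$ is $G$-conjugate to $v$. By Theorem \ref{thm:Ruconj} (which applies since $k$ is perfect), $v'$ is in fact $R_u(P_\lambda)(k)$-conjugate to $v$, hence certainly $G(k)$-conjugate to $v$. As $\lambda$ was arbitrary, $G(k)\cdot v$ is cocharacter-closed over $k$.

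For the reverse direction, suppose $G(k)\cdot v$ is cocharacter-closed over $k$ but, for contradiction, that $G\cdot v$ is not closed. Then $\ovl{G\cdot v}$ contains a unique closed orbit $S$, which is a $G$-stable closed subvariety meeting $\ovl{G\cdot v}$; since $S$ is $G$-stable and $\Gamma$ acts on $V$ through $G$-equivariant-compatible maps, $S$ is $k$-defined (being a closed $\Gamma$-stable subvariety). Now $v\in V(k)$ and $v\notin S$ (as $G\cdot v$ is not closed, so $v$ lies in an open non-closed orbit in its closure), and $\ovl{G\cdot v}\cap S\neq\varnothing$, so $\{v\}$ is $S$-unstable over $\ovl{k}$ by the Hilbert–Mumford Theorem. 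Since $k$ is perfect, $k_s=\ovl{k}$, and Corollary \ref{cor:sepopt} (applied with $X=\{v\}$, $k_1=k_s=\ovl k$) gives a $k$-defined destabilizing cocharacter $\lambda\in Y_k(G)$ for $v$ with respect to $S$; that is, $v'':=\lim_{a\to 0}\lambda(a)\cdot v$ exists and lies in $S$. But $S\subseteq\ovl{G\cdot v}$ and $S\neq G\cdot v$ (since $S$ is closed and $G\cdot v$ is not), so $v''\notin G\cdot v$, hence $v''$ is not $G(k)$-conjugate to $v$. This contradicts the assumption that $G(k)\cdot v$ is cocharacter-closed over $k$ via the $k$-defined cocharacter $\lambda$. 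Therefore $G\cdot v$ is closed.

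The routine points to verify carefully are: that $\{v\}$ being a single $k$-point satisfies the hypotheses of Corollary \ref{cor:sepopt} (this needs $V$, $S$ and the $G$-action $k$-defined, and here one must also fix a $k$-defined norm on $Y(G)$, which exists by Definition \ref{def:norm}); that $S$ is genuinely $k$-defined, which I would justify by noting that $S$ is the unique closed orbit in the $\Gamma$-stable set $\ovl{G\cdot v}$ and is therefore itself $\Gamma$-stable and closed, hence $k$-defined by the Galois criterion recalled in Section \ref{sec:prelims}; and that $v\notin S$, which follows because $v$ lies in the open dense orbit of $\ovl{G\cdot v}$ while $S$ is a proper closed subvariety. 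I expect the main obstacle to be organizing the reverse direction cleanly: one must produce the $k$-defined destabilizing cocharacter by invoking Kempf–Hesselink optimality over the perfect (hence separably closed) ground field, and this is exactly what Corollary \ref{cor:sepopt} packages, so the real content of the corollary is that the perfectness hypothesis lets Theorem \ref{thm:Ruconj} and the rationality of the optimal destabilizing cocharacter both apply simultaneously.
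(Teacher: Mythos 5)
Your overall strategy is the paper's: the forward implication via the rational $R_u(P_\lambda)$-conjugacy results of Section \ref{sec:orbrat}, the reverse implication via the Hilbert--Mumford theorem plus the rationality of uniform $S$-instability over a perfect field. But the forward direction as written rests on a misapplication. You invoke Theorem \ref{thm:Ruconj} knowing only that $v'$ is $G$-conjugate (i.e.\ $G(\ovl{k})$-conjugate) to $v$, whereas the hypothesis of that theorem is that $v'$ is $G(k)$-conjugate to $v$. Passing from $G$-conjugacy to $G(k)$-conjugacy of two $k$-points is not automatic --- see the $\SL_2$ example over ${\mathbb R}$ in Remark \ref{rem:cocharclosure}(ii) --- and this descent is exactly the nontrivial content of the implication you are proving, so it cannot be absorbed into the citation. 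The repair is available in the paper: apply Theorem \ref{thm:Ruconj} over $\ovl{k}$ (which is perfect) to conclude that $v'$ is $R_u(P_\lambda)$-conjugate to $v$, and then, since $v\in V(k)$ forces $C_{G(k_s)}(v)$ to be $\Gamma$-stable and $k_s=\ovl{k}$ because $k$ is perfect, apply Theorem \ref{thm:sepRuconj} to descend to $R_u(P_\lambda)(k)$-conjugacy. This two-step argument is precisely (the contrapositive of) Corollary \ref{cor:perfectclsd}, which is the result the paper quotes for this direction; citing that corollary instead of Theorem \ref{thm:Ruconj} makes your forward direction correct.

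Your reverse direction is essentially the paper's proof: the $\Gamma$-stability of the unique closed orbit $S\subseteq\ovl{G\cdot v}$, its $k$-definedness (which does use perfectness of $k$, as you note), and the production of a $k$-defined cocharacter destabilizing $v$ into $S$, contradicting cocharacter-closedness since $S\cap G\cdot v=\varnothing$. The only cosmetic difference is that you route the last step through Corollary \ref{cor:sepopt} (which requires fixing a $k$-defined norm), while the paper uses Theorem \ref{thm:kr-rationality}(i) directly; the existence of a $k$-defined destabilizing cocharacter needs no norm, so the paper's citation is marginally more economical, but your version is also valid.
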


\begin{proof}
If $G(k)\cdot v$ is not cocharacter-closed over $k$,
then $G\cdot v$ is not closed, by Corollary \ref{cor:perfectclsd}.
Conversely, suppose $G\cdot v$ is not closed.
Let $S$ be the unique closed $G$-orbit in $\ovl{G\cdot v}$.
Then $\ovl{G\cdot v}$ is $k$-defined (see, e.g.,\ \cite[1.9.1]{spr2}).
Let $\gamma\in\Gamma$. Then $\gamma(S)$ is a closed $G$-orbit which is
contained in $\ovl{G\cdot v}$, so it is equal to $S$. It follows that
$S$ is $\Gamma$-stable and therefore $k$-defined, since $k$ is perfect.
Now $v$ is $S$-unstable by the Hilbert-Mumford Theorem and therefore
$S$-unstable over $k$, by Theorem~\ref{thm:kr-rationality}(i).
Since $S\cap G\cdot v=\varnothing$, it is clear that
$G(k)\cdot v$ is not cocharacter-closed over $k$.
\end{proof}

\section{Applications to $G$-complete reducibility}
\label{sec:appl-gcr}
In this section we discuss some applications of the theory
developed in this paper,
with particular reference to Serre's concept of $G$-complete reducibility.
We briefly recall the definitions here; for more
details, see \cite{BMR}, \cite{serre2}.

\begin{defn}
\label{def:gcr}
A subgroup $H$ of $G$ is said to be \emph{$G$-completely reducible ($G$-cr)}
if whenever $H$ is contained in an R-parabolic subgroup $P$ of $G$,
there exists an R-Levi subgroup $L$ of $P$ containing $H$.
Similarly, a subgroup $H$ of $G$ is said to be
\emph{$G$-completely reducible over $k$}
if
whenever $H$ is contained in a $k$-defined
R-parabolic subgroup $P$ of $G$,
there exists a $k$-defined R-Levi subgroup $L$ of $P$ containing $H$.
\end{defn}

We have noted (Remark \ref{rem:krpars})
that not every $k$-defined R-parabolic subgroup of $G$
need stem from a cocharacter in $Y_k(G)$.
However, our next result shows that when considering questions of $G$-complete
reducibility over $k$, it suffices just to look at $k$-defined
R-parabolic subgroups of $G$ of the form $P_\lambda$
with $\lambda \in Y_k(G)$.

\begin{lem}
\label{lem:Gcroverk}
Let $H$ be a subgroup of $G$.
Then $H$ is $G$-completely reducible over $k$ if and only if
for every $\lambda \in Y_k(G)$ such that $H$ is contained in $P_\lambda$,
there exists $\mu \in Y_k(G)$ such that
$P_\lambda = P_\mu$ and $H \subseteq L_\mu$.
\end{lem}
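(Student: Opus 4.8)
The forward implication is immediate: if $H$ is $G$-cr over $k$ and $\lambda \in Y_k(G)$ satisfies $H \subseteq P_\lambda$, then since $P_\lambda$ is a $k$-defined R-parabolic subgroup (by Lemma \ref{lem:Rparrat}(i)), there exists a $k$-defined R-Levi subgroup $M$ of $P_\lambda$ with $H \subseteq M$; by Corollary \ref{cor:kLevi} applied with $\mu$ such a Levi, one finds $\mu \in Y_k(G)$ with $P_\lambda = P_\mu$ and $L_\mu = M \supseteq H$. So the real content is the reverse implication.

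For the reverse direction, assume that for every $\lambda \in Y_k(G)$ with $H \subseteq P_\lambda$ there is $\mu \in Y_k(G)$ with $P_\lambda = P_\mu$ and $H \subseteq L_\mu$. Let $P$ be an arbitrary $k$-defined R-parabolic subgroup of $G$ containing $H$; we must produce a $k$-defined R-Levi subgroup of $P$ containing $H$. The plan is to use Lemma \ref{lem:Rparrat}(ii) to find $\lambda \in Y_k(G)$ with $P \subseteq P_\lambda$ and $P^0 = P_\lambda^0$. Since $H \subseteq P \subseteq P_\lambda$, the hypothesis yields $\mu \in Y_k(G)$ with $P_\mu = P_\lambda$ and $H \subseteq L_\mu$. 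Now $L_\mu$ is a $k$-defined R-Levi subgroup of $P_\lambda$ (by Lemma \ref{lem:Rparrat}(i)), and I want to intersect it down to $P$: by Lemma \ref{lem:Levidown}, $P \cap L_\mu$ is an R-Levi subgroup of $P$. It clearly contains $H$, since $H \subseteq P$ and $H \subseteq L_\mu$.

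The remaining point — and the step I expect to require the most care — is to check that $P \cap L_\mu$ is \emph{$k$-defined}. For this I would imitate the density argument in the proof of Lemma \ref{lem:Rparrat}(iii): both $P \cap G(k_s)$ and $L_\mu \cap G(k_s)$ are $\Gamma$-stable (the former since $P$ is $k$-defined, the latter since $\mu \in Y_k(G)$), hence so is $P \cap L_\mu \cap G(k_s)$; and this set is dense in $P \cap L_\mu$ because the irreducible components of $P \cap L_\mu$ are components of $L_\mu$ (as $P^0 = P_\lambda^0 \supseteq (P\cap L_\mu)^0$ forces $R_u(P) = R_u(P_\lambda) = R_u(P_\mu)$ to be complementary to $L_\mu$ inside $P_\lambda$, so $P \cap L_\mu$ meets $R_u(P_\mu)$ trivially and has the same dimension as the R-Levi subgroups of $P$) and the separable points are dense in each component of $L_\mu$. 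By the Galois criterion for $k$-definedness, $P \cap L_\mu$ is then $k$-defined, and it is the desired $k$-defined R-Levi subgroup of $P$ containing $H$. This establishes that $H$ is $G$-cr over $k$ and completes the proof.
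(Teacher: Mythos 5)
Your proposal is correct and follows essentially the same route as the paper: the forward direction via Corollary \ref{cor:kLevi}, and the reverse direction by passing from a $k$-defined R-parabolic $P$ to $P_\lambda$ with $\lambda\in Y_k(G)$, $P\subseteq P_\lambda$, $P^0=P_\lambda^0$ (Lemma \ref{lem:Rparrat}(ii)), invoking the hypothesis to get $\mu$, and cutting $L_\mu$ down to the R-Levi $P\cap L_\mu$ of $P$ via Lemma \ref{lem:Levidown}. The only (harmless) deviation is that the paper realizes this Levi explicitly as $L_{u\cdot\sigma}$ using $R_u(P)$-conjugacy of R-Levi subgroups and quotes Lemma \ref{lem:Rparrat}(iii) for its $k$-definedness, whereas you rerun the Galois-criterion density argument from the proof of that lemma directly.
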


\begin{proof}
Assume that for every $\lambda \in Y_k(G)$ such that $H$ is contained in $P_\lambda$,
there exists $\mu \in Y_k(G)$ such that $P_\lambda = P_\mu$ and $H \subseteq L_\mu$.
Let $\sigma\in Y(G)$ such that $P_\sigma$ is $k$-defined and $H\subseteq P_\sigma$.
After conjugating $\sigma$ by an element of $P_\sigma$, we may assume that
$\sigma\in Y(T)$ for some $k$-defined maximal torus $T$ of $P_\sigma$.
By Lemma~\ref{lem:Rparrat}(ii), there exists $\lambda\in Y_k(T)$ such that
$P_\sigma\subseteq P_\lambda$ and $P_\sigma^0=P_\lambda^0$.
Note that $L_\sigma=L_\lambda\cap P_\sigma$, by Lemma \ref{lem:Levidown}.
By assumption, there exists $\mu \in Y_k(G)$ such that $P_\lambda=P_\mu$ and $H \subseteq L_\mu$.  There exists $u\in R_u(P_\lambda)=R_u(P_\sigma)$ such that
$uL_\lambda u^{-1}=L_\mu$.
But then $L_{u\cdot\sigma}= uL_\sigma u^{-1}= u(L_\lambda\cap P_\sigma)u^{-1}=L_\mu\cap P_\sigma$ contains $H$. By Lemma~\ref{lem:Rparrat}(iii),
$L_{u\cdot\sigma}$ is $k$-defined, since
$L_{u\cdot\sigma}^0=L_\mu^0$ is $k$-defined.  Hence $H$ is $G$-cr over $k$.
The other implication follows from Corollary~\ref{cor:kLevi}.
\end{proof}

\begin{rem}
If $k$ is algebraically closed
(or even perfect, see \cite[Thm.~5.8]{BMR}) and $H$ is $k$-defined, then $H$
is $G$-cr over $k$ if and only if $H$ is $G$-cr.
\end{rem}

\subsection{Geometric criteria for $G$-complete reducibility}
\label{subsec:serre}
In \cite{BMR}, we show that $G$-complete reducibility has a geometric
interpretation in terms of the action of $G$ on $G^n$, the $n$-fold
Cartesian product of $G$ with itself, by simultaneous conjugation.
Let $\mathbf{h} \in G^n$
and let $H$ be the algebraic subgroup of $G$ generated by $\tuple{h}$.
Then $G\cdot\mathbf{h}$ is closed in $G^n$ if and only if
$H$ is $G$-cr \cite[Cor.\ 3.7]{BMR}.
To generalize this to subgroups that are not topologically finitely
generated, we need the following concept.

\begin{defn}
\label{def:generictuple}
Let $H$ be a subgroup of $G$ and let $G\hookrightarrow\GL_m$
be an embedding of algebraic groups.
Then $\tuple{h} \in H^n$ is called a
\emph{generic tuple of $H$ for the embedding $G\hookrightarrow\GL_m$}
if $\tuple{h}$ generates the associative subalgebra of $\Mat_m$
spanned by $H$. We call $\tuple{h}\in H^n$ a \emph{generic tuple of $H$}
if it is a generic tuple of $H$ for some embedding $G\hookrightarrow\GL_m$.
\end{defn}

Clearly, generic tuples exist for any embedding $G\hookrightarrow\GL_m$ if
$n$ is sufficiently large. The next lemma gives the main properties of
generic tuples.

\begin{lem}
\label{lem:generictuple}
Let $H$ be a subgroup of $G$,
let $\tuple{h}\in H^n$ be a generic tuple of $H$
for some embedding $G\hookrightarrow\GL_m$ and let
$H'$ be the algebraic subgroup of $G$ generated by $\tuple{h}$.
Then we have:
\begin{enumerate}[{\rm(i)}]
\item $C_M(\tuple{h})=C_M(H')=C_M(H)$ for any subgroup $M$ of $G$;
\item $H'$ is contained in the same R-parabolic
and the same R-Levi subgroups of $G$ as $H$;
\item If $H\subseteq P_\lambda$ for some $\lambda\in Y(G)$,
then $c_\lambda(\tuple{h})$ is a generic tuple of $c_\lambda(H)$
for the given embedding $G\hookrightarrow\GL_m$.
\end{enumerate}
\end{lem}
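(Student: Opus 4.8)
The plan is to exploit the defining property of a generic tuple, namely that the $h_i$ generate, as an associative algebra, the full linear span $A := \langle H\rangle \subseteq \Mat_m$ of $H$. First I would prove (i). For any subgroup $M$ of $G$, the inclusions $C_M(H) \subseteq C_M(H') \subseteq C_M(\tuple h)$ are immediate since $\tuple h \in (H')^n$ and $H' \subseteq H$. For the reverse inclusion $C_M(\tuple h) \subseteq C_M(H)$: an element $g \in \GL_m$ commutes with each $h_i$ if and only if it commutes with the whole subalgebra of $\Mat_m$ they generate, which is $A$; but $H \subseteq A$, so $g$ centralizes $H$. Intersecting with $M$ gives $C_M(\tuple h) \subseteq C_M(H)$, and the chain collapses.

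Next I would deduce (ii). The fact that $H'$ lies in the same R-parabolic subgroups as $H$ is the key point, and I would argue it via a linearization. Using a $G$-equivariant embedding of $G$ (acting on itself by conjugation) into a rational $G$-module $W$, an R-parabolic $P_\lambda$ has the property that $g \in P_\lambda$ iff the point of $W$ corresponding to $g$ lies in $W_{\lambda,\ge 0}$, and similarly $g \in L_\lambda$ iff that point lies in $W_{\lambda,0}$. Alternatively, and more cleanly, I would observe that $P_\lambda = \{g \in G \mid \lim_{a\to 0}\lambda(a)g\lambda(a)\inverse \text{ exists}\}$ and that conjugation by $\lambda(a)$ acts on $\Mat_m$; the condition ``$\lim_{a\to 0}\lambda(a) x \lambda(a)\inverse$ exists'' is linear in $x \in \Mat_m$ and closed under products, hence defines an associative subalgebra of $\Mat_m$. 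Since the $h_i$ generate $A \supseteq H$ as such an algebra, $H \subseteq P_\lambda$ forces $A \subseteq$ that subalgebra, hence $H' \subseteq P_\lambda$; conversely $H' \subseteq P_\lambda \Rightarrow H \subseteq A \subseteq P_\lambda$ is false in general, so I must instead note that $H \subseteq P_\lambda$ iff all $h_i \in P_\lambda$ iff (since that algebra condition is equivalent for the generators and for the generated algebra, and $H,H'$ both lie in $A$) $A \subseteq P_\lambda$ in the algebra sense, which is symmetric in $H$ and $H'$. The same argument with $L_\lambda$ (the condition $\lim_{a\to 0}\lambda(a)x\lambda(a)\inverse = x$ is also linear and multiplicative) handles R-Levi subgroups.

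Finally, for (iii): if $H \subseteq P_\lambda$, then $c_\lambda$ restricted to the algebra level is the projection $\Mat_m \to (\Mat_m)_{\lambda,0}$ onto the $\lambda$-fixed part, which is an algebra homomorphism on the subalgebra $(\Mat_m)_{\lambda,\ge 0}$ (as recorded in Remark~\ref{rem:linear}, the limit map is the projection with kernel $(\Mat_m)_{\lambda,>0}$, and one checks it respects products on $(\Mat_m)_{\lambda,\ge 0}$). The image of the associative span $A$ of $H$ under this homomorphism is exactly the associative span of $c_\lambda(H)$; since $c_\lambda(\tuple h)$ generates $c_\lambda(A)$ (homomorphisms send generators to generators), $c_\lambda(\tuple h)$ is a generic tuple of $c_\lambda(H)$ for the composed embedding $c_\lambda(G) \hookrightarrow \GL_m$ (noting $c_\lambda(H) \subseteq c_\lambda(A)$). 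The main obstacle is getting the algebra-level reformulation of membership in $P_\lambda$ and $L_\lambda$ stated correctly and symmetrically in $H$ and $H'$ — once that is in place, parts (i), (ii), (iii) all follow from the single observation that ``the associative algebra generated by $\tuple h$ contains $H$'', combined with the fact that the conjugation-limit conditions cut out subalgebras of $\Mat_m$.
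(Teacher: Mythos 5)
Your proposal is correct and follows essentially the same route as the paper's proof: both pass to the subalgebras $\mathcal{P}_\lambda=\{x\in\Mat_m \mid \lim_{a\to 0}\lambda(a)x\lambda(a)^{-1}\text{ exists}\}$ and $\mathcal{L}_\lambda=C_{\Mat_m}(\lambda(\ovl{k}^*))$, note that $P_\lambda=G\cap\mathcal{P}_\lambda$, $L_\lambda=G\cap\mathcal{L}_\lambda$ and that the limit map is an algebra homomorphism, and then use that centralizing (resp.\ being contained in a subalgebra together with) a generating set is equivalent to the same statement for the generated algebra $A$, which is symmetric in $\tuple{h}$, $H'$ and $H$. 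The mid-argument self-correction in (ii) resolves to exactly the paper's key observation, so no gap remains.
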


\begin{proof}
By assumption, $\tuple{h}$ generates the associative
subalgebra $A$ of $\Mat_m$ spanned by $H$.
For $\lambda\in Y(\GL_m)$ let $\mc P_\lambda$
be the subset of elements $x\in\Mat_m$ such
that $\underset{a \ra 0}{\lim}\, \lambda(a)\cdot x$ exists
and let $\mc L_\lambda$ be the centralizer of $\lambda(k^*)$ in $\Mat_m$.
Denote the limit morphism $\mc P_\lambda\to\mc L_\lambda$ by $c_\lambda$.
The well-known characterization of $\mc P_\lambda$ and $\mc L_\lambda$
in terms of flags of subspaces shows that they are subalgebras of $\Mat_m$ and that
$c_\lambda$ is a homomorphism of algebras. For $\lambda\in Y(G)$
we have $P_\lambda(G)=G\cap\mc P_\lambda$
and $L_\lambda(G)=G\cap\mc L_\lambda$.

(i).\ If a subset $S$ of $\Mat_m$ generates the associative subalgebra
$E$ of $\Mat_m$, then $C_M(S)=M\cap C_{\Mat_m}(E)$.
So $C_M(H)= C_M(H')=M\cap C_{\Mat_m}(A)=C_M(\tuple{h})$.

(ii).\ If a subset $S$ of $G$ generates the associative subalgebra
$E$ of $\Mat_m$, then $S\subseteq P_\lambda(G)$ if and only if
$E\subseteq{\mc P}_\lambda$, and $S\subseteq L_\lambda(G)$
if and only if $E\subseteq{\mc L}_\lambda$. This implies the assertion.

(iii).\ Since $c_\lambda:{\mc P}_\lambda\to{\mc L}_\lambda$ is a
homomorphism of associative algebras, $c_\lambda(\tuple{h})$
generates the associative subalgebra $c_\lambda(A)$ and this is also the
associative subalgebra of $\Mat_m$ generated by $c_\lambda(H)$.
\end{proof}

\begin{rem}
\label{rem:genericgenerators}
If $H$ is a subgroup of $G$ which is topologically
generated by a tuple ${\tuple h}\in H^n$,
then ${\tuple h}$
is a generic tuple of $H$ in the sense of Definition \ref{def:generictuple}.
To see this, consider an embedding $G\hookrightarrow\GL_m$.
Since the minimal polynomial of each $h_i$ has non-zero constant term,
we can express $h_i^{-1}$ as a polynomial in $h_i$.
Hence, if $A$ is the associative subalgebra of $\Mat_m$
generated by $\tuple h$, then $A$ contains the
inverses of each of the components $h_i$,
so it contains the subgroup of $\GL_m$
generated by $\tuple h$. But $A$ is closed, so it contains $H$.
\end{rem}

\begin{rem}
\label{rem:generictuple}
 Let $H$ be a $k$-defined subgroup of $G$.  Even if $H$ is topologically finitely generated, there need not exist a $k$-defined generating tuple.  The notion of a generic tuple lets us get around this problem.  Note that if $\tuple{h}$ is a generic tuple of $H$, then $C_{G(k_s)}(\tuple{h})$ is $\Gamma$-stable by Lemma \ref{lem:generictuple}(i), which is a sufficient condition for many of the results in Section~\ref{sec:orbrat} to hold.  Another advantage of generic tuples is that one can extend the action of $S_n$ on an $n$-tuple by permutation of the components (cf.\ \cite[Thm.~5.8]{BMR}) to an action of ${\rm GL}_n(k)$ (cf. \cite[Sec.~3.8]{BMRT:relative}).
\end{rem}

The connection between $G$-complete reducibility and $G$-orbits of tuples
is made transparent by part (iii) of
the following theorem which is, essentially,
a consequence of Theorem~\ref{thm:Ruconj}. It also shows how
statements about generic tuples can be translated back into
statements about subgroups of $G$.
Note that, in view of Remark \ref{rem:genericgenerators},
the final statement of Theorem \ref{thm:orbclosconjcrit}(iii)
recovers \cite[Cor.\ 3.7]{BMR}.

\begin{thm}\
\label{thm:orbclosconjcrit}
\begin{enumerate}[{\rm (i)}]
\item Let $n\in {\mathbb N}$, let $\tuple{h}\in G^n$ and
let $\lambda\in Y(G)$ such that
$\tuple{m}:=\lim_{a\ra 0}\lambda(a)\cdot \tuple{h}$ exists.
Then the following are equivalent:
\begin{enumerate}[{\rm (a)}]
\item $\tuple{m}$ is $G$-conjugate
to $\tuple{h}$;
\item $\tuple{m}$ is $R_u(P_\lambda)$-conjugate to $\tuple{h}$;
\item $\dim G\cdot\tuple{m}=\dim G\cdot\tuple{h}$.
\end{enumerate}
\item Let $H$ be a subgroup of $G$ and let $\lambda\in Y(G)$.
Suppose $H\subseteq P_\lambda$ and set $M= c_\lambda(H)$.
Then $\dim C_G(M)\geq \dim C_G(H)$ and the following are equivalent:
\begin{enumerate}[{\rm (a)}]
\item $M$ is $G$-conjugate to $H$;
\item  $M$ is $R_u(P_\lambda)$-conjugate to $H$;
\item  $H$ is contained in an R-Levi subgroup of $P_\lambda$;
\item $\dim C_G(M)=\dim C_G(H)$.
\end{enumerate}
\item Let $H$, $\lambda$ and $M$ be as in (ii) and let
$\tuple{h}\in H^n$ be a generic tuple of $H$. Then the
assertions in (i) are equivalent to those in (ii).
In particular, $H$ is $G$-completely reducible
if and only if $G\cdot\tuple{h}$ is closed in $G^n$.
\end{enumerate}
\end{thm}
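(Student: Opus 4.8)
The plan is to establish part~(i) first, working throughout in $V=G^n$ under simultaneous conjugation, and then to obtain parts~(ii) and~(iii) by feeding generic tuples of $H$ into part~(i) via Lemma~\ref{lem:generictuple}. For part~(i), the implications (b)~$\Rightarrow$~(a)~$\Rightarrow$~(c) are immediate, since an $R_u(P_\lambda)$-conjugate is a fortiori a $G$-conjugate and $G$-conjugate tuples have orbits of equal dimension. For (a)~$\Rightarrow$~(b) I would apply Theorem~\ref{thm:Ruconj} with $k=\ovl k$ (an algebraically closed field is perfect) to the $\ovl k$-variety $G^n$ and the point $\tuple h$: the hypothesis that $\tuple m=\lim_{a\to 0}\lambda(a)\cdot\tuple h$ is $G$-conjugate to $\tuple h$ then yields that $\tuple m$ is $R_u(P_\lambda)$-conjugate to $\tuple h$. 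For (c)~$\Rightarrow$~(a) I would use that $\tuple m$ lies in $\ovl{G\cdot\tuple h}$, that an orbit is open in its closure, and hence that every orbit in $\ovl{G\cdot\tuple h}\setminus G\cdot\tuple h$ has strictly smaller dimension than $G\cdot\tuple h$; so the equality of dimensions in~(c) forces $G\cdot\tuple m=G\cdot\tuple h$. This closes the cycle.

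For part~(ii) I would run the cycle (c)~$\Rightarrow$~(b)~$\Rightarrow$~(a)~$\Rightarrow$~(d)~$\Rightarrow$~(c), together with the inequality $\dim C_G(M)\ge\dim C_G(H)$. For (c)~$\Rightarrow$~(b): if $H$ lies in an R-Levi subgroup $L$ of $P_\lambda$, write $L=uL_\lambda u^{-1}$ with $u\in R_u(P_\lambda)$; since $c_\lambda$ restricts to the identity on $L_\lambda$ and kills $R_u(P_\lambda)$, one computes $c_\lambda(h)=u^{-1}hu$ for all $h\in H$, so $M=c_\lambda(H)=u^{-1}Hu$. The implications (b)~$\Rightarrow$~(a) and (a)~$\Rightarrow$~(d) are trivial. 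For the remaining step I would choose a generic tuple $\tuple h\in H^n$ of $H$; by Lemma~\ref{lem:generictuple}(iii), $\tuple m:=c_\lambda(\tuple h)=\lim_{a\to 0}\lambda(a)\cdot\tuple h$ is a generic tuple of $M$, and by Lemma~\ref{lem:generictuple}(i), $C_G(\tuple h)=C_G(H)$ and $C_G(\tuple m)=C_G(M)$. Since $\tuple m\in\ovl{G\cdot\tuple h}$ we get $\dim G\cdot\tuple m\le\dim G\cdot\tuple h$, equivalently $\dim C_G(M)\ge\dim C_G(H)$; and if equality holds then $\dim G\cdot\tuple m=\dim G\cdot\tuple h$, so by part~(i) there is $u\in R_u(P_\lambda)$ with $\tuple m=u\cdot\tuple h$. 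Lemma~\ref{lem:Ruconj} then shows $\mu:=u^{-1}\cdot\lambda$ centralizes $\tuple h$, hence $\mu(\ovl k^*)\subseteq C_G(\tuple h)=C_G(H)$, so $H\subseteq L_\mu$; and $P_\mu=P_\lambda$ because $u^{-1}\in R_u(P_\lambda)\subseteq P_\lambda$. Thus $L_\mu$ is an R-Levi subgroup of $P_\lambda$ containing $H$, which is~(c).

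For part~(iii), with $\tuple h$ a generic tuple of $H$ and $\tuple m=c_\lambda(\tuple h)=\lim_{a\to0}\lambda(a)\cdot\tuple h$, Lemma~\ref{lem:generictuple} identifies condition (i)(c) for $\tuple h$ with condition (ii)(d) for $H$ (both say $\dim C_G(\tuple m)=\dim C_G(\tuple h)$); combined with the internal equivalences proved in~(i) and~(ii), this gives (i)~$\Leftrightarrow$~(ii). For the final assertion, recall that every R-parabolic subgroup of $G$ has the form $P_\sigma$ for some $\sigma\in Y(G)$ and that, by Lemma~\ref{lem:generictuple}(ii), $H\subseteq P_\sigma$ if and only if $\tuple h\in P_\sigma^n$, i.e.\ if and only if $\lim_{a\to0}\sigma(a)\cdot\tuple h$ exists. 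Hence $H$ is $G$-completely reducible if and only if for every such $\sigma$ condition (ii)(c) holds, which by the equivalence (ii)(c)~$\Leftrightarrow$~(i)(a) is the condition that $\lim_{a\to0}\sigma(a)\cdot\tuple h$ is $G$-conjugate to $\tuple h$ whenever it exists; by the Hilbert--Mumford Theorem this is exactly the statement that $G\cdot\tuple h$ is closed in $G^n$.

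The main obstacle is the step (d)~$\Rightarrow$~(c) in part~(ii): one must translate an equality of orbit dimensions in $G^n$ into the group-theoretic assertion that $H$ lies in an R-Levi of $P_\lambda$, and the only available bridge is the combination of the generic-tuple machinery with Lemma~\ref{lem:Ruconj} and Theorem~\ref{thm:Ruconj}. Once part~(i) is secured via Theorem~\ref{thm:Ruconj}, the remaining work is the routine bookkeeping with centralizer dimensions and the verification that $P_\mu=P_\lambda$, so no genuinely new difficulty arises.
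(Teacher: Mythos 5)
Your proposal is correct and follows essentially the same route as the paper: part (i) via Theorem~\ref{thm:Ruconj} (over $\ovl k$) together with the fact that proper suborbits in an orbit closure have strictly smaller dimension, and parts (ii)--(iii) via generic tuples, Lemma~\ref{lem:generictuple} and Lemma~\ref{lem:Ruconj}, finishing with the Hilbert--Mumford Theorem. The only cosmetic difference is that in (ii)(d)$\Rightarrow$(c) you pass through $C_G(\tuple h)=C_G(H)$ and $L_\mu=C_G(\mu(\ovl k^*))$, whereas the paper routes through the subgroup generated by $\tuple h$ and Lemma~\ref{lem:generictuple}(ii); both are instances of the same argument.
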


\begin{proof}
(i). It is obvious that (b) implies (a) and (a) implies (c).  It follows immediately from
Theorem~\ref{thm:Ruconj} and \cite[Prop.~I.1.8]{Bo} that (c) implies (b).

(ii) and (iii). Let $\tuple{h}\in H^n$, let $H'$ be the algebraic subgroup of
$G$ generated by $\tuple{h}$ and let $\lambda\in Y(G)$.
Then $\lim_{a\ra 0}\lambda(a)\cdot \tuple{h}$ exists
if and only if $H'\subseteq P_\lambda$.
Now assume that $\tuple{m}=\lim_{a\ra 0}\lambda(a)\cdot \tuple{h}$ exists.
Let $u\in R_u(P_\lambda)$. Then $\tuple{h}=u\cdot\tuple{m}$
if and only if $u\cdot\lambda$ fixes $\tuple{h}$ (Lemma~\ref{lem:Ruconj})
if and only if $H'\subseteq L_{u\cdot\lambda}=uL_\lambda u^{-1}$.
Pick a generic tuple $\tuple{h}\in H^n$ of $H$ for some $n\in {\mathbb N}$.
Then $\tuple{m}=c_\lambda(\tuple{h})$ is a generic tuple of $M$,
by Lemma~\ref{lem:generictuple}(iii).
Now the first assertion of (ii) follows from the fact that
$\dim G\cdot\tuple{m}\le\dim G\cdot\tuple{h}$
(see \cite[Prop.~I.1.8]{Bo}),
since $\dim G\cdot\tuple{h}=\dim G-\dim C_G(\tuple{h})$, which equals $\dim G-\dim C_G(H)$ (Lemma \ref{lem:generictuple}(i)),
and likewise for $\tuple{m}$. Now we prove the equivalences.
Clearly, (b) implies (a) and (a) implies (d).
Furthermore, we have for $u\in R_u(P_\lambda)$
that $H\subseteq L_{u\cdot\lambda}$ if and only if
$H=c_{u\cdot\lambda}(H)=uMu^{-1}$.
So (b) is equivalent to (c). Now assume that (d) holds.
Then $\dim G\cdot\tuple{m}=\dim G\cdot\tuple{h}$.
So $\tuple{m}$ is $R_u(P_\lambda)$-conjugate to $\tuple{h}$, by (i), whence $H'$ is $R_u(P_\lambda)$-conjugate to $c_\lambda(H')$.
By the equivalence of (b) and (c) (applied to $H'$), $H'$ is contained
in an R-Levi subgroup of $P_\lambda$. Since $\tuple{h}$ is a generic tuple of $H$,
(c) holds by Lemma \ref{lem:generictuple}(ii).
Lemma \ref{lem:generictuple}(i) implies that (i)(c) and (ii)(d) are equivalent, so the first assertion of (iii) holds.
The final assertion of (iii) follows from the first, Lemma \ref{lem:generictuple}(ii) and the
Hilbert-Mumford Theorem.
\end{proof}

We now give a geometric characterization of $G$-complete reducibility over an arbitrary field $k$, using Theorem \ref{thm:cocharclosedcrit}.  Note that the subgroup $H$ in Theorem \ref{thm:cocharclosedcritforGcr} need not be $k$-defined.  In view of Remark \ref{rem:cocharclosure},
Theorem \ref{thm:cocharclosedcritforGcr} in the special case $k = \ovl{k}$
yields the final assertion of Theorem \ref{thm:orbclosconjcrit}(iii).

\begin{thm}
\label{thm:cocharclosedcritforGcr}
Suppose that $G$ is connected.
Let $H$ be a subgroup of $G$ and let $\tuple h \in H^n$
be a generic tuple of $H$.
Then
$H$ is $G$-completely reducible over $k$
if and only if
$G(k)\cdot \tuple h$ is cocharacter-closed over $k$.
\end{thm}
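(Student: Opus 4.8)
The statement to prove is: for $G$ connected, $H$ a subgroup of $G$ with generic tuple $\tuple h \in H^n$, one has $H$ is $G$-completely reducible over $k$ if and only if $G(k)\cdot\tuple h$ is cocharacter-closed over $k$. The plan is to run the $G$-action of $G$ on $V = G^n$ by simultaneous conjugation, exactly as in the proof of Theorem~\ref{thm:orbclosconjcrit}, but now keeping track of $k$-rationality throughout and using Theorem~\ref{thm:cocharclosedcrit} as the crucial bridge. Note first that $V = G^n$ is a $k$-defined affine $G$-variety, and by Lemma~\ref{lem:generictuple}(i) we have $C_{G(k_s)}(\tuple h) = C_{G(k_s)}(H)$, which is $\Gamma$-stable since $H$ need not be $k$-defined but its associative-algebra span behaves well; this is the rationality hypothesis one wants in order to apply the machinery of Section~\ref{sec:orbrat}.

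First I would unwind both sides into statements about cocharacters. For the left-hand side, Lemma~\ref{lem:Gcroverk} says $H$ is $G$-cr over $k$ if and only if for every $\lambda\in Y_k(G)$ with $H\subseteq P_\lambda$, there is $\mu\in Y_k(G)$ with $P_\lambda = P_\mu$ and $H\subseteq L_\mu$. For the right-hand side, Theorem~\ref{thm:cocharclosedcrit} (which applies precisely because $G$ is connected) says $G(k)\cdot\tuple h$ is cocharacter-closed over $k$ if and only if for every $\lambda\in Y_k(G)$ such that $\tuple m := \lim_{a\to 0}\lambda(a)\cdot\tuple h$ exists, $\tuple m$ is $R_u(P_\lambda)(k)$-conjugate to $\tuple h$. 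Now the key translation steps, all available from the preliminaries: $\lim_{a\to 0}\lambda(a)\cdot\tuple h$ exists if and only if $H\subseteq P_\lambda$ (since $H$ is generated by $\tuple h$ and limits in $G^n$ under conjugation exist exactly when the generated subgroup lies in $P_\lambda$); and for $u\in R_u(P_\lambda)$, one has $\tuple h = u\cdot\tuple m$ if and only if $u\cdot\lambda$ fixes $\tuple h$ (Lemma~\ref{lem:Ruconj}) if and only if $H\subseteq L_{u\cdot\lambda}$. Here one must be careful to track that $u$ lies in $R_u(P_\lambda)(k)$: this is what makes $u\cdot\lambda$ a $k$-defined cocharacter, and $L_{u\cdot\lambda}$ a $k$-defined R-Levi subgroup by Lemma~\ref{lem:Rparrat}(i).

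Putting these together: assuming $G(k)\cdot\tuple h$ is cocharacter-closed over $k$, take any $\lambda\in Y_k(G)$ with $H\subseteq P_\lambda$; then the limit $\tuple m$ exists, so there is $u\in R_u(P_\lambda)(k)$ with $\tuple h = u\cdot\tuple m$, hence $H\subseteq L_{u\cdot\lambda}$ with $\mu := u\cdot\lambda\in Y_k(G)$ and $P_\mu = P_\lambda$ (since $u\in P_\lambda$), which is exactly the condition in Lemma~\ref{lem:Gcroverk}, so $H$ is $G$-cr over $k$. Conversely, assuming $H$ is $G$-cr over $k$, take $\lambda\in Y_k(G)$ with the limit $\tuple m$ existing, so $H\subseteq P_\lambda$; Lemma~\ref{lem:Gcroverk} gives $\mu\in Y_k(G)$ with $P_\mu = P_\lambda$ and $H\subseteq L_\mu$. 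By Corollary~\ref{cor:kLevi} (or directly Lemma~\ref{lem:Rparrat}(iii)) one can replace $\mu$ by $u\cdot\lambda$ for some $u\in R_u(P_\lambda)(k)$ with the same R-Levi subgroup; then $u\cdot\lambda$ fixes $\tuple h$, so by Lemma~\ref{lem:Ruconj}, $\tuple h = u\cdot\tuple m$, i.e.\ $\tuple m$ is $R_u(P_\lambda)(k)$-conjugate to $\tuple h$. By Theorem~\ref{thm:cocharclosedcrit} again, $G(k)\cdot\tuple h$ is cocharacter-closed over $k$.

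The main obstacle I anticipate is not any single deep step but rather the bookkeeping around which R-Levi subgroup and which cocharacter is $k$-defined: one must be sure that when Lemma~\ref{lem:Gcroverk} hands back an arbitrary $\mu\in Y_k(G)$ with $P_\mu = P_\lambda$, it can be adjusted within $R_u(P_\lambda)(k)$ to a cocharacter fixing $\tuple h$ — this is where Lemma~\ref{lem:Rparrat}(iii) on simple transitivity of $R_u(P)(k)$ on $k$-defined R-Levi subgroups, together with Lemma~\ref{lem:Ruconj}, does the essential work. A secondary point to handle carefully is verifying that the generic tuple hypothesis is used in exactly the right places: it enters via Lemma~\ref{lem:generictuple}(ii) to guarantee that $H'$ (the subgroup generated by $\tuple h$) lies in the same R-parabolic and R-Levi subgroups as $H$, so that statements about $\tuple h$ and statements about $H$ coincide, and via part~(i) of that lemma to secure $\Gamma$-stability of the centralizer; with those in hand the argument is a clean two-way translation.
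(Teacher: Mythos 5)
Your proposal is correct and follows essentially the same route as the paper's proof: Lemma~\ref{lem:Gcroverk} to reduce to R-parabolic subgroups of the form $P_\lambda$ with $\lambda\in Y_k(G)$, Theorem~\ref{thm:cocharclosedcrit} for the direction from cocharacter-closedness to $G$-complete reducibility over $k$, and Lemma~\ref{lem:Ruconj}, Lemma~\ref{lem:Rparrat}(iii) together with Lemma~\ref{lem:generictuple}(ii) to pass between statements about $\tuple h$ and statements about $H$. The only superfluous point is your concern about $\Gamma$-stability of $C_{G(k_s)}(\tuple h)$: Theorem~\ref{thm:cocharclosedcrit} imposes no rationality hypothesis on the point $v$, so nothing of the sort is needed here (and indeed $H$ need not be $k$-defined).
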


\begin{proof}
Suppose that $G(k)\cdot \tuple h$ is cocharacter-closed over $k$.
In order to show that $H$ is $G$-cr over $k$,
we just need to consider R-parabolic subgroups of $G$ containing $H$ of the
form $P_\lambda$ with $\lambda \in Y_k(G)$, by Lemma \ref{lem:Gcroverk}.
Let $\lambda \in Y_k(G)$ be such that $P_\lambda$
contains $H$.
Then $\tuple h': = c_\lambda(\tuple h)$ exists.
Since $G(k)\cdot \tuple h$ is cocharacter-closed over $k$,
there exists $u \in R_u(P_\lambda)(k)$ such that
$\tuple h' = u \cdot \tuple h$, by Theorem \ref{thm:cocharclosedcrit}.
By Lemma \ref{lem:Ruconj},
$u^{-1}\cdot\lambda$ centralizes $\tuple h$.
Hence $H \subseteq L_{u^{-1}\cdot\lambda}$.
Since $L_{u^{-1}\cdot\lambda}$ is $k$-defined,
$H$ is $G$-completely reducible over $k$.

Now assume that $H$ is $G$-completely reducible over $k$.
Let $\lambda \in Y_k(G)$
such that $\tuple h': = c_\lambda(\tuple h)$ exists.
Then $H \subseteq P_\lambda$.
So, by hypothesis, there exists a $k$-defined R-Levi subgroup $L$ of
$P_\lambda$ with $H \subseteq L$.
By Lemma \ref{lem:Rparrat}(iii),
there exists $u \in R_u(P_\lambda)(k)$ such that
$L = u^\inverse L_\lambda u = L_{u^\inverse \cdot \lambda}$.
Hence $u^\inverse \cdot \lambda$ centralizes $H$ and so
$u^\inverse \cdot \lambda$ centralizes $\tuple h$.
Thus, by Lemma \ref{lem:Ruconj}, we have $\tuple h' = u \cdot \tuple h$.
Consequently,
$G(k)\cdot \tuple h$ is cocharacter-closed over $k$.
\end{proof}

\begin{rem}
\label{rem:obstacle}
We now provide examples for the failure of Question \ref{qn:cocharclsdext}
in general.
In \cite[Ex.~7.22]{BMRT},
we give an example of a reductive group $G$ and a subgroup $H$,
both $k$-defined, such that $H$ is $G$-completely
reducible but not $G$-completely reducible over $k$.
Let ${\mathbf h}\in H^n$ be a generic tuple of $H$.
Then, by Theorem \ref{thm:cocharclosedcritforGcr},
$G\cdot {\mathbf h}$ is closed in $G^n$ but $G(k)\cdot {\mathbf h}$ is not
cocharacter-closed over $k$.
Conversely, an example due to McNinch, \cite[Ex.~5.11]{BMR}, gives
a reductive group $G$ and a subgroup $H$, both $k$-defined,
such that $H$ is $G$-completely reducible over $k$ but not $G$-completely
reducible, and this implies that there exists a generic tuple
${\mathbf h}\in H^n$ for some
$n\in {\mathbb N}$ such that $G(k)\cdot {\mathbf h}$ is cocharacter-closed
over $k$ but $G\cdot {\mathbf h}$ is not closed.  Hence $\tuple{h}$ is uniformly $S$-unstable over $\ovl{k}$ but not uniformly $S$-unstable over $k$, where $S$ is the unique closed $G$-orbit contained in $\ovl{G\cdot v}$.  In fact, we have $S= \{(1,\ldots, 1)\}$ in this example, so $S$ has a $k$-point.
Note that $C_{G(k_s)}(\tuple{h})$ is $\Gamma$-stable in both cases (this follows from Lemma \ref{lem:generictuple}(i)), so we have counterexamples to Question \ref{qn:cocharclsdext}; moreover, in both cases the extension $\ovl{k}/k$ is not separable.

We even have an example where $v\in V(k)$, $k$ is infinite, $G\cdot v$ is not closed and $G(k)\cdot v$ is a Zariski-closed subset of $V(k)$.  Let $k$ be a separably closed non-perfect field of characteristic 2 and let $G= {\rm GL}_2$ acting on $V= {\rm GL}_2$ by conjugation.  Choose $a\in k^{1/2}\setminus k$.  Let $v= \twobytwo{0}{1}{a^2}{0}$ and let $v'= \twobytwo{a}{0}{0}{a}$ (cf. \cite[2.4.11]{spr2}).  It is easily checked that the closure of $G\cdot v$ is $G\cdot v\cup \{v'\}$.  Moreover, the orbit map $G\ra G\cdot v$, $g\mapsto g\cdot v$ is separable (cf.\ \cite[Ex.~3.28 and Rem.~3.31]{BMR}) and hence is surjective on $k$-points \cite[AG.13.2~Thm.]{Bo}.  This implies that $G(k)\cdot v$ is closed in $V(k)$ (and hence is cocharacter-closed over $k$, by Remark \ref{rem:cocharclosure}(ii)).  The unique closed $G$-orbit $S$ contained in $\ovl{G\cdot v}$ has no $k$-points --- in contrast to the previous example --- and it follows that $v$ is uniformly $S$-unstable over $\ovl{k}$ but not uniformly $S$-unstable over $k$.
\end{rem}

The interpretation of $G$-complete reducibility in terms of
orbits allows us to provide a partial answer to a question of Serre;
for a more general result, see \cite[Thm.~4.13]{BMRT:relative}.
Let $k_1/k$ be a separable algebraic extension of fields.
Serre has asked whether it is the case that a
$k$-defined subgroup $H$ of $G$ is $G$-completely reducible over $k$
if and only if it is $G$-completely reducible over $k_1$.  This was proved in \cite[Thm.~5.8]{BMR} for $k$ perfect by passing back and forth between $k$ and $\ovl{k}$ and between $k_1$ and $\ovl{k}$.  In general this approach fails because the extension $\ovl{k}/k$ need not be separable; we discuss this further in Example \ref{exmp:obstacle} below.  This shows that even if one is interested only in separable field extensions $k_1/k$, problems with inseparability can arise.

We can now answer one direction of Serre's question.  Theorem \ref{thm:serresquestion}
gives a group-theoretic analogue of
Theorem \ref{thm:GeneralSeparableDown}.

\begin{thm}
\label{thm:serresquestion}
Suppose $k_1/k$ is a separable extension of fields.
Let $H$ be a $k$-defined subgroup of $G$.
If $H$ is $G$-completely reducible over $k_1$, then
$H$ is $G$-completely reducible over $k$.
\end{thm}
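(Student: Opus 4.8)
The plan is to verify the criterion of Lemma \ref{lem:Gcroverk}: for every $\lambda\in Y_k(G)$ with $H\subseteq P_\lambda$ one must produce $\mu\in Y_k(G)$ with $P_\mu=P_\lambda$ and $H\subseteq L_\mu$. The strategy is to transport the $G$-complete reducibility of $H$ over $k_1$ into a statement about the limit of a \emph{generic} tuple of $H$ under $\lambda$, and then descend that statement from $k_1$ (or $k_s$) to $k$ using Theorem \ref{thm:sepRuconj}. Since Lemma \ref{lem:Gcroverk}, Theorem \ref{thm:sepRuconj} and the generic-tuple machinery of Lemmas \ref{lem:generictuple} and \ref{lem:Ruconj} all hold for possibly disconnected $G$, this handles both the connected and non-connected cases at once, and in particular avoids routing through Theorem \ref{thm:GeneralSeparableDown}, which requires $G$ connected or $k$ perfect.

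First I would fix $\lambda\in Y_k(G)\subseteq Y_{k_1}(G)$ with $H\subseteq P_\lambda$. Since $P_\lambda$ is a $k_1$-defined R-parabolic subgroup containing $H$ and $H$ is $G$-completely reducible over $k_1$, Definition \ref{def:gcr} provides a $k_1$-defined R-Levi subgroup $L$ of $P_\lambda$ with $H\subseteq L$. Applying Lemma \ref{lem:Rparrat}(iii) over $k_1$ to the two $k_1$-defined R-Levi subgroups $L_\lambda$ and $L$ of $P_\lambda$, there is $u\in R_u(P_\lambda)(k_1)$ with $L=uL_\lambda u\inverse=L_{u\cdot\lambda}$; in particular $u\cdot\lambda$ centralises $L$, hence centralises $H$.

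Next I would choose a generic tuple $\tuple h\in H^n$ of $H$ (for some embedding $G\hookrightarrow\GL_m$, which exists for $n$ large) and view it as a point of the $k$-defined affine $G$-variety $V=G^n$ with the conjugation action. Because $H\subseteq P_\lambda$, the limit $\tuple m:=c_\lambda(\tuple h)=\lim_{a\to 0}\lambda(a)\cdot\tuple h$ exists, and since $u\cdot\lambda$ centralises $\tuple h$, Lemma \ref{lem:Ruconj} gives $\tuple m=u\inverse\cdot\tuple h$. Thus $\tuple m$ is $R_u(P_\lambda)(k_1)$-conjugate to $\tuple h$, hence, after reducing to the case $k_1\subseteq k_s$, also $R_u(P_\lambda)(k_s)$-conjugate to $\tuple h$. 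Moreover $C_{G(k_s)}(\tuple h)=C_{G(k_s)}(H)$ by Lemma \ref{lem:generictuple}(i), and this set is $\Gamma$-stable because $H$ is $k$-defined. Theorem \ref{thm:sepRuconj} then yields $u_0\in R_u(P_\lambda)(k)$ with $\tuple m=u_0\cdot\tuple h$.

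To finish, I would apply Lemma \ref{lem:Ruconj} once more: from $\lim_{a\to 0}\lambda(a)\cdot\tuple h=\tuple m=u_0\cdot\tuple h$ it follows that $u_0\inverse\cdot\lambda$ centralises $\tuple h$, hence centralises $H$ by Lemma \ref{lem:generictuple}(i). Therefore $\mu:=u_0\inverse\cdot\lambda$ lies in $Y_k(G)$ (as $u_0\in G(k)$ and $Y_k(G)$ is $G(k)$-stable), satisfies $P_\mu=u_0\inverse P_\lambda u_0=P_\lambda$ (as $u_0\in R_u(P_\lambda)\subseteq P_\lambda$), and has $H\subseteq L_\mu$. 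By Lemma \ref{lem:Gcroverk}, $H$ is $G$-completely reducible over $k$. The one genuinely delicate point is the reduction to $k_1\subseteq k_s$: for a separable \emph{algebraic} extension it is automatic, while for a general separable extension one should replace $L$ by a conjugate defined over a finitely generated (hence separably generated) subextension of $k_1/k$ and specialise the transcendence parameters into $k_s$. I expect this spreading-out step to be the main technical obstacle; everything else is a direct assembly of the rationality results of Sections \ref{sec:prelims} and \ref{sec:orbrat}.
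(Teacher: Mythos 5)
Your proposal is correct and follows essentially the same route as the paper's own proof: take a generic tuple $\tuple h$ of $H$, convert $G$-complete reducibility over $k_1$ into $R_u(P_\lambda)(k_1)$-conjugacy of $\tuple h$ with its limit via Lemma \ref{lem:Rparrat}(iii) and Lemma \ref{lem:Ruconj}, descend to $R_u(P_\lambda)(k)$-conjugacy by Theorem \ref{thm:sepRuconj} using the $\Gamma$-stability of $C_{G(k_s)}(\tuple h)$ from Lemma \ref{lem:generictuple}(i), and conclude with Lemma \ref{lem:Ruconj} and Lemma \ref{lem:Gcroverk}. The only divergence is your closing worry about non-algebraic separable extensions: in the paper's setting $k_1/k$ is a separable \emph{algebraic} extension with $k_1\subseteq k_s$, so no spreading-out step is needed.
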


\begin{proof}
Let $\mathbf{h} \in H^n$  be a generic tuple of $H$ for some $n$.
Suppose $\lambda \in Y_k(G)$ is such that $H \subseteq P_\lambda$.
Then since $H$ is $G$-cr over $k_1$,
there exists
$u_1 \in R_u(P_\lambda)(k_1) \subseteq R_u(P_\lambda)(k_s)$ such that
$H \subseteq L_{u_1\inverse\cdot\lambda}$.
Thus, $u_1\inverse\cdot\lambda$ centralizes $H$ and so
$u_1\inverse\cdot\lambda$ centralizes $\tuple h$. It thus follows from
Lemma \ref{lem:Ruconj} that
$\lim_{a\to 0} \lambda(a)\cdot \mathbf{h} = u_1\cdot \mathbf{h}$.
Now $C_{G(k_s)}(\mathbf{h})$ is $\Gamma$-stable by Remark \ref{rem:generictuple}, so we can apply Theorem \ref{thm:sepRuconj} to conclude that there exists
$u \in R_u(P_\lambda)(k)$ such that
$\lim_{a\to 0} \lambda(a)\cdot \mathbf{h} = u\cdot \mathbf{h}$.
Thus $u\inverse\cdot\lambda \in Y_k(G)$ centralizes $\mathbf{h}$ (Lemma \ref{lem:Ruconj}), whence $u\inverse\cdot\lambda$ centralizes $H$ (Lemma \ref{lem:generictuple}(ii)).
We therefore have $H \subseteq L_{u\inverse\cdot \lambda}$, a $k$-defined
R-Levi subgroup of $P_\lambda$, as required.
\end{proof}

\begin{exmp}
\label{ex:serreGL}
We show that the answer to Serre's question is yes when $G = \GL(V)$,
where $V = \ovl{k}^n$ with the standard $k$-structure $k^n$ on $V$.
This of course determines the usual $k$-structure on $\GL(V)$.
Let $H$ be a subgroup of $G$ and let $A$ be its enveloping algebra: that is,
the $\ovl{k}$-span of $H$ in $\End_{\ovl{k}}(V)$.
Then $A$ is $k$-defined provided $H$ is. To see this, we exhibit
a $\Gamma$-stable, dense subset of separable points in $A$
and for this set we simply take the $k_s$-span of $H(k_s)$ in
$\End_{\ovl{k}}(V)$.
As a consequence, we obtain the following characterization of
$\GL(V)$-complete reducibility over $k$ under the assumption that $H$ is $k$-defined:
$H$ is $\GL(V)$-cr over $k$ if and only if $V(k)=k^n$
is a semisimple $A(k)$-module (if and only if $A(k)$ is a
semisimple algebra). Here $A(k)$ denotes the algebra of $k$-points of $A$
(this is a $k$-structure on $A$: $A = \ovl{k}\otimes_k A(k)$).

Finally, if $H$ and $A$ are as above and $k_1\subseteq \ovl{k}$ is an algebraic
extension of $k$, then $A(k_1) = k_1\otimes_k A(k)$.
It follows from \cite[Cor.\ 69.8 and Cor.\ 69.10]{curtisreiner}
that $A(k)$ is semisimple if and only if $A(k_1)$
is semisimple,  provided $k_1$ is a separable
extension of $k$. By the above this means that $H$ is $\GL(V)$-cr over $k$
if and only if $H$ is $\GL(V)$-cr over $k_1$.
\end{exmp}

\subsection{Optimal destabilizing parabolic subgroups for subgroups of $G$}
\label{subsec:optnongcr}

In this section we assume $G$ is a normal $k$-subgroup
of a %reductive
$k$-defined linear algebraic group $G'$.  If $G'$ is not explicitly given, we just take $G'$ to be $G$.
We fix a $G'$-invariant norm $\left\|\,\right\|$ on $Y(G)$,
see Definition \ref{def:norm}.  Recall our convention that $P_\lambda$ is a subgroup of $G$: so the optimal destabilizing subgroups defined below are parabolic subgroups of $G$, not of $G'$.

Let $H$ be a subgroup of $G$ such that $H$ is not $G$-completely reducible.
Suppose there exists $\tuple{h}\in H^n$ such
that $H$ is generated by $\tuple{h}$.  Then $G\cdot \tuple{h}$ is not closed in $G^n$, and we can construct the optimal destabilizing
parabolic subgroup $P_\tuple{h}= P(\tuple{h},S)$ of $G$ for $\tuple{h}$, where $S$ is the unique closed $G$-orbit contained in $\ovl{G\cdot \tuple{h}}$.
Several recent results involving $G$-complete reducibility
have rested on this construction \cite{martin2},
\cite[Sec.~3, Thm.~5.8]{BMR}, \cite[Thm.~5.4(a)]{BMRT}.
There are some technical problems in applying it.
For instance,
if $g\in G$ normalizes $H$, then $g$ need not centralize $\tuple{h}$ (cf.\ the proof of \cite[Prop.~5.7]{BMRT}).

We now show how to associate an optimal destabilizing
R-parabolic subgroup $P(H)$ to $H$ using uniform $S$-instability.
This avoids the above problems and yields shorter,
cleaner proofs, because we need not deal explicitly
with a generating tuple for $H$.

\begin{rem}
\label{rem:BorelTits}
We can regard the following construction as a generalization of
the Borel-Tits construction \cite{boreltits}, which associates to a
non-trivial unipotent element $u\in G$ a parabolic subgroup
$P_{\rm BT}$ of $G$ such that $u\in R_u(P_{\rm BT})$.
More generally, the latter construction associates to a
non-reductive subgroup $H$ of $G$ a parabolic
subgroup $P_{\rm BT}$ of $G$ such that $R_u(H)\subseteq R_u(P_{\rm BT})$.
Our construction works for any non-G-completely reducible $H$,
including the case when $H$ is reductive. Note, however, that
if $H$ is non-reductive, then $P_{\rm BT}$ does not necessarily
coincide with $P(H)$ from Definition \ref{defn:optpar}, \cite[Rem.\ 8.4]{He}.
\end{rem}

First we need a prelimimary result which gives us a closed $G$-stable subvariety $S_n(M)$ of $G^n$ to work with.  The idea is that the $G$-conjugacy class of a generic tuple of the group $M$ from Proposition \ref{prop:uniquegcr}
corresponds to the unique closed $G$-orbit in the $G$-orbit closure of a generic tuple of $H$.

\begin{prop}
\label{prop:uniquegcr}
Let $H$ be a subgroup of $G$.
\begin{enumerate}[{\rm(i)}]
\item There exists $\lambda\in Y(G)$ and a
$G$-completely reducible subgroup $M$ of $G$
such that $H\subseteq P_\lambda$ and $c_\lambda(H)=M$.
Moreover, $M$ is unique up to $G$-conjugacy and its
$G$-conjugacy class depends only on the $G$-conjugacy class of $H$.
\item Any automorphism of the algebraic group $G$
that stabilizes the $G$-conjugacy class of $H$,
stabilizes the $G$-conjugacy class of $M$.
\item Any $\gamma\in \Gamma$ that stabilizes the $G$-conjugacy class of $H$,
stabilizes the $G$-conjugacy class of $M$.
\item If $\mu\in Y(G)$ and $H\subseteq P_\mu$, then the
procedure described in (i) associates the same
$G$-conjugacy class of subgroups to $H$ and $c_\mu(H)$.
\end{enumerate}
\end{prop}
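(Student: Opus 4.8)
The plan is to transport everything to generic tuples, where Theorem~\ref{thm:orbclosconjcrit} and the standard closed-orbit theory are available. Fix an embedding $G\hookrightarrow\GL_m$ and a generic tuple $\tuple h\in H^n$ of $H$ for it. By Lemma~\ref{lem:generictuple}, for $\lambda\in Y(G)$ one has $H\subseteq P_\lambda$ if and only if $\lim_{a\to0}\lambda(a)\cdot\tuple h$ exists, in which case the limit is $c_\lambda(\tuple h)$ and is a generic tuple of $c_\lambda(H)$; moreover $c_\lambda(H)$ is $G$-completely reducible if and only if the $G$-orbit of $c_\lambda(\tuple h)$ in $G^n$ is closed, by Lemma~\ref{lem:generictuple}(ii) and Theorem~\ref{thm:orbclosconjcrit}(iii). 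So a pair $(\lambda,M)$ as in (i) is precisely a destabilizing cocharacter of $\tuple h$ into the unique closed $G$-orbit $S$ of $\ovl{G\cdot\tuple h}$; the Hilbert--Mumford Theorem furnishes one (take $\lambda=0$ when $G\cdot\tuple h$ is already closed), which gives existence. The final clause of (i) is then immediate: conjugating $H$ carries a valid pair to a valid pair.

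For uniqueness, suppose $(\lambda_1,M_1)$ and $(\lambda_2,M_2)$ both work. The generic tuples $c_{\lambda_i}(\tuple h)$ of $M_i$ lie in $\ovl{G\cdot\tuple h}$ with closed $G$-orbit, hence lie in $S$ and are $G$-conjugate; I want to emphasise that this is \emph{not} yet enough, since two $G$-completely reducible subgroups can share a generic tuple without being conjugate, and bridging this gap is the heart of the matter. The remedy is to combine $\lambda_1$ and $\lambda_2$. First note that replacing $\lambda_i$ by $p\cdot\lambda_i$ with $p\in P_{\lambda_i}$ only replaces $M_i$ by a $G$-conjugate: writing $p=lu$ with $l\in L_{\lambda_i}$, $u\in R_u(P_{\lambda_i})$ and using that $c_{\lambda_i}$ restricts to the identity on $L_{\lambda_i}$ and kills $R_u(P_{\lambda_i})$, one computes $c_{p\cdot\lambda_i}(H)=w\,c_{\lambda_i}(H)\,w^{-1}$ for a suitable $w\in G$. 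Since any two R-parabolic subgroups of $G$ share a maximal torus, after such harmless replacements I may assume $\lambda_1,\lambda_2\in Y(T)$ for a common maximal torus $T$, so that $\lambda_1$ and $\lambda_2$ commute. Embedding $G$ in a rational $G$-module and comparing weight supports with respect to $T$: every weight in $\operatorname{supp}_T(\tuple h)$ pairs non-negatively with both $\lambda_1$ and $\lambda_2$, and $\operatorname{supp}_T(c_{\lambda_1}(\tuple h))$ is the part orthogonal to $\lambda_1$, which therefore still pairs non-negatively with $\lambda_2$; hence $M_1\subseteq P_{\lambda_2}$, and symmetrically $M_2\subseteq P_{\lambda_1}$. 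As $M_1$ is $G$-completely reducible and contained in $P_{\lambda_2}$, Theorem~\ref{thm:orbclosconjcrit}(ii) gives that $c_{\lambda_2}(M_1)$ is $G$-conjugate to $M_1$, and likewise $c_{\lambda_1}(M_2)$ is $G$-conjugate to $M_2$. Finally $c_{\lambda_2}(c_{\lambda_1}(H))=c_{\lambda_1}(c_{\lambda_2}(H))$, because elementwise each side is the projection of the given element of $H$ onto the subspace fixed by both $\lambda_1(\ovl k^*)$ and $\lambda_2(\ovl k^*)$ (all the relevant limits exist by the containments just obtained). Chaining these identifications shows $M_1$ is $G$-conjugate to $c_{\lambda_2}(M_1)=c_{\lambda_2}(c_{\lambda_1}(H))=c_{\lambda_1}(c_{\lambda_2}(H))=c_{\lambda_1}(M_2)$, hence to $M_2$.

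Parts (ii) and (iii) follow a common template. If $\sigma$ is an algebraic automorphism of $G$ (resp.\ $\gamma\in\Gamma$) with $\sigma(H)$ $G$-conjugate to $H$, then $\sigma$ carries R-parabolic subgroups to R-parabolic subgroups and R-Levi subgroups to R-Levi subgroups, commutes with taking limits (Lemma~\ref{lem:Galoislim} in the Galois case), and preserves $G$-complete reducibility; so if $(\lambda,M)$ is a pair for $H$ as in (i), then $(\sigma\circ\lambda,\sigma(M))$ is such a pair for $\sigma(H)$. By (i) the $G$-conjugacy class attached to $\sigma(H)$ depends only on the $G$-conjugacy class of $\sigma(H)$, which is that of $H$; so $\sigma(M)$ is $G$-conjugate to $M$.

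For (iv), set $N:=c_\mu(H)\subseteq L_\mu$. Applying part (i) to $N$ regarded as a subgroup of the reductive group $L_\mu$ (an induction on $\dim G$, trivial if $P_\mu=G$) produces $\rho\in Y(L_\mu)$ with $N\subseteq P_\rho(L_\mu)$ and $c_\rho(N)$ completely reducible in $L_\mu$, hence $G$-completely reducible since a subgroup of a Levi subgroup is $G$-cr exactly when it is cr in that Levi \cite{BMR}. As $\rho(\ovl k^*)\subseteq L_\mu$ centralises $\mu(\ovl k^*)$, the cocharacters $\mu$ and $\rho$ commute, and after a harmless conjugation of $\rho$ inside $P_\rho(L_\mu)$ they lie in a common maximal torus. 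Then, by Lemma~\ref{lem:dblecochar} and a weight-support computation as above, $\gamma:=t\mu+\rho$ for $t$ sufficiently large satisfies $H\subseteq P_\gamma$ and $c_\gamma(H)=c_\rho\!\left(c_\mu(H)\right)=c_\rho(N)$; thus $(\gamma,c_\rho(N))$ is a pair for $H$ as in (i), so $c_\rho(N)$ is $G$-conjugate to the $M$ attached to $H$, while by construction $c_\rho(N)$ is also the subgroup attached to $N$. Hence the $G$-conjugacy classes attached to $H$ and to $c_\mu(H)$ coincide. The hard part throughout is the uniqueness in (i): the closed-orbit argument only produces a shared generic tuple, and the real content is the common-maximal-torus manoeuvre and the iterated-limit identity that let one feed $\lambda_1$ and $\lambda_2$ into one another.
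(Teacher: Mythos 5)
Your proof is correct, and the heart of it --- the uniqueness argument in (i) via a common maximal torus, the inclusion $M_1\subseteq P_{\lambda_2}$ (and vice versa), the identity $c_{\lambda_2}\circ c_{\lambda_1}=c_{\lambda_1}\circ c_{\lambda_2}$ on $P_{\lambda_1}\cap P_{\lambda_2}$, and then Theorem~\ref{thm:orbclosconjcrit}(ii) --- is essentially the paper's argument, except that you reprove the commutation of the two limit maps by a weight-support computation where the paper simply cites \cite[Lem.~6.2(iii)]{BMR}; your preliminary check that replacing $\lambda_i$ by a $P_{\lambda_i}$-conjugate only changes $M_i$ by an $R_u(P_{\lambda_i})$-conjugate is also exactly what the paper's ``after possibly replacing $M_1$\dots'' step requires, and you are right that $G$-conjugacy of the generic tuples alone does not give conjugacy of $M_1$ and $M_2$ (the tuple only determines the enveloping algebra, not the group). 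Where you genuinely diverge is in existence and in (iv). For existence the paper takes an R-parabolic subgroup $P_\lambda$ minimal over $H$ and quotes that $c_\lambda(H)$ is then $L_\lambda$-irreducible, hence $G$-cr (\cite[Cor.~6.4, Cor.~3.22]{BMR}); you instead pass to a generic tuple and invoke the Hilbert--Mumford theorem plus Theorem~\ref{thm:orbclosconjcrit}(iii), which is equally valid (no circularity, since that theorem precedes the proposition) and has the virtue of identifying the attached class concretely as the unique closed orbit in $\ovl{G\cdot\tuple h}$, at the cost of fixing an embedding and a tuple. For (iv) the paper feeds $\mu$ into the original $\lambda$ (again via a common maximal torus and Theorem~\ref{thm:orbclosconjcrit}(ii)), whereas you run the procedure for $N=c_\mu(H)$ inside the Levi $L_\mu$, so that the resulting $\rho$ automatically commutes with $\mu$, and then build $\gamma=t\mu+\rho$ with Lemma~\ref{lem:dblecochar}; this works, using the fact (also used by the paper in (i)) that $L_\mu$-complete reducibility is equivalent to $G$-complete reducibility for subgroups of $L_\mu$, and it trades the second common-torus manoeuvre for the iterated-limit Lemma~\ref{lem:dblecochar}. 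Parts (ii) and (iii) coincide with the paper's treatment (Lemma~\ref{lem:Galoislim} in the Galois case).
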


\begin{proof}
(i).\ Let $P_\lambda$ be an R-parabolic subgroup
of $G$ which is minimal with respect to containing $H$.
Since $H\subseteq c_\lambda(H)R_u(P_\lambda)$ and
$R_u(P_\lambda)\subseteq  R_u(Q)$
for every R-parabolic subgroup $Q$ of $G$ with $Q\subseteq P_\lambda$,
we have that $P_\lambda$ is also minimal with respect to containing
$M=c_\lambda(H)$. So $M$ is $L_\lambda$-irreducible and
therefore $G$-cr, see \cite[Cor.~6.4, Cor.~3.22]{BMR}.

Now suppose $\lambda,\mu\in Y(G)$ such that
$H\subseteq P_\lambda$ and $H\subseteq P_\mu$, and such that
$M_1=c_\lambda(H)$ and $M_2=c_\mu(H)$ are $G$-cr.
Since $P_\lambda$ and $P_\mu$ have a maximal torus in common
(see, e.g., \cite[Cor.~IV.14.13]{Bo}),
after possibly replacing $M_1$ by an $R_u(P_\lambda)$-conjugate and
$M_2$ by an $R_u(P_\mu)$-conjugate, we may assume that $\lambda(k^*)$
and $\mu(k^*)$ commute. Clearly, $P_\lambda\cap P_\mu$ is stable under
$c_\lambda$ and $c_\mu$. It follows from \cite[Lem.~6.2(iii)]{BMR} that,
on $P_\lambda\cap P_\mu$, the composition
$c_\lambda\circ c_\mu=c_\mu\circ c_\lambda$ is
the projection $P_\lambda\cap P_\mu \to L_\lambda\cap L_\mu$ with kernel
$R_u(P_\lambda\cap P_\mu)$. So $c_\lambda(M_2)=c_\mu(M_1)$.
Now $M_1$ is $G$-cr, so, by
Theorem~\ref{thm:orbclosconjcrit}(ii),
$M_1$ is $R_u(P_\mu)$-conjugate to $c_\mu(M_1)$.
Similarly, $M_2$ is $R_u(P_\lambda)$-conjugate to $c_\lambda(M_2)$.
So $M_1$ and $M_2$ are $G$-conjugate. Finally, we observe that if
$H\subseteq P_\lambda$ and $g\in G$, then $gHg^{-1}\subseteq P_{g\cdot\lambda}$
and $c_{g\cdot\lambda}(gHg^{-1})=gc_\lambda(H)g^{-1}$,
so the $G$-conjugacy class of $M$ only depends on that of $H$.

(ii).\ Let $\varphi$ be an automorphism of the algebraic
group $G$ that stabilizes the $G$-conjugacy class of $H$
and let $\lambda\in Y(G)$ such that $H\subseteq P_\lambda$
and $c_\lambda(H)$ is $G$-cr. Then $\varphi(H)$ is $G$-conjugate
to $H$ and $\varphi(H)\subseteq P_{\varphi\circ\lambda}$.
Now $\varphi(c_\lambda(H))=c_{\varphi\circ\lambda}(\varphi(H))$
is $G$-conjugate to $c_\lambda(H)$ by (i), since $c_\lambda(H)$ is $G$-cr.

(iii).\ Let $\gamma\in \Gamma$ such that $\gamma$ stabilizes the $G$-conjugacy class of $H$ and let $\lambda\in Y(G)$ such that $H\subseteq P_\lambda$
and $c_\lambda(H)$ is $G$-cr. Then $\gamma\cdot H$ is $G$-conjugate
to $H$ and $\gamma\cdot H\subseteq P_{\gamma\cdot \lambda}$.
Now $\gamma\cdot M$ is $G$-cr by Lemma \ref{lem:Galoislim}, so $\gamma\cdot M= \gamma\cdot(c_\lambda(H))=c_{\gamma\cdot \lambda}(\gamma\cdot H)$
is $G$-conjugate to $c_\lambda(H)$ by (i).

(iv).\ Assume that $H\subseteq P_\mu$ and let $\lambda\in Y(G)$
such that $H\subseteq P_\lambda$ and $c_\lambda(H)$ is $G$-cr.
After replacing $\lambda$ by a $P_\lambda$-conjugate and $\mu$
by a $P_\mu$-conjugate, we may assume that $\lambda$ and $\mu$
commute. As in (i), $P_\lambda\cap P_\mu$ is stable under $c_\lambda$
and $c_\mu$ and $c_\lambda(c_\mu(H))=c_\mu(c_\lambda(H))$ is
$R_u(P_\mu)$-conjugate to $c_\lambda(H)$ and is $G$-cr,
since $c_\lambda(H)$ is $G$-cr.
\end{proof}

\begin{defn}
\label{def:SnM}
Let $M$ be a subgroup of $G$. Given $n\in {\mathbb N}$,
set $S_n(M) := \ovl{G\cdot M^n}$, a closed $G$-stable subset
of $G^n$. Note that $S_n(M)$ only depends on the $G$-conjugacy class of $M$.
Now suppose there exists $\lambda\in Y_k(G)$ such that
$H\subseteq P_\lambda$ and $M=c_\lambda(H)$. Note that
if $k$ is algebraically closed, then some subgroup of $G$ in the
$G$-conjugacy class attached to $H$ of $G$-cr subgroups of $G$,
provided by Proposition~\ref{prop:uniquegcr}, satisfies
this hypothesis. Then we have $c_\lambda(H^n)\subseteq M^n\subseteq S_n(M)$,
so $H^n$ is uniformly $S_n(M)$-unstable over $k$
(in the sense of Definition \ref{def:destabilizing}).
\end{defn}

\begin{thm}
\label{thm:optpar}
Let $G$, $G'$ and $\left\|\,\right\|$ be as above.
Let $H$ be any subgroup of $G$ and let $n\in {\mathbb N}$
such that $H^n$ contains a generic tuple of $H$.
Let $M$ be a subgroup of $G$ and suppose that
$M=c_\lambda(H)$ for some $\lambda\in Y_k(G)$
with $H\subseteq P_\lambda$. Put $\Omega(H,M,k) := \Omega(H^n,S_n(M),k)$.
Then the following hold:
\begin{enumerate}[{\rm (i)}]
\item $P_\mu = P_\nu$ for all $\mu, \nu \in \Omega(H,M,k)$.
Let $P(H,M,k)$ denote the unique R-parabolic subgroup of $G$ so defined.
Then $H\subseteq P(H,M,k)$ and $R_u(P(H,M,k))(k)$
acts simply transitively on $\Omega(H,M,k)$.
\item For $g\in G'(k)$ we have
$\Omega(gHg^{-1},gMg^{-1},k)=  g\cdot\Omega(H,M,k)$
and $P(gHg^{-1},gMg^{-1},k)=  gP(H,M,k)g^{-1}$.
If $g\in G(k)$ normalizes $H$,
then $g\in P(H,M,k)$.
\item If $\mu\in\Omega(H,M,k)$, then $\dim C_G(c_\mu(H))\ge\dim C_G(M)$.
If $M$ is $G$-conjugate to $H$, then $\Omega(H,M,k)=\{0\}$ and $P(H,M,k)= G$. If $M$
is not $G$-conjugate to $H$, then $H$ is not contained
in any R-Levi subgroup of $P(H,M,k)$.
\end{enumerate}
\end{thm}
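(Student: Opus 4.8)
The strategy is to read off all three parts from Theorem~\ref{thm:kempfrousseau}, applied with $X=H^n$ and $S=S_n(M)$, supplemented by the generic-tuple calculus of Lemma~\ref{lem:generictuple} and the dimension criteria of Theorem~\ref{thm:orbclosconjcrit}(ii). The hypothesis supplies $\lambda\in Y_k(G)$ with $H\subseteq P_\lambda$ and $c_\lambda(H)=M$, so $c_\lambda(H^n)\subseteq M^n\subseteq S_n(M)$; as recorded in Definition~\ref{def:SnM}, this says $H^n$ is uniformly $S_n(M)$-unstable over $k$, so Theorem~\ref{thm:kempfrousseau} applies and in particular $\Omega(H,M,k)=\Omega(H^n,S_n(M),k)$ is non-empty.

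Part (i) is then a direct translation: the coincidence $P_\mu=P_\nu$ for $\mu,\nu\in\Omega(H,M,k)$ and the simple transitivity of $R_u(P(H,M,k))(k)$ on $\Omega(H,M,k)$ are exactly Theorem~\ref{thm:kempfrousseau}(ii) and (iv), while $H\subseteq P(H,M,k)$ holds because any $\mu\in\Omega(H,M,k)\subseteq\Lambda(H^n,k)$ has $\lim_{a\to0}\mu(a)\cdot(h,1,\dots,1)$ existing for every $h\in H$, i.e.\ $H\subseteq P_\mu$. For part (ii) I would first note the identities $g\cdot H^n=(gHg^{-1})^n$ and $g\cdot S_n(M)=S_n(gMg^{-1})$ for $g\in G'(k)$; the second holds because $G$ is normal in $G'$, so conjugation by $g$ carries $G\cdot M^n$ onto $G\cdot(gMg^{-1})^n$ and commutes with closure, and moreover $gMg^{-1}=c_{g\cdot\lambda}(gHg^{-1})$ with $g\cdot\lambda\in Y_k(G)$ and $gHg^{-1}\subseteq P_{g\cdot\lambda}$, so $\Omega(gHg^{-1},gMg^{-1},k)$ is defined. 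The two displayed equalities of (ii) are then Theorem~\ref{thm:kempfrousseau}(iii). Finally, if $g\in G(k)$ normalizes $H$ then it normalizes $H^n$, so $g\in N_{G(k)}(H^n)$, and since $S_n(M)$ is $G$-stable the last clause of Theorem~\ref{thm:kempfrousseau}(iv) gives $g\in P(H^n,S_n(M),k)(k)=P(H,M,k)(k)$.

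For part (iii), fix a generic tuple $\tuple{h}\in H^n$ of $H$. For the first assertion, let $\mu\in\Omega(H,M,k)$; by Theorem~\ref{thm:kempfrousseau}(i), $\tuple{m}:=c_\mu(\tuple{h})=\lim_{a\to0}\mu(a)\cdot\tuple{h}$ lies in $S_n(M)=\ovl{G\cdot M^n}$, and by Lemma~\ref{lem:generictuple}(iii) it is a generic tuple of $c_\mu(H)$, so $\dim C_G(c_\mu(H))=\dim C_G(\tuple{m})$ by Lemma~\ref{lem:generictuple}(i). Since the orbit-dimension function is lower semicontinuous on $S_n(M)$ and $G\cdot M^n$ is dense there, there exists $x\in M^n$ with $\dim G\cdot x\ge\dim G\cdot\tuple{m}$; as $x$ generates a subgroup of $M$ we get $C_G(M)\subseteq C_G(x)$, hence $\dim G\cdot\tuple{m}\le\dim G-\dim C_G(M)$, which is the required inequality $\dim C_G(c_\mu(H))\ge\dim C_G(M)$. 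If $M$ is $G$-conjugate to $H$, then $H^n\subseteq G\cdot M^n\subseteq S_n(M)$, so Definition~\ref{def:optimalclass} gives $\Omega(H,M,k)=\{0\}$ and hence $P(H,M,k)=P_0=G$.

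The remaining assertion of (iii) is, I expect, the real work: if $M$ is not $G$-conjugate to $H$, then $H$ lies in no R-Levi subgroup of $P:=P(H,M,k)$. Suppose it did, and choose $\nu\in\Omega(H,M,k)$, so $P=P_\nu$ and $H\subseteq P_\nu$ is contained in an R-Levi subgroup of $P_\nu$; this is precisely condition (c) of Theorem~\ref{thm:orbclosconjcrit}(ii) with its cocharacter taken to be $\nu$, so condition (d) holds, i.e.\ $\dim C_G(c_\nu(H))=\dim C_G(H)$. Combining with the first assertion of (iii) yields $\dim C_G(H)\ge\dim C_G(M)$, whereas Theorem~\ref{thm:orbclosconjcrit}(ii) applied to the original $\lambda$ (with $M=c_\lambda(H)$) gives $\dim C_G(M)\ge\dim C_G(H)$; hence $\dim C_G(M)=\dim C_G(H)$, and the equivalence (a)$\Leftrightarrow$(d) of Theorem~\ref{thm:orbclosconjcrit}(ii) forces $M$ to be $G$-conjugate to $H$, a contradiction. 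The genuinely geometric step --- establishing $\dim C_G(c_\mu(H))\ge\dim C_G(M)$ via semicontinuity of orbit dimensions --- together with this interplay of the two dimension inequalities are the two places where some care is needed; everything else is bookkeeping with results already in hand.
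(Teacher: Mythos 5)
Your proposal is correct and follows essentially the same route as the paper: parts (i) and (ii) are read off from Theorem~\ref{thm:kempfrousseau} applied to $X=H^n$, $S=S_n(M)$, and part (iii) combines the semicontinuity of $\tuple{m}\mapsto\dim C_G(\tuple{m})$ on $S_n(M)$ with Lemma~\ref{lem:generictuple} and Theorem~\ref{thm:orbclosconjcrit}(ii) applied to both $\lambda$ and the optimal cocharacter. The only cosmetic differences are that you phrase the semicontinuity step via density of $G\cdot M^n$ in $S_n(M)$ rather than citing upper semicontinuity of centralizer dimension directly, and you arrange the final assertion of (iii) as a contradiction instead of deriving the strict inequalities $\dim C_G(c_\mu(H))\ge\dim C_G(M)>\dim C_G(H)$ directly.
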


\begin{proof}
(i) and (ii). Clearly, $H^n$ is uniformly $S_n(M)$-unstable over $k$, so $\Omega(H,M,k)$ is well-defined.  If $\mu\in \Omega(H,M,k)$, then $\lim_{a\ra 0} \mu(a)\cdot \tuple{h}$ exists for all $\tuple{h}\in H^n$, so $H\subseteq P_\mu= P(H,M,k)$.  The rest follows immediately from Theorem \ref{thm:kempfrousseau}.

(iii).
We have $\dim C_G(\tuple{m})\ge\dim C_G(M)$ for all $\tuple{m}\in G\cdot M^n$.
Since $\tuple{m}\mapsto\dim C_G(\tuple{m})$ is upper semi-continuous,
cf.~\cite[Lem.~3.7(c)]{newstead}, this inequality holds
for all $\tuple{m}\in S_n(M)$.

Let $\mu\in\Omega(H,M,k)$. Let $\tuple{h}\in H^n$ be a generic tuple of $H$.
Then $c_\mu(\tuple{h})$ is a generic tuple of $c_\mu(H)$,
by Lemma~\ref{lem:generictuple}(iii).
So $\dim C_G(c_\mu(H))=\dim C_G(c_\mu(\tuple{h}))\ge\dim C_G(M)$,
since $c_\mu(\tuple{h})\in S_n(M)$.

It follows easily from the definitions that $P(H,M,k)=G$
if and only if $\Omega(H,M,k)=\{0\}$ if and only if $H^n\subseteq S_n(M)$.
Clearly, the latter is the case if $M$ is $G$-conjugate to $H$.
Now assume that $M$ is not $G$-conjugate to $H$ and pick
$\mu\in\Omega(H,M,k)$. Then $\dim C_G(M)>\dim C_G(H)$,
by Theorem~\ref{thm:orbclosconjcrit}(ii) (applied to $\lambda$).
So $\dim C_G(c_\mu(H))>\dim C_G(H)$, by the above and $H$ is not
contained in any R-Levi subgroup of $P(H,M,k)$,
by Theorem~\ref{thm:orbclosconjcrit}(ii) (applied to $\mu$).
\end{proof}

\begin{defn}
\label{defn:optpar}
We call $\Omega(H,M,k)$ the
\emph{optimal class for $H$ with respect to $M$ over $k$}
and we call $P(H,M,k)$ the
\emph{optimal destabilizing R-parabolic subgroup for $H$ with respect to $M$ over $k$}.
Assume the $G$-conjugacy class given by Proposition~\ref{prop:uniquegcr}
contains a group $M$ of the form $c_\lambda(H)$
for some $\lambda\in Y_k(G)$.
Then we set $\Omega(H,k):= \Omega(H,M,k)$ and $P(H,k):= P(H,M,k)$.
Under this assumption we have, by Proposition~\ref{prop:uniquegcr}
and Theorem~\ref{thm:optpar}, that $N_{G(k)}(H)$ is contained in $P(H,k)$
and that for $\mu\in\Omega(H,k)$, $c_\mu(H)$ is $G$-completely reducible.
So, by Theorem~\ref{thm:orbclosconjcrit}(ii), if $H$ is not $G$-completely reducible, then it is not contained in any R-Levi subgroup of $P(H,k)$. Note that, trivially, $P(H,k)=G$ if $H$ is $G$-completely reducible.
We call $\Omega(H,k)$ the
\emph{optimal class for $H$ over $k$}
and we call $P(H,k)$ the
\emph{optimal destabilizing R-parabolic subgroup for $H$ over $k$}.  We suppress the dependence on the choice of $n$ and $\left\|\,\right\|$ in the notation (cf.\ Remark \ref{rem:independence}).

Note that the assumption of the previous paragraph is satisfied if $k$ is algebraically closed.
In that case we usually suppress
the $k$ argument and write simply $\Omega(H)$ and $P(H)$ instead; we refer to these as the
\emph{optimal class for $H$}
and the
\emph{optimal destabilizing R-parabolic subgroup for $H$},
respectively.
\end{defn}

We now suppose that the fixed
norm $\left\|\,\right\|$ on $Y(G)$
is $k$-defined, cf.\ Definition \ref{def:norm}.
We get the following rationality result.

\begin{thm}
\label{thm:potrat}
Let $G$, $G'$, $H$ and $n$ be as in Theorem~\ref{thm:optpar}
and assume that $H$ is $k$-closed.
Then the following hold:
\begin{enumerate}[{\rm (i)}]
\item Suppose that $M$ is a subgroup of $G$ such that
$M=c_\lambda(H)$ for some $\lambda\in Y_{k_s}(G)$ with
$H\subseteq P_\lambda$ and such that $S_n(M)$ is $k$-defined
(this is the case in particular if $M$ is $k$-defined).
Then $\Omega(H,M,k)$ is well-defined and equal to $\Omega(H,M,k_s)\cap Y_k(G)$.  Moreover, $P(H,M,k)$ is well-defined and equal to $P(H,M,k_s)$.  In particular, $P(H,M,k_s)$ is $k$-defined.
\item If $k$ is perfect, then $\Omega(H,k)$ is well-defined and equal to $\Omega(H)\cap Y_k(G)$.  Moreover, $P(H,k)$ is well-defined and equal to $P(H)$.  In particular, $P(H)$ is $k$-defined.
\end{enumerate}
\end{thm}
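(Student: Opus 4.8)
The plan is to derive both parts from the rationality theory of uniform $S$-instability developed in Theorem~\ref{thm:kr-rationality} and Corollary~\ref{cor:sepopt}, applied to $X=H^n$ and $S=S_n(M)$ inside the $k$-defined affine $G$-variety $G^n$ (simultaneous conjugation action), the basic point being that, since $H$ is $k$-closed, $H^n$ is a $\Gamma$-stable closed subset of $G^n$ and hence $(H^n)^k=H^n$. For part~(i): since $H\subseteq P_\lambda$ we have $\lim_{a\to0}\lambda(a)\cdot\tuple{h}=c_\lambda(\tuple{h})\in M^n\subseteq S_n(M)$ for every $\tuple{h}\in H^n$, so $\lambda\in Y_{k_s}(G)$ destabilizes $H^n$ into $S_n(M)$ and $H^n$ is uniformly $S_n(M)$-unstable over $k_s$; by Theorem~\ref{thm:kr-rationality}(i) and $(H^n)^k=H^n$ it is then uniformly $S_n(M)$-unstable over $k$, so $\Omega(H,M,k)=\Omega(H^n,S_n(M),k)$ is non-empty, hence well-defined. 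Now $S_n(M)$ is $k$-defined by hypothesis (in particular when $M$ is $k$-defined, since then $G\cdot M^n$ is $k$-defined and so is its closure), so Corollary~\ref{cor:sepopt} with $k_1=k_s$ gives $\Omega(H,M,k)=\Omega(H,M,k_s)\cap Y_k(G)$ and $P(H,M,k)=P(H,M,k_s)$; and $P(H,M,k)=P(H^n,S_n(M),k)$ is $k$-defined by Theorem~\ref{thm:kempfrousseau}(ii), hence so is $P(H,M,k_s)$.

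For part~(ii) we have $k_s=\ovl{k}$, and $H$, being $k$-closed with $k$ perfect, is $k$-defined (Galois criterion, all closed points being separable). The whole content is to verify the hypothesis of Definition~\ref{defn:optpar}: that the $G$-conjugacy class of $G$-completely reducible subgroups attached to $H$ by Proposition~\ref{prop:uniquegcr} contains a group of the form $c_\lambda(H)$ with $\lambda\in Y_k(G)$. Granting this, such an $M=c_\lambda(H)$ is $k$-defined (image of the $k$-defined group $H$ under the $k$-defined homomorphism $c_\lambda$, and closed), so $S_n(M)$ is $k$-defined and part~(i) applies with $k_s=\ovl{k}$; since $S_n(M)$, and therefore $\Omega(H,M,\cdot)$ and $P(H,M,\cdot)$, depend on $M$ only through its $G$-conjugacy class, this yields $\Omega(H,k)=\Omega(H,M,k)=\Omega(H,M,\ovl{k})\cap Y_k(G)=\Omega(H)\cap Y_k(G)$ and $P(H,k)=P(H,M,k)=P(H,M,\ovl{k})=P(H)$, the last of which is $k$-defined by part~(i).

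To prove that hypothesis I would show, by induction on $\dim G$, that for every $k$-defined reductive group $G$ and every $k$-closed subgroup $H$, the canonical $G$-conjugacy class of $H$ contains a group $c_\lambda(H)$ with $\lambda\in Y_k(G)$ and $H\subseteq P_\lambda$. If $H$ is $G$-completely reducible, take $\lambda=0$ and $M=H$ (Proposition~\ref{prop:uniquegcr}(i) with $\lambda=0$). If $H$ is not $G$-cr, then $H$ is not $G$-cr over $k$ (as $k$ is perfect and $H$ is $k$-defined; see the remark after Lemma~\ref{lem:Gcroverk}), so by Lemma~\ref{lem:Gcroverk} there is $\lambda_1\in Y_k(G)$ with $H\subseteq P_{\lambda_1}$ but with no $k$-defined R-Levi subgroup of $P_{\lambda_1}$ containing $H$; in particular $P_{\lambda_1}^0\neq G^0$ (else $R_u(P_{\lambda_1})=\{1\}$ and $P_{\lambda_1}=L_{\lambda_1}$ would itself be such an R-Levi), so $\dim L_{\lambda_1}<\dim G$. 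The group $H_1:=c_{\lambda_1}(H)\subseteq L_{\lambda_1}$ is $k$-closed in the $k$-defined reductive group $L_{\lambda_1}$ ($\Gamma$-stable since $H$ is and $\lambda_1$ is $k$-defined, and closed as the image of a homomorphism of algebraic groups), so by induction there is $\nu\in Y_k(L_{\lambda_1})\subseteq Y_k(G)$ with $H_1\subseteq P_\nu$ and $c_\nu(H_1)$ being $L_{\lambda_1}$-completely reducible, hence $G$-completely reducible by \cite[Cor.~3.22]{BMR}. Since $\nu\in Y(L_{\lambda_1})$ commutes with $\lambda_1$, Lemma~\ref{lem:dblecochar} (together with \cite[Lem.~6.2]{BMR}) provides, for $t\in{\mathbb N}$ large, a cocharacter $\gamma:=t\lambda_1+\nu\in Y_k(G)$ with $H\subseteq P_\gamma$ and $c_\gamma(H)=c_\nu(c_{\lambda_1}(H))=c_\nu(H_1)$; thus $M:=c_\gamma(H)$ is $G$-completely reducible, of the required form with $\gamma\in Y_k(G)$, and it lies in the canonical class of $H$ by Proposition~\ref{prop:uniquegcr}(iv). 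This closes the induction.

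The step I expect to be the main obstacle is precisely this last one --- producing a $k$-defined cocharacter $\lambda$ with $c_\lambda(H)$ $G$-completely reducible --- everything else being routine bookkeeping with Theorems~\ref{thm:kr-rationality} and~\ref{thm:kempfrousseau}, where the only thing to watch is that in each application of part~(i) the subvariety $S_n(M)$ is genuinely $k$-defined (which holds because $M$ is $k$-defined, or because the canonical class is $\Gamma$-stable by Proposition~\ref{prop:uniquegcr}(iii)).
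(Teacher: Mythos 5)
Your part (i) is essentially the paper's proof: it is exactly the appeal to Theorem~\ref{thm:kr-rationality} (via Corollary~\ref{cor:sepopt}) with $X=H^n$, $X^k=H^n$ and $S=S_n(M)$, so nothing to add there. For part (ii) you diverge. The paper's argument is much shorter: it takes $M=c_\lambda(H)$ from Proposition~\ref{prop:uniquegcr} with $\lambda$ merely in $Y(G)=Y_{k_s}(G)$, observes that $S_n(M)$ is $\Gamma$-stable by Proposition~\ref{prop:uniquegcr}(iii) and hence $k$-defined because $k$ is perfect, and then applies part (i); here ``well-defined'' is read as the assertion that $H^n$ is uniformly $S_n(M)$-unstable over $k$, so that $\Omega(H,k)=\Omega(H^n,S_n(M),k)$ and $P(H,k)$ make sense (recall that $S_n(M)$ depends only on the $G$-conjugacy class of $M$). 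You instead verify the literal hypothesis of Definition~\ref{defn:optpar} --- that the canonical class contains $c_\lambda(H)$ for some $\lambda\in Y_k(G)$ --- by an induction on $\dim G$ resting on \cite[Thm.~5.8]{BMR} (equivalence of $G$-cr and $G$-cr over $k$ for perfect $k$), Lemma~\ref{lem:Gcroverk}, Lemma~\ref{lem:dblecochar} and the Levi reduction from \cite{BMR}. That induction is correct (the only slip is that the last step should invoke the uniqueness statement in Proposition~\ref{prop:uniquegcr}(i) rather than (iv), which is cosmetic), and it buys a genuinely stronger conclusion, namely that the hypothesis of Definition~\ref{defn:optpar} holds over any perfect field; but it is considerably heavier than what the theorem needs, and it imports \cite[Thm.~5.8]{BMR}, which the paper later re-derives from Theorem~\ref{thm:potrat}(ii) in Example~\ref{exmp:obstacle} --- so your route would render that particular application circular, whereas the paper's does not. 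Note also that the stronger statement you prove can be obtained more cheaply a posteriori: once part (i) gives $\Omega(H,M,k)\ne\varnothing$, any $\mu\in\Omega(H,M,k)$ lies in $Y_k(G)$ and $c_\mu(H)$ is $G$-completely reducible (combine Theorem~\ref{thm:optpar}(iii), Proposition~\ref{prop:uniquegcr}(iv) and Theorem~\ref{thm:orbclosconjcrit}(ii)), so no induction on $\dim G$ is required.
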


\begin{proof}
(i).\ This follows immediately from Theorem \ref{thm:kr-rationality}.

(ii).\ Since $k$ is perfect, $k_s=\ovl k$.  Let $M$ be as in Proposition \ref{prop:uniquegcr}; then $H$ is uniformly $S_n(M)$-unstable over $\ovl{k}$.  Now $S_n(M)$ is $\Gamma$-stable by Proposition \ref{prop:uniquegcr}(iii) and hence is $k$-defined, since $k$ is perfect.  The result now follows from (i).
\end{proof}

\begin{rems}
\label{rems:optpar}
(i). Let $M$ be as in Theorem~\ref{thm:optpar}
and let $M_0$ be a $G$-cr subgroup from the
$G$-conjugacy class associated to $H$ by
Proposition~\ref{prop:uniquegcr}. Then we
have $S_n(M_0)\subseteq S_n(M)$ for any $n$.
To prove this we may, by the final assertion
in Proposition~\ref{prop:uniquegcr}, assume
that $M=H$. Furthermore, we may assume that
$M_0=c_\lambda(H)$ for some $\lambda\in Y(G)$
with $H\subseteq P_\lambda$.
Since $\lambda(a)\cdot H^n\subseteq G\cdot H^n$
for all $a\in k^*$, we have $M_0^n= c_\lambda(H^n)\subseteq S_n(H)$.
So $S_n(M_0)\subseteq S_n(H)$.

(ii). Note that $G\cdot M^n$ need not be closed:
e.g., take $G$ connected and non-abelian, $n$ to be 1 and $H = M$
to be a maximal torus of $G$.
\end{rems}

\begin{exmp}
We give an example of the usefulness of this construction
(cf.\ \cite{martin2} and \cite[Thm.~3.10]{BMR}).
Let $H$ be a $G$-completely reducible subgroup of
$G$ and let $N$ be a normal subgroup of $H$.
We prove that $N$ is $G$-completely reducible.
Suppose not.  Then $H\subseteq N_G(N)\subseteq P(N)$.
%where $P$ is the R-parabolic subgroup given in
%Definition~\ref{defn:optpar} for the
%non-$G$-completely reducible subgroup $N$, where $G'=G$.
Since $N$ is not contained in an R-Levi subgroup of $P(N)$,
neither is $H$.  But $H$ is assumed to be $G$-completely reducible,
so this is impossible.  We deduce that $N$ is $G$-completely reducible
after all.
\end{exmp}
Here is a second example, which illustrates
the gap in the theory pointed
out in the Introduction.

\begin{exmp}
\label{exmp:obstacle}
Assume $k$ is perfect and $H$ is a $k$-defined subgroup of $G$.
Suppose $H$ is not $G$-completely reducible. Then $H$ is
not contained in any R-Levi subgroup of the optimal
destabilizing R-parabolic subgroup $P(H)$ of $G$.
Now $P(H)=P_\lambda$ for some $\lambda\in Y_k(G)$,
by Theorem~\ref{thm:potrat}(ii), so $H$ is not
$G$-completely reducible over $k$. This proves the
forward direction of \cite[Thm.~5.8]{BMR}.
The proof of the reverse direction given in
\emph{loc.\ cit.} is essentially just a special case
of the proof of Theorem~\ref{thm:sepRuconj}.

One deduces from the above as in \cite[Thm.~5.8]{BMR}
that if $k_1/k$ is a separable algebraic extension of
fields and $G$ and $H$ are $k$-defined, then,
under the hypothesis that $k$ is perfect,
$H$ is $G$-completely reducible over $k_1$
if and only if $H$ is $G$-completely reducible over $k$.
We answered the forward direction of Serre's question Theorem \ref{thm:serresquestion}
without the hypothesis that $k$ is perfect.  We cannot answer the
reverse direction by passing to $\ovl{k}$ using the argument in
the previous paragraph: for $H$ can be $G$-completely reducible
over $k$ (or $k_1$) and yet not $G$-completely reducible over
$\ovl{k}$, or vice versa (see Remark~\ref{rem:obstacle}).
To give a direct proof that the reverse implication holds, one would like to
associate an ``optimal destabilizing R-parabolic subgroup''
$P$ to $H$ having the property that $P$ is defined over $k_1$
and no R-Levi $k_1$-subgroup of $P$ contains $H$; optimality
should imply that $P$ is $\Gal(k_1/k)$-stable and hence $k$-defined, which would show that $H$ is not $G$-completely reducible over $k$.
We cannot take $P$ to be $P(H,c_\mu(H),k_1)$ for any $\mu\in Y_{k_1}(H)$,
because if $H$ is $G$-completely reducible,
then $P(H,c_\mu(H),k_1)$ is just $G$.
\end{exmp}

\begin{rem}
\label{rem:independence}
The construction of the optimal class of $k$-defined
cocharacters and the optimal destabilizing R-parabolic subgroup $P(H)$
from Definition \ref{defn:optpar}
depends on the choice of $n$ and the choice of norm $\left\|\,\right\|$.
In view of \cite[Sec.~7]{He} it is plausible that
this construction is independent of these choices.
Since the results we obtain here are
sufficient for our applications in the present
and subsequent sections, we do not pursue this question here and
leave it instead to a future study.
\end{rem}

\subsection{Counterparts for Lie subalgebras}
\label{sub:Lie}
There are counterparts to our results
for Lie subalgebras $\hh$ of the Lie algebra $\gg = \Lie G$ of $G$.
All of our results carry over with obvious modifications.
For instance, if $\hh$ is not $G$-completely reducible,
then there is an optimal destabilizing parabolic subgroup
$P$ of $G$ such that $\hh\subseteq \pp$ but $\hh\not\subseteq \frakl$
for any R-Levi subgroup $L$ of $P$, see Theorem \ref{thm:optparLie} below.
Many of the proofs are actually easier
in the Lie algebra case: for example, it often suffices to work in
connected $G$.
We just state the counterparts of
Theorems \ref{thm:orbclosconjcrit} and \ref{thm:optpar}
in this Lie algebra setting.
We leave the details of the proofs to the reader.

For a subgroup $H$ of $G$ we denote its Lie algebra $\Lie H$ by $\hh$.
We start with the analogue of
Definition \ref{def:gcr} in this setting, cf.\ \cite{mcninch};
see also \cite[Sec.\ 3.3]{BMRT:relative}.

\begin{defn}
\label{def:relLieGcr}
A subalgebra $\hh$ of $\gg$
is \emph{$G$-completely reducible} if for any
R-parabolic subgroup $P$ of $G$ such that $\hh\subseteq \pp$,
there is an R-Levi subgroup $L$ of $P$ such that $\hh\subseteq \frakl$.
\end{defn}

We require some standard facts concerning
Lie algebras of R-parabolic and R-Levi subgroups of $G$
(cf.~\cite[Sec.~2.1]{rich}).

\begin{lem}
\label{lem:liealgebrasofRpars}
For $\lambda\in Y(G)$, put $\pp_\lambda=\Lie(P_\lambda)$ and
$\frakl_\lambda=\Lie(L_\lambda)$.
Let $x\in\gg$. Then
\begin{enumerate}[{\rm(i)}]
\item $x\in\pp_\lambda$ if and only if $\underset{a\to 0}{\lim}\,
\lambda(a)\cdot x$ exists;
\item $x\in\frakl_\lambda$ if and only if $\underset{a\to 0}{\lim}\,
\lambda(a)\cdot x$ exists and equals $x$ if and only if $\lambda(\ovl{k})$ centralizes $x$;
\item $x\in\Lie(R_u(P_\lambda))$ if and only if $\underset{a\to 0}{\lim}\,
\lambda(a)\cdot x$ exists and equals $0$.
\end{enumerate}
\end{lem}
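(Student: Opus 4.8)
The plan is to regard $\gg=\Lie G$ as a rational $G$-module under the adjoint action, so that the weight-space formalism of Remark~\ref{rem:linear} applies verbatim. Fixing $\lambda\in Y(G)$ and writing $\gg_{\lambda,\geq 0}$, $\gg_{\lambda,0}$, $\gg_{\lambda,>0}$ for the subspaces of Remark~\ref{rem:linear} (for the module $\gg$), that remark tells us that $\gg_{\lambda,\geq 0}$ is precisely the set of $x\in\gg$ for which $\lim_{a\to 0}\lambda(a)\cdot x$ exists, $\gg_{\lambda,0}$ is precisely the set of $x$ for which this limit exists and equals $x$, and $\gg_{\lambda,>0}$ is precisely the set of $x$ for which it exists and equals $0$; moreover, straight from the definition, $x\in\gg_{\lambda,0}$ if and only if $\lambda(\ovl{k}^*)$ centralizes $x$. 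Hence all three parts of the lemma, including the extra equivalence in (ii), follow once we prove
\[
\pp_\lambda=\gg_{\lambda,\geq 0},\qquad \frakl_\lambda=\gg_{\lambda,0},\qquad \Lie(R_u(P_\lambda))=\gg_{\lambda,>0}.
\]

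To establish these identifications I would first reduce to the case $G$ connected: $\gg=\Lie(G^0)$, $R_u(P_\lambda)=R_u(P_\lambda^0)$, $P_\lambda\cap G^0=P_\lambda(G^0)$ and $L_\lambda\cap G^0=L_\lambda(G^0)$, so the three Lie algebras are unaffected by replacing $G$ with $G^0$. Assuming $G$ connected, fix a maximal torus $T$ with $\lambda\in Y(T)$ and use the root-space decomposition $\gg=\frakt\oplus\bigoplus_{\alpha\in\Psi}\gg_\alpha$, on which $\Ad(\lambda(a))$ acts by $a^{\langle\lambda,\alpha\rangle}$ on $\gg_\alpha$ and trivially on $\frakt=\Lie T$; thus $\gg_{\lambda,\geq 0}=\frakt\oplus\bigoplus_{\langle\lambda,\alpha\rangle\geq 0}\gg_\alpha$, and similarly for $\gg_{\lambda,0}$ and $\gg_{\lambda,>0}$. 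Now invoke the standard structure of $P_\lambda$, $L_\lambda$ and $R_u(P_\lambda)$ for connected reductive groups (cf.~\cite[Sec.~2.1]{rich}): $P_\lambda$ is generated by $T$ and the $U_\alpha$ with $\langle\lambda,\alpha\rangle\geq 0$, $L_\lambda$ by $T$ and the $U_\alpha$ with $\langle\lambda,\alpha\rangle=0$, and $R_u(P_\lambda)$ is, as a variety, the product of the $U_\alpha$ with $\langle\lambda,\alpha\rangle>0$, so that $\dim P_\lambda=\dim T+\#\{\alpha:\langle\lambda,\alpha\rangle\geq 0\}$ and likewise for $L_\lambda$ and $R_u(P_\lambda)$. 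Since $\Lie T=\frakt$ and $\Lie U_\alpha=\gg_\alpha$, the Lie algebra $\pp_\lambda$ contains $\gg_{\lambda,\geq 0}$; and since all the groups here are smooth, $\dim\pp_\lambda=\dim P_\lambda=\dim\gg_{\lambda,\geq 0}$, forcing equality. The other two identifications follow identically.

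I do not anticipate a genuine obstacle: the content is standard and could in fact simply be attributed to \cite[Sec.~2.1]{rich}. The only steps needing a little attention are the reduction to connected $G$ and the appeal to the structure of parabolic and Levi subgroups; after that it is a dimension count built on $\Lie T=\frakt$ and $\Lie U_\alpha=\gg_\alpha$. If one prefers to avoid root-space bookkeeping, one can instead use $L_\lambda=C_G(\lambda(\ovl{k}^*))$ (whose Lie algebra is the $\lambda(\ovl{k}^*)$-fixed space $\gg_{\lambda,0}$) together with the decomposition $P_\lambda=L_\lambda\ltimes R_u(P_\lambda)$ and the inclusion $\Lie(R_u(P_\lambda))\subseteq\gg_{\lambda,>0}$ obtained by differentiating the relation $\lim_{a\to 0}\lambda(a)g\lambda(a)^{-1}=1$, after which a dimension count again closes the argument.
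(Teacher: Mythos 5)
Your proof is correct. The paper does not prove this lemma at all: it records the statements as standard facts with a citation to \cite[Sec.~2.1]{rich} (and the surrounding subsection explicitly leaves such details to the reader), and your argument --- reducing to $G$ connected and identifying $\pp_\lambda$, $\frakl_\lambda$ and $\Lie(R_u(P_\lambda))$ with the weight spaces $\gg_{\lambda,\ge 0}$, $\gg_{\lambda,0}$ and $\gg_{\lambda,>0}$ of the adjoint module via the root-space decomposition and a dimension count (using smoothness, so $\dim\Lie H=\dim H$) --- is precisely the standard argument that reference supplies.
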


The map $c_\lambda : \pp_\lambda \to \frakl_\lambda$ given
by $x \mapsto \underset{a\to 0}{\lim}\, \lambda(a)\cdot x$ coincides with
the usual projection of $\pp_\lambda$ onto $\frakl_\lambda$.
In analogy with the construction for subgroups of $G$, we consider the action
of $G$ on $\gg^n$ by simultaneous adjoint action.

\begin{rem}
\label{rem:uniqueLiegcr}
 A statement analogous to Proposition \ref{prop:uniquegcr}
holds for Lie algebras: that is, given any Lie subalgebra $\hh$ of $\gg$,
we can find a uniquely defined $G$-conjugacy class of subalgebras of $\gg$
which contains $c_\lambda(\hh)$ for some  $\lambda \in Y(G)$, each member of which
is $G$-cr.
\end{rem}

\begin{thm}\
\label{thm:orbclosconjcritLie}
\begin{enumerate}[{\rm (i)}]
\item Let $n\in {\mathbb N}$, let $\tuple{h}\in \gg^n$ and
let $\lambda\in Y(G)$ such that
$\tuple{m}:=\lim_{a\ra 0}\lambda(a)\cdot \tuple{h}$ exists.
Then the following are equivalent:
\begin{enumerate}[{\rm (a)}]
\item $\tuple{m}$ is $G$-conjugate
to $\tuple{h}$;
\item $\tuple{m}$ is $R_u(P_\lambda)$-conjugate to $\tuple{h}$;
\item $\dim G\cdot\tuple{m}=\dim G\cdot\tuple{h}$.
\end{enumerate}
\item Let $\hh$ be a subalgebra of $\gg$ and let $\lambda\in Y(G)$.
Suppose $\hh\subseteq \pp_\lambda$ and set $\mm = c_\lambda(\hh)$.
Then $\dim C_G(\mm)\geq \dim C_G(\hh)$ and the following are equivalent:
\begin{enumerate}[{\rm (a)}]
\item $\mm$ is $G$-conjugate to $\hh$;
\item  $\mm$ is $R_u(P_\lambda)$-conjugate to $\hh$;
\item  $\hh$ is contained in the Lie algebra of
an R-Levi subgroup of $P_\lambda$;
\item $\dim C_G(\mm)=\dim C_G(\hh)$.
\end{enumerate}
\item Let $\hh$, $\lambda$ and $\mm$ be as in (ii) and let
$\tuple{h}\in \hh^n$ be a generating tuple of $\hh$. Then the
assertions in (i) are equivalent to those in (ii).
In particular, $\hh$ is $G$-completely reducible
if and only if $G\cdot\tuple{h}$ is closed in $\gg^n$.
\end{enumerate}
\end{thm}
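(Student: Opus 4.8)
The plan is to transcribe the proof of Theorem~\ref{thm:orbclosconjcrit} essentially line by line, replacing each group-theoretic ingredient by its Lie algebra analogue. The point that makes this possible is that $\gg^n$, with $G$ acting diagonally by the adjoint action, is an affine $\ovl{k}$-variety, so that Lemma~\ref{lem:Ruconj} and Theorem~\ref{thm:Ruconj} apply directly with $V = \gg^n$ (taking $k = \ovl{k}$, which is perfect). The only genuinely new inputs, replacing Lemma~\ref{lem:generictuple}, are the following two elementary facts, which I would record first: if $\tuple{h}\in\hh^n$ generates the subalgebra $\hh$, then $C_M(\tuple{h}) = C_M(\hh)$ for any subgroup $M$ of $G$ (the fixed-point set of $\Ad(g)$ on $\gg$ is a Lie subalgebra); and if moreover $\hh\subseteq\pp_\lambda$, then $c_\lambda(\tuple{h})$ generates $\mm := c_\lambda(\hh)$, since $c_\lambda\colon\pp_\lambda\to\frakl_\lambda$ is a homomorphism of Lie algebras (the differential of $c_\lambda\colon P_\lambda\to L_\lambda$, with kernel $\Lie R_u(P_\lambda)$). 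Throughout I would use Lemma~\ref{lem:liealgebrasofRpars} freely, in particular that $\hh\subseteq\pp_\lambda$ if and only if $\lim_{a\to 0}\lambda(a)\cdot\tuple{h}$ exists.

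For part (i): (b)$\Rightarrow$(a)$\Rightarrow$(c) is immediate. For (c)$\Rightarrow$(b) I would argue as in the group case: since $G\cdot\tuple{m}\subseteq\ovl{G\cdot\tuple{h}}$, equality of the orbit dimensions forces $\ovl{G\cdot\tuple{m}} = \ovl{G\cdot\tuple{h}}$ by irreducibility, hence $G\cdot\tuple{m} = G\cdot\tuple{h}$ since each orbit is open in its closure (\cite[Prop.~I.1.8]{Bo}); this is (c)$\Rightarrow$(a), and Theorem~\ref{thm:Ruconj} applied to $V = \gg^n$ then gives (b).

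For part (ii): fix a generating tuple $\tuple{h}\in\hh^n$ and put $\tuple{m} := c_\lambda(\tuple{h})$, which generates $\mm$. The inequality $\dim C_G(\mm)\ge\dim C_G(\hh)$ follows from $\dim G\cdot\tuple{m}\le\dim G\cdot\tuple{h}$ (\cite[Prop.~I.1.8]{Bo}) together with $\dim G\cdot\tuple{h} = \dim G - \dim C_G(\hh)$ and the analogous identity for $\tuple{m}$. For the equivalences, (b)$\Rightarrow$(a)$\Rightarrow$(d) is trivial. For (b)$\Leftrightarrow$(c): every R-Levi subgroup of $P_\lambda$ has Lie algebra $\Ad(u)\frakl_\lambda$ for some $u\in R_u(P_\lambda)$; using that $c_\lambda(\Ad(v)x) = c_\lambda(x)$ for $v\in R_u(P_\lambda)$ and $x\in\pp_\lambda$ (a consequence of Eqn~\eqref{eqn:repborel2} applied to the adjoint representation), one checks that $\hh\subseteq\Ad(u)\frakl_\lambda$ if and only if $\Ad(u\inverse)\hh = \mm$, which is exactly (b). Finally (d)$\Rightarrow$(b): from (d) the orbit dimensions of $\tuple{h}$ and $\tuple{m}$ agree, so by part (i) there is $u\in R_u(P_\lambda)$ with $\Ad(u)\tuple{h} = \tuple{m}$, and since $\Ad(u)$ is a Lie algebra automorphism carrying the generators of $\hh$ to those of $\mm$, we get $\Ad(u)\hh = \mm$. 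Part (iii) is then a routine dictionary: the centralizer identities of the first paragraph match up the dimension conditions, and $\Ad(u)$ matches up the conjugacy conditions, so the statements of (i) and (ii) become equivalent; and for the final assertion, Lemma~\ref{lem:liealgebrasofRpars}(i) says $\lim_{a\to 0}\lambda(a)\cdot\tuple{h}$ exists precisely when $\hh\subseteq\pp_\lambda$, the Hilbert--Mumford Theorem says $G\cdot\tuple{h}$ is closed in $\gg^n$ if and only if every such limit is $G$-conjugate to $\tuple{h}$, and since every R-parabolic subgroup of $G$ is some $P_\lambda$, the equivalence (ii)(a)$\Leftrightarrow$(ii)(c) turns this into the statement that $\hh$ is $G$-completely reducible.

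I do not expect a real obstacle here: the argument is a faithful copy of the proof of Theorem~\ref{thm:orbclosconjcrit}. The one place needing genuine care is the preparatory material in the first paragraph --- checking that $\gg^n$ with the adjoint action is an affine $G$-variety, so that Lemma~\ref{lem:Ruconj} and Theorem~\ref{thm:Ruconj} apply verbatim; that $c_\lambda$ is a Lie algebra homomorphism with kernel $\Lie R_u(P_\lambda)$; and that centralizers of generating tuples agree with centralizers of the generated subalgebra --- together with the small computation $c_\lambda(\Ad(v)x) = c_\lambda(x)$ used for (b)$\Leftrightarrow$(c) in part (ii).
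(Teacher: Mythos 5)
Your proposal is correct and follows exactly the route the paper intends: Theorem~\ref{thm:orbclosconjcritLie} is left to the reader as a transcription of the proof of Theorem~\ref{thm:orbclosconjcrit}, with your two preparatory facts (centralizers of a generating tuple equal centralizers of $\hh$, and $c_\lambda$ being a Lie algebra homomorphism sending a generating tuple of $\hh$ to one of $\mm$) playing the role of Lemma~\ref{lem:generictuple} --- indeed more simply, since a generating tuple of $\hh$ honestly generates. The only cosmetic point is that for disconnected $G$ the closure $\ovl{G\cdot\tuple{h}}$ need not be irreducible, but the fact you cite, \cite[Prop.~I.1.8]{Bo} (orbits are open in their closures, so the boundary is a union of lower-dimensional orbits), is exactly what gives (c)$\Rightarrow$(a) in (i), just as in the group case.
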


Note that the final statement of Theorem \ref{thm:orbclosconjcritLie}(iii)
is \cite[Thm.\ 1(1)]{mcninch}.

If $\hh$ is a Lie subalgebra of $\gg$ and $\hh \subseteq \pp_\lambda$
for $\lambda \in Y(G)$,
then setting $\mm := c_\lambda(\hh)$ and
$S_n(\mm) := \overline{G \cdot \mm^n}$, we get an optimal class
$\Omega(\hh^n,S_n(\mm))$ of cocharacters, as in Definition \ref{def:SnM}.

\begin{thm}
\label{thm:optparLie}
Let $G$, $G'$ and $\left\|\,\right\|$ be as in
Theorem \ref{thm:optpar}.
Let $\hh$ be any subalgebra of $\gg$ and let
$n\in {\mathbb N}$ such that $\hh^n$ contains a generating tuple of $\hh$.
Let $\mm$ be a subalgebra of $\gg$ and suppose that
$\mm = c_\lambda(\hh)$ for some $\lambda\in Y_k(G)$
with $\hh \subseteq \pp_\lambda$. Put
$\Omega(\hh,\mm,k) := \Omega(\hh^n,S_n(\mm),k)$.
Then the following hold:
\begin{enumerate}[{\rm (i)}]
\item $P_\mu = P_\nu$ for all $\mu, \nu \in \Omega(\hh,\mm,k)$.
Let $P(\hh,\mm,k)$ denote the unique R-parabolic subgroup of $G$ so defined.
Then $\hh \subseteq \Lie(P(\hh,\mm,k))$ and $R_u(P(\hh,\mm,k))(k)$
acts simply transitively on $\Omega(\hh,\mm,k)$.
\item For $g\in G'(k)$ we have
$\Omega(g \cdot \hh,g \cdot \mm,k)=  g\cdot\Omega(\hh,\mm,k)$
and $P(g \cdot \hh,g \cdot \mm,k)=  gP(\hh,\mm,k)g^{-1}$.
If $g\in G(k)$ normalizes $\hh$ and stabilizes $S_n(\mm)$, then $g\in P(\hh,\mm,k)$.
\item If $\mu\in\Omega(\hh,\mm,k)$, then $\dim C_G(c_\mu(\hh))\ge\dim C_G(\mm)$.
If $\mm$ is $G$-conjugate to $\hh$, then $\Omega(\hh,\mm,k)=\{0\}$ and $P(\hh,\mm,k)= G$.
If $\mm$ is not $G$-conjugate to $\hh$, then $\hh$ is not contained
in the Lie algebra of any R-Levi subgroup of $P(\hh,\mm,k)$.
\end{enumerate}
\end{thm}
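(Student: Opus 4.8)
The plan is to transcribe the proof of Theorem~\ref{thm:optpar} essentially line by line, replacing the conjugation action of $G$ on $G^n$ by the simultaneous adjoint action on $\gg^n$, the objects $H$, $M$, $R_u(P_\lambda)$ by $\hh$, $\mm$, $\Lie(R_u(P_\lambda))$, the phrase ``generic tuple'' by ``generating tuple'', and the inputs Lemma~\ref{lem:generictuple} and Theorem~\ref{thm:orbclosconjcrit} by their Lie-algebra counterparts (the latter being Theorem~\ref{thm:orbclosconjcritLie}). First I would record the handful of elementary facts playing the role of Lemma~\ref{lem:generictuple}. If $\hh'$ is a subalgebra of $\gg$ and $\tuple{y}$ is a tuple generating $\hh'$, then, since each $\Ad(g)$ is a Lie-algebra automorphism of $\gg$, one has $C_G(\hh')=C_G(\tuple{y})$, and $\hh'\subseteq\pp_\lambda$ (resp.\ $\hh'\subseteq\frakl_\lambda$) if and only if every component of $\tuple{y}$ lies there. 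Moreover the map $c_\lambda\colon\pp_\lambda\to\frakl_\lambda$ of Lemma~\ref{lem:liealgebrasofRpars} is a homomorphism of Lie algebras: it is the projection onto the zero weight space of $\lambda$, and the bracket on $\gg$ is compatible with the weight grading; hence if $\hh\subseteq\pp_\mu$ and $\tuple{h}$ generates $\hh$, then $c_\mu(\tuple{h})$ generates $c_\mu(\hh)$.

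With these in hand, parts (i) and (ii) are immediate. Since $\mm=c_\lambda(\hh)$ for some $\lambda\in Y_k(G)$ with $\hh\subseteq\pp_\lambda$, Lemma~\ref{lem:liealgebrasofRpars}(i) gives $\lim_{a\ra 0}\lambda(a)\cdot\tuple{x}=c_\lambda(\tuple{x})\in\mm^n\subseteq S_n(\mm)$ for all $\tuple{x}\in\hh^n$, so $\hh^n$ is uniformly $S_n(\mm)$-unstable over $k$ and Theorem~\ref{thm:kempfrousseau} applies with $X=\hh^n$ and $S=S_n(\mm)$. The equality $P_\mu=P_\nu$ for $\mu,\nu\in\Omega(\hh,\mm,k)$ and the simple transitivity of $R_u(P(\hh,\mm,k))(k)$ on $\Omega(\hh,\mm,k)$ are Theorem~\ref{thm:kempfrousseau}(ii),(iv); the inclusion $\hh\subseteq\Lie(P(\hh,\mm,k))$ holds because Theorem~\ref{thm:kempfrousseau}(i) forces $\lim_{a\ra 0}\mu(a)\cdot\tuple{h}$ to exist for a generating tuple $\tuple{h}\in\hh^n$; the transformation rule under $g\in G'(k)$ comes from Theorem~\ref{thm:kempfrousseau}(iii) together with $g\cdot\hh^n=(g\cdot\hh)^n$ and $g\cdot S_n(\mm)=\ovl{G\cdot(g\cdot\mm)^n}=S_n(g\cdot\mm)$; and if $g\in G(k)$ normalizes $\hh$ (so $g\cdot\hh^n=\hh^n$, while $g\cdot S_n(\mm)=S_n(\mm)$ automatically, $S_n(\mm)$ being $G$-stable), then $g\in N_{G(k)}(\hh^n)\subseteq P(\hh,\mm,k)(k)$ by the last assertion of Theorem~\ref{thm:kempfrousseau}(iv).

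For (iii), any $\tuple{m}\in G\cdot\mm^n$ satisfies $\dim C_G(\tuple{m})\ge\dim C_G(\mm)$ (a $G$-conjugate of an element of $\mm^n$ has centralizer of the same dimension, which is at least $\dim C_G(\mm)$), and since $\tuple{x}\mapsto\dim C_G(\tuple{x})$ is upper semi-continuous on $\gg^n$ (the Lie-algebra version of \cite[Lem.~3.7(c)]{newstead}) this inequality persists for all $\tuple{m}\in S_n(\mm)$. Given $\mu\in\Omega(\hh,\mm,k)$ and a generating tuple $\tuple{h}\in\hh^n$, Theorem~\ref{thm:kempfrousseau}(i) gives $c_\mu(\tuple{h})\in S_n(\mm)$, and $c_\mu(\tuple{h})$ generates $c_\mu(\hh)$, so $\dim C_G(c_\mu(\hh))=\dim C_G(c_\mu(\tuple{h}))\ge\dim C_G(\mm)$. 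If $\mm$ is $G$-conjugate to $\hh$, then $\hh^n\subseteq S_n(\mm)$, whence $\Omega(\hh,\mm,k)=\{0\}$ and $P(\hh,\mm,k)=G$ by definition. If $\mm$ is not $G$-conjugate to $\hh$, then $\dim C_G(\mm)>\dim C_G(\hh)$ by Theorem~\ref{thm:orbclosconjcritLie}(ii) applied to $\lambda$, hence $\dim C_G(c_\mu(\hh))>\dim C_G(\hh)$, so $c_\mu(\hh)$ is not $G$-conjugate to $\hh$, and Theorem~\ref{thm:orbclosconjcritLie}(ii) applied to $\mu$ shows that $\hh$ is not contained in the Lie algebra of any R-Levi subgroup of $P_\mu=P(\hh,\mm,k)$.

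There is no genuine obstacle here: as the text remarks, this is a straightforward modification, and if anything the reduction to connected $G$ is cleaner in the Lie-algebra setting. The only steps requiring care are the verifications in the first paragraph (that $c_\lambda$ is a Lie-algebra homomorphism on $\pp_\lambda$ and that centralizers of Lie subalgebras are controlled by generating tuples) and the semicontinuity input quoted above; once these are checked, the remainder is a faithful translation of the proof of Theorem~\ref{thm:optpar}, and I would leave the routine details to the reader exactly as is done for Theorems~\ref{thm:orbclosconjcritLie} and~\ref{thm:optparLie} in the surrounding discussion.
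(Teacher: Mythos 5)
Your proposal is correct and is exactly the argument the paper intends: the paper omits the proof of Theorem~\ref{thm:optparLie}, leaving it as the "obvious modification" of the proof of Theorem~\ref{thm:optpar}, and your transcription (with the auxiliary checks that $c_\lambda$ is a Lie algebra homomorphism on $\pp_\lambda$, that centralizers and containments are detected by generating tuples, and the semicontinuity input) is precisely that translation. Nothing is missing.
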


\begin{defn}
\label{defn:optparLie}
We call $\Omega(\hh,\mm,k)$ the
\emph{optimal class for $\hh$ with respect to $\mm$ over $k$}
and we call $P(\hh,\mm,k)$ the
\emph{optimal destabilizing R-parabolic subgroup for $\hh$ with respect to $\mm$ over $k$}.
Assume the $G$-conjugacy class given by
Remark \ref{rem:uniqueLiegcr}
contains a subalgebra $\mm$ of the form $c_\lambda(\hh)$
for some $\lambda\in Y_k(G)$.
Then we set $\Omega(\hh,k):= \Omega(\hh,\mm,k)$ and $P(\hh,k):= P(\hh,\mm,k)$.
Under this assumption we have, by
Remark \ref{rem:uniqueLiegcr}
and Theorem~\ref{thm:optparLie}, that $N_{G(k)}(\hh)$ is contained in $P(\hh,k)$
and that for $\mu\in\Omega(\hh,k)$, $c_\mu(\hh)$ is $G$-completely reducible.
So, by Theorem~\ref{thm:orbclosconjcritLie}(ii), if $\hh$ is not $G$-completely reducible,
then $\hh$ is not contained in the Lie algebra of
any R-Levi subgroup of $P(\hh,k)$. Note that, trivially, $P(\hh,k)=G$ if $\hh$
is $G$-completely reducible.
We call $\Omega(\hh,k)$ the
\emph{optimal class for $\hh$ over $k$}
and we call $P(\hh,k)$ the
\emph{optimal destabilizing R-parabolic subgroup for $\hh$ over $k$}.

Note that the assumption of the previous paragraph
is satisfied if $k$ is algebraically closed.
In that case we usually suppress
the $k$ argument and write simply $\Omega(\hh)$ and $P(\hh)$ instead; we refer to these as the
\emph{optimal class for $\hh$}
and the
\emph{optimal destabilizing R-parabolic subgroup for $\hh$},
respectively.
\end{defn}

\begin{exmp}\label{ex:GcrvsLieGcr}
As a further illustration of the power of our construction, we use Theorem~\ref{thm:optparLie} to give a short alternative
proof of \cite[Thm.\ 1(2)]{mcninch}, which states that $\hh=\Lie H$ is
$G$-completely reducible if $H$ is $G$-completely reducible.

Let $H$ be a subgroup of $G$. Assume that $\hh$ is not $G$-cr.
Let $P(\hh)$ be the optimal destabilizing R-parabolic
subgroup for $\hh$. 
By Theorem~\ref{thm:optparLie}(ii), $N_G(\hh) \subseteq P(\hh)$.
Clearly, $H\subseteq N_G(\hh)$.
Moreover, if $\mu\in \Omega(\hh)$ and $H\subseteq L_\mu$, then $\hh\subseteq \frakl_\mu$.  This is impossible by Theorem \ref{thm:optparLie}(iii), so $H$ is not $G$-cr.
Thus we can conclude that if $H$ is $G$-cr, then so is $\hh$.
\end{exmp}

\subsection{A special case of the Centre Conjecture}

In this final section we describe an application of optimal
destabilizing parabolic subgroups to the theory of spherical buildings
\cite{tits1}.
Suppose from now on that $G$ is connected.
Let $X=X(G,k)$ be the spherical Tits building  of
$G$ over $k$; then $X$ is a simplicial complex whose simplices
correspond to the $k$-defined parabolic subgroups of $G$.
The conjugation action of $G(k)$ on itself naturally induces an action of $G(k)$
on $X$.
We identify $X$ with its geometric realization.
A subcomplex $Y$ of $X$ is \emph{convex} if
whenever two points of $Y$ are not opposite in $X$,
then $Y$ contains the unique geodesic joining these points,
and $Y$ is \emph{contractible} if it has the homotopy type of a point.
The following is a version due to Serre of the so-called
Centre Conjecture of J.~Tits \cite[Sec.~2.4]{serre2}.
This has been proved by B.\ M\"uhlherr and J.\ Tits for spherical
buildings of classical type \cite{muhlherrtits}.

\begin{conj}
\label{conj:centre}
Let $Y$ be a convex and contractible subcomplex of $X$.
Then there is a point $y\in Y$ such that $y$ is fixed by
any automorphism of $X$ that stabilizes $Y$.
\end{conj}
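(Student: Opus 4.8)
The plan is to recast Conjecture~\ref{conj:centre} for $Y=X^N$ so that the optimal destabilizing R-parabolic subgroup of Section~\ref{subsec:optnongcr} furnishes the required centre. The first step is to pass from the group $N$ of automorphisms defining $Y$ to the subgroup $\widetilde H:=\bigcap_{[P]\in Y}P$ of $G$, i.e.\ the pointwise stabilizer of $Y$ in $G$. (Here one uses that a parabolic subgroup of $G$ is self-normalizing, so that, at least when $N\subseteq G(k)$, a $k$-defined parabolic subgroup $P$ satisfies $[P]\in Y$ precisely when $N\subseteq P$; the general case is reduced to this by realizing the relevant automorphisms of $X$ as conjugations by elements of a suitable overgroup $G'$ of $G$, via the description of automorphisms of spherical buildings due to Tits.) The subgroup $\widetilde H$ is $k$-closed because $Y$ is $\Gamma$-stable, it satisfies $X^{\widetilde H}=Y$, and it is \emph{saturated}: it equals the intersection of the $k$-defined parabolic subgroups containing it. These three properties are exactly what the remaining steps exploit.

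The second step is the bridge between the topology of $Y$ and complete reducibility: $Y$ is contractible if and only if $\widetilde H$ is not $G$-completely reducible over $k$. Only one direction is needed. Assume $Y$ is contractible; then $Y\neq\varnothing$, so $\widetilde H$ lies in a proper $k$-defined parabolic subgroup of $G$. If $\widetilde H$ were $G$-cr over $k$, then $Y=X^{\widetilde H}$ would, up to homotopy, be a nonempty spherical building (that of a proper reductive subgroup of $G$ determined by $\widetilde H$), hence homotopy equivalent to a non-trivial wedge of spheres by the Solomon--Tits theorem, contradicting contractibility. So $\widetilde H$ is not $G$-cr over $k$. Working over a perfect field $k$ (in particular over $\ovl k$, where the machinery is cleanest), I would then take the centre to be the barycentre $y_0$ of the simplex of $X$ corresponding to the optimal destabilizing R-parabolic subgroup $P(\widetilde H,k)$ of Definition~\ref{defn:optpar}: this is well defined and $k$-defined by Theorem~\ref{thm:potrat}(ii), and it is a \emph{proper} parabolic subgroup of $G$, since $\widetilde H$ is not $G$-cr over $k$ and hence not $G$-cr (remark following Lemma~\ref{lem:Gcroverk}). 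As $\widetilde H\subseteq P(\widetilde H,k)$, the point $y_0$ lies in $Y$.

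The third step is to show $y_0$ is fixed by every automorphism $\phi$ of $X$ that stabilizes $Y$. Using the building-rigidity input again, I reduce to $\phi$ induced by conjugation by some $g\in G'(k)$. The key point is that such a $g$ must normalize $\widetilde H$: indeed $\phi(Y)=Y$ gives $X^{g\widetilde H g^{-1}}=X^{\widetilde H}$, and since conjugation by $g$ permutes the $k$-defined parabolic subgroups of $G$, the group $g\widetilde H g^{-1}$ is again saturated; but two saturated subgroups with the same family of containing $k$-defined parabolic subgroups must coincide, so $g\widetilde H g^{-1}=\widetilde H$. Now the equivariance statement in Theorem~\ref{thm:optpar}(ii), applied with the overgroup $G'$, gives $g\,P(\widetilde H,k)\,g^{-1}=P(g\widetilde H g^{-1},k)=P(\widetilde H,k)$, so $g$ normalizes $P(\widetilde H,k)$ and therefore $\phi$ fixes the simplex $y_0$. (That $N$ itself fixes $y_0$ is the special case $g\in N\subseteq N_{G'(k)}(\widetilde H)$.)

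I expect the main obstacle to be the bridge lemma of the second step, and within it the identification of the homotopy type of $X^{\widetilde H}$ when $\widetilde H$ is $G$-cr over $k$: this rests on the structure theory of $G$-complete reducibility and its interaction with the building, and it is here, together with the passage from an abstract automorphism of $X$ to a conjugation by an element of $G'(k)$, that hypotheses on $G$ and on $k$ genuinely enter. For non-perfect $k$ there is a further difficulty, already flagged in Example~\ref{exmp:obstacle}: $\widetilde H$ could fail to be $G$-cr over $k$ while still being $G$-cr over $\ovl k$, in which case $P(\widetilde H)=G$ and the argument above must instead be run with an optimal destabilizing parabolic $P(\widetilde H,M,k)$ for a suitably chosen $k$-defined retraction $M$; this is why one obtains only this special case of Conjecture~\ref{conj:centre} rather than the full statement.
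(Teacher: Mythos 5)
There is a genuine gap here, and it starts with what the statement actually asserts. Conjecture~\ref{conj:centre} concerns an \emph{arbitrary} convex contractible subcomplex $Y$ of $X$ and \emph{all} automorphisms of $X$ stabilizing $Y$; the paper does not prove it (it records the conjecture, attributing the classical-type case to M\"uhlherr--Tits), and only establishes special cases: Theorem~\ref{thm:X^N} for $Y=X^H$ with $G$ semisimple adjoint, $k$ perfect, and automorphisms coming from $(\Aut G)(k)$, plus Theorem~\ref{thm:fieldauts} for automorphisms induced by $\Gamma=\Gal(k_s/k)$. Your proposal restricts from the outset to $Y=X^N$ and, as you concede in your last paragraph, yields only a special case; so even if every step worked it would not prove the statement under review. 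Within that special case your steps one and two essentially retrace the paper's proof of Theorem~\ref{thm:X^N}: pass to the ``saturated'' subgroup $K=\bigcap_{[P]\in Y}P$ with $X^K=Y$, use Serre's result that contractibility of $X^K$ forces $K$ not to be $G$-cr over $k$ (the paper cites Serre here; your Solomon--Tits sketch of that equivalence is not an argument, since you would need to identify the homotopy type of $X^{K}$ with a building of a reductive subgroup, which is precisely the nontrivial content), deduce via perfectness of $k$ that $K$ is not $G$-cr, and take the barycentre of the simplex of the optimal destabilizing parabolic $P(K)$, which is $k$-defined and proper and contains $K$ by Theorems~\ref{thm:optpar} and~\ref{thm:potrat}. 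You also gloss over the hypotheses ($G$ semisimple and adjoint) needed so that $\Aut G$ is an algebraic group containing $G$ as a normal subgroup, which is what legitimizes applying Theorem~\ref{thm:optpar}(ii) with $G'=\Aut G$.

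The step that actually fails is your third one: the claim that ``building rigidity'' lets you reduce every automorphism $\phi$ of $X$ stabilizing $Y$ to conjugation by some $g\in G'(k)$. By Tits' description, $\Aut X$ is (for most $G$) generated by $\Aut G$ together with automorphisms induced by field automorphisms, and the latter are semilinear: they are not realized by conjugation inside any linear algebraic overgroup $G'$ of $G$, so Theorem~\ref{thm:optpar}(ii) does not apply to them. This is exactly why the paper needs the separate argument of Theorem~\ref{thm:fieldauts}, which handles $\gamma\in\Gamma_Y$ not by equivariance under $G'(k)$ but by the $k$-definedness ($\Gamma$-invariance) of the norm together with Proposition~\ref{prop:uniquegcr}(iii), showing directly that the optimality function is $\Gamma_Y$-invariant and hence that $P(K)$ is $\Gamma_Y$-stable. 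Even combining both theorems there remain exceptional groups for which $\Aut X$ is strictly larger than the subgroup generated by $\Aut G$ and field automorphisms (cf.\ the remark after Theorem~\ref{thm:fieldauts}), so the conjecture is not obtained even for $Y=X^H$. In short: your centre is the right one (the barycentre of the simplex of $P(K)$), but your reduction of arbitrary building automorphisms to elements of $G'(k)$ is false as stated, and without the separate $\Gamma$-invariance argument the fixedness of $y$ under the automorphisms the conjecture demands is not established.
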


A point $y \in Y$ whose existence is asserted in Conjecture \ref{conj:centre}
is frequently referred to as a ``natural centre'' or just ``centre'' of $Y$.
Our idea is to take as a centre of $Y$ the barycentre
of the simplex corresponding to the optimal destabilizing parabolic
subgroup in an appropriate sense.
This approach is not new; indeed, it was part of the motivation for Kempf's
paper \cite{kempf} on optimality (cf.\ \cite[p.\ 64]{mumford}).
We show how to make
this work to prove the
Centre Conjecture in the case that $Y$ is the
\emph{fixed point subcomplex $X^H$} for some subgroup $H$ of $G$,
where $X^H$ consists of all the simplices in $X$ corresponding to
parabolic subgroups containing $H$.
Note that $X^H$ is always a convex subcomplex of $X$ \cite[\S2.3.1]{serre1.5}.

\begin{thm}
\label{thm:X^N}
Suppose $G$ is semisimple and adjoint and $k$ is a perfect field.
%Let $X= X(G,k)$.
Let $H$ be a subgroup of $G$ and suppose
that $Y:=X^H$ is contractible.  Then there is a point $y\in Y$
which is fixed by any element of $(\Aut G)(k)$ that stabilizes $Y$.
\end{thm}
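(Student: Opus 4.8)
The plan is to exhibit the centre of $Y$ as the barycentre of the simplex of $X$ attached to a suitably chosen optimal destabilizing parabolic subgroup of $G$. The one delicate point --- reflected in the phrase ``in an appropriate sense'' above --- is that the given $H$ need not be preserved by the automorphisms whose fixed points we want, so the first move is to replace $H$ by a canonically associated subgroup. Set $\Sigma := \{\varphi \in (\Aut G)(k) \mid \varphi(Y) = Y\}$; this is a subgroup of $(\Aut G)(k)$, each of whose elements acts simplicially on $X = X(G,k)$ by permuting the $k$-defined parabolic subgroups of $G$, and we must produce a point of $Y$ fixed by all of $\Sigma$. For $\varphi \in \Sigma$ we have $\varphi(Y) = \varphi(X^H) = X^{\varphi(H)}$, whence $X^{\varphi(H)} = Y$ for every $\varphi \in \Sigma$. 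First I would let $K$ be the $k$-closure of the algebraic subgroup of $G$ generated by $\{\varphi(H) \mid \varphi \in \Sigma\}$; since $k$ is perfect, $K$ is a $k$-closed, hence $k$-defined, subgroup of $G$, and because $\Sigma$ is a group, $\psi(K) = K$ for every $\psi \in \Sigma$. Moreover a $k$-defined parabolic subgroup $P$ of $G$ contains $H$ if and only if it contains $\varphi(H)$ for all $\varphi \in \Sigma$ (apply $\psi^{-1} \in \Sigma$), if and only if it contains $K$ (using that $P$ is $k$-closed); hence $X^K = X^H = Y$. Fix $n$ large enough that $K^n$ contains a generic tuple of $K$.

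Next I would run the constructions of \S\ref{subsec:optnongcr} with $G' := \Aut G$ --- a $k$-defined linear algebraic group containing $G$ as a normal $k$-subgroup, since $G$ is semisimple and adjoint --- and a fixed $k$-defined $\Aut G$-invariant norm on $Y(G)$ (Definition \ref{def:norm}). Since $Y = X^K$ is contractible, and in particular non-empty, $K$ is not $G$-completely reducible over $k$: a subgroup that is $G$-cr over $k$ is either $G$-irreducible over $k$, so that $X^K = X$ is homotopy equivalent to a non-trivial wedge of spheres, or is contained in a proper $k$-defined R-Levi subgroup $L$, in which case $X^K$ retracts onto the non-contractible subcomplex $X^L$; cf.\ \cite{serre1.5}, \cite{serre2}. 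As $k$ is perfect and $K$ is $k$-closed, Theorem \ref{thm:potrat}(ii) shows that $P_0 := P(K,k)$ is well defined and $k$-defined, and by Theorem \ref{thm:optpar}(i) and Definition \ref{defn:optpar} we have $K \subseteq P_0$ and $P_0 \subsetneq G$ (the latter because $K$ is not $G$-cr over $k$). Thus $P_0$ is a proper $k$-defined parabolic subgroup, corresponding to a genuine simplex $\sigma$ of $X$ with $\sigma \in X^K = Y$.

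It then remains to show that $\Sigma$ fixes $\sigma$, and hence its barycentre $y \in \sigma \subseteq Y$, which is the desired centre. Let $\psi \in \Sigma$. By Proposition \ref{prop:uniquegcr} there is a canonically attached $G$-conjugacy class of $G$-completely reducible subgroups of $G$, represented by some $M$ with $M = c_\lambda(K)$ for a suitable $\lambda$; since $\psi(K) = K$, part (ii) of that proposition shows $\psi$ stabilizes this class, so $\psi(M)$ is $G$-conjugate to $M$, and therefore $S_n(\psi(M)) = S_n(M)$. Applying the equivariance statement of Theorem \ref{thm:optpar}(ii) with $g = \psi \in G'(k)$, together with $\psi(K) = K$, gives $\psi(P_0) = P(\psi(K),\psi(M),k) = P(K,\psi(M),k) = P(K,M,k) = P_0$. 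Hence $\psi$ maps $\sigma$ to itself; since $\psi$ acts simplicially on $X$ and the barycentre of a simplex is fixed by every permutation of its vertices, $\psi(y) = y$, as required.

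The hard part will be the construction in the first paragraph together with the input used in the second: one must check that the canonical enlargement $K$ simultaneously keeps the fixed-point subcomplex equal to $Y$, is $k$-closed (so that the perfect-field rationality result Theorem \ref{thm:potrat}(ii) applies and $P(K,k)$ is $k$-defined), and is genuinely not $G$-completely reducible over $k$ (so that $P(K,k)$ is a proper parabolic and $\sigma$ is an honest simplex of $X$) --- this last point resting on the standard dictionary between contractibility of $X^K$ and failure of complete reducibility. Once that framework is in place, the uniqueness and $\Aut G$-equivariance properties of the optimal destabilizing parabolic from \S\S\ref{subsec:serre}--\ref{subsec:optnongcr} finish the proof with essentially no further work.
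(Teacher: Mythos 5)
Your strategy is the same as the paper's: replace $H$ by a canonical $k$-defined subgroup $K$ with $X^K=Y$ which is normalized by every relevant automorphism, use Serre's contractibility criterion, and take the barycentre of the simplex of the optimal destabilizing parabolic, whose $k$-rationality and $(\Aut G)(k)$-equivariance come from Theorems \ref{thm:potrat}(ii) and \ref{thm:optpar}(ii) with $G'=\Aut G$. However, your construction of $K$ has a flaw: you take ``the $k$-closure of the algebraic subgroup generated by $\{\varphi(H)\mid\varphi\in\Sigma\}$'' and assert that it is a $k$-defined \emph{subgroup}. The $k$-closure of a subgroup need not be a subgroup: for perfect $k$ the $k$-closure of a closed subgroup $H_0$ is the Zariski closure of $\bigcup_{\gamma\in\Gamma}\gamma(H_0)$, and a union of Galois translates of a subgroup is in general not a group (already for two distinct Galois-conjugate subgroups of order $3$ in $\GL_2(\ovl{\mathbb{Q}})$, e.g.\ the group generated by $\twobytwo{\zeta_3}{1}{0}{1}$ and its conjugate under $\zeta_3\mapsto\zeta_3^2$). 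Since Proposition \ref{prop:uniquegcr} and Theorems \ref{thm:optpar}, \ref{thm:potrat} are statements about subgroups (generic tuples, $c_\lambda(K)$, etc.), the optimality machinery cannot be applied to your $K$ as written. The repair is easy: take the algebraic subgroup generated by all $\Gamma$- and $\Sigma$-translates of $H$ (closed and $\Gamma$-stable, hence $k$-defined as $k$ is perfect, and still satisfying $X^K=Y$ because $k$-defined parabolics are $\Gamma$-stable), or, as the paper does, let $K$ be the intersection of all $k$-defined parabolic subgroups of $G$ containing $H$, which is automatically a $k$-defined subgroup normalized by every element of $(\Aut G)(k)$ stabilizing $Y$.

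The second, more substantive gap is your claim that $P_0=P(K,k)\subsetneq G$ ``because $K$ is not $G$-cr over $k$''. That is not what the construction delivers: by Theorem \ref{thm:optpar}(iii) and Definition \ref{defn:optpar}, $P(K,k)$ is proper if and only if $K$ fails to be $G$-completely reducible over $\ovl{k}$ (equivalently, the attached $G$-cr class is not the class of $K$); failure of complete reducibility over $k$ alone is not enough. Indeed, Remark \ref{rem:obstacle} and Example \ref{exmp:obstacle} show that over non-perfect fields $K$ can be $G$-cr over $\ovl{k}$ yet not over $k$, in which case $P(K,c_\mu(K),k)=G$ and the whole barycentre argument collapses --- this is precisely why the theorem assumes $k$ perfect. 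So you must insert the bridge the paper uses: since $k$ is perfect, $G$-complete reducibility over $k$ and over $\ovl{k}$ coincide (\cite[Thm.~5.8]{BMR}), so Serre's criterion (contractibility of $Y=X^K$ gives that $K$ is not $G$-cr over $k$) yields that $K$ is not $G$-cr, whence $P(K,k)=P(K)$ is proper. (Your parenthetical sketch of Serre's criterion is also garbled --- if $K$ is $G$-irreducible over $k$ then $X^K=\varnothing$, not $X$ --- but since you, like the paper, ultimately cite Serre for this step, that is minor.) With these two repairs your argument coincides with the paper's proof.
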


\begin{proof}
Since we are assuming that $G$ is semisimple and defined over $k$, $\Aut G$
is an algebraic group also defined over $k$ \cite[5.7.2]{tits1}.
Since $G$ is adjoint, we can also view $G$ as a subgroup of $\Aut G$.
Let $K$ be
the intersection of all the $k$-defined parabolic subgroups of $G$
that contain $H$.
Then $K$ is $k$-defined, because $k$ is perfect, and
$X^{K}= Y$, cf.\ the proof of \cite[Thm.\ 3.1]{BMR:tits}.
Since $Y$ is contractible, $K$ is not $G$-cr over $k$, by a result of
Serre \cite[Sec.~3]{serre2}, and hence $K$ is not $G$-cr, by \cite[Thm.\ 5.8]{BMR}.

Now let $M$ be a representative of the unique $G$-conjugacy class of $G$-cr subgroups attached to
$K$ given by Proposition \ref{prop:uniquegcr}.
Let $P=P(K)$ be the optimal destabilizing parabolic subgroup for $K$ (over $\overline{k}$),
Definition~\ref{defn:optpar}.
Then $P$ is a parabolic subgroup of $G$ containing $K$,
by Theorem \ref{thm:optpar}(i), and $P$ is defined over $k$,
by Theorem \ref{thm:potrat}(ii),
so $P$ corresponds to a simplex of $Y$.
Moreover, any element of $(\Aut G)(k)$ that stabilizes $Y$
also normalizes $K$, and hence stabilizes the $G$-conjugacy class of $M$,
by Proposition \ref{prop:uniquegcr}(ii).
So any such automorphism normalizes $P$, by
Theorem~\ref{thm:optpar}(ii), with $G' = \Aut G$.
We can therefore take $y$ to be
the barycentre of the simplex corresponding to $P$.
\end{proof}

\begin{rem}
The assumptions that $G$ is semisimple and adjoint in Theorem \ref{thm:X^N} allow us
to apply our optimality results, because they ensure that $\Aut G$ is an algebraic
group and $G$ is a subgroup of $\Aut G$.
In the context of buildings, however, these assumptions are no loss: given any connected reductive $G$,
let $\Ad$ denote the adjoint representation of $G$.
Then the building of $G$ is isomorphic to the building of the
adjoint group $\Ad(G)$, and a subgroup $H$ of $G$ is $G$-cr if and only if the image of $H$
in $\Ad(G)$ is $\Ad(G)$-cr \cite[Lem.~2.12]{BMR}.
Moreover, all automorphisms of $X(G)$ that come from $\Aut G$ survive this transition
from $G$ to $\Ad(G)$.
\end{rem}

To establish that the Centre Conjecture holds for
subcomplexes of the form $Y=X^H$, we need to find a
centre $y\in Y$ which is fixed by all building automorphisms of
$X$ that stabilize $X^H$, not just the building automorphisms
that arise from algebraic automorphisms of $G$.
For most $G$, however, $\Aut X$ is generated by $\Aut G$ together
with field automorphisms:
see \cite[Cor.\ 5.9]{tits1} for more details.
We finish by showing how to deal with field automorphisms in some cases.

Recall that $\Gamma$ denotes the group $\Gal(k_s/k)$.
Following \cite[5.7.1]{tits1}, any $\gamma \in \Gamma$ induces
an automorphism of the building $X = X(G,\overline{k})$,
which we also denote by $\gamma$.
Recall that $\Gamma$ also acts on the set of cocharacters $Y(G)$
and we can ensure that the norm is invariant under this action
(i.e., the norm is $k$-defined
in the sense of Section~\ref{sec:uniform}).

\begin{thm}\label{thm:fieldauts}
Suppose $G$ is connected.
Let $X = X(G,\overline{k})$ be the building of $G$ over the algebraic closure of $k$.
Let $H$ be a subgroup of $G$ and suppose
that $Y:=X^H$ is contractible.
Let $\Gamma_Y$ denote the subgroup of $\Gamma$ that stabilizes $Y$.
Then there is a point $y\in Y$ which is fixed by any element of $\Gamma_Y$.
\end{thm}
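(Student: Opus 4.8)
The plan is to carry over the proof of Theorem~\ref{thm:X^N}, replacing the algebraic group $\Aut G$ by the (non-algebraic) action of $\Gamma$ on $X$, and to exploit the $\Gamma$-equivariance of the optimal destabilizing parabolic construction of Section~\ref{subsec:optnongcr}. First I would pass to a canonical subgroup determined by $Y$: let $K$ be the intersection of all parabolic subgroups of $G$ (over $\ovl k$, i.e.\ all simplices of $X$) that contain $H$. Exactly as in the proof of Theorem~\ref{thm:X^N} (cf.\ the proof of \cite[Thm.~3.1]{BMR:tits}) one has $X^K = X^H = Y$. The advantage of $K$ over $H$ is that $K$ depends only on $Y$: if $\gamma\in\Gamma_Y$, then $\gamma\cdot Y = Y$ forces $X^{\gamma\cdot H} = \gamma\cdot X^H = X^H$, so $\gamma$ permutes the set of parabolic subgroups containing $H$ amongst themselves, whence $\gamma\cdot K = \bigcap_{P\supseteq H}\gamma\cdot P = \bigcap_{P\supseteq H}P = K$. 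Thus $K$ is $\Gamma_Y$-stable. Since $Y$ is contractible, $K$ is not $G$-completely reducible, by the result of Serre \cite[Sec.~3]{serre2} used in the proof of Theorem~\ref{thm:X^N}.

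Next, fix a $k$-defined $G$-invariant norm $\left\|\,\right\|$ on $Y(G)$ (Definition~\ref{def:norm}, with $G' = G$), fix $n\in{\mathbb N}$ so that $K^n$ contains a generic tuple of $K$, and let $M$ be a representative of the $G$-conjugacy class of $G$-completely reducible subgroups attached to $K$ by Proposition~\ref{prop:uniquegcr}; working over $\ovl k$ we may take $M = c_\lambda(K)$ with $\lambda\in Y(G)$ and $K\subseteq P_\lambda$. Then $K^n$ is uniformly $S_n(M)$-unstable over $\ovl k$ (Definition~\ref{def:SnM}), so by Theorem~\ref{thm:optpar} the optimal class $\Omega(K) := \Omega(K^n,S_n(M))$ is non-empty and the optimal destabilizing parabolic subgroup $P(K)$ of $G$ is defined; it contains $K$ (hence $H$), and since $K$ is not $G$-cr it is a proper parabolic subgroup of $G$ by Theorem~\ref{thm:optpar}(iii), so it corresponds to a genuine simplex of $Y$.

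The heart of the argument is to show $\gamma\cdot P(K) = P(K)$ for every $\gamma\in\Gamma_Y$. Here I would use the $\Gamma$-equivariance of the optimal class over $\ovl k$: because $a_{S,x}(\mu)$ is the length of a scheme-theoretic fibre and $\gamma$ acts on $\ovl k$-subschemes of, and on morphisms $\ovl k\to$, the $k$-variety $G^n$ preserving such lengths, one gets $a_{\gamma\cdot S,\gamma\cdot X}(\gamma\cdot\mu) = a_{S,X}(\mu)$, while $\left\|\gamma\cdot\mu\right\| = \left\|\mu\right\|$ since $\left\|\,\right\|$ is $k$-defined; hence $\Omega(\gamma\cdot X,\gamma\cdot S) = \gamma\cdot\Omega(X,S)$ (cf.\ the proof of Theorem~\ref{thm:kr-rationality}). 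Apply this with $X = K^n$, $S = S_n(M)$. Since $\gamma\cdot K = K$, the automorphism $\gamma$ stabilizes the $G$-conjugacy class of $K$, so it stabilizes that of $M$ by Proposition~\ref{prop:uniquegcr}(iii); as $S_n(M)$ depends only on this $G$-conjugacy class (Definition~\ref{def:SnM}) and $\gamma$ commutes with the closure and $G$-action, $\gamma\cdot S_n(M) = S_n(\gamma\cdot M) = S_n(M)$. Therefore $\gamma\cdot\Omega(K) = \Omega\big((\gamma\cdot K)^n,\gamma\cdot S_n(M)\big) = \Omega(K^n,S_n(M)) = \Omega(K)$. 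Picking any $\mu\in\Omega(K)$ we get $\gamma\cdot\mu\in\Omega(K)$, so $P_{\gamma\cdot\mu} = P(K)$ by Theorem~\ref{thm:optpar}(i); but $P_{\gamma\cdot\mu} = \gamma\cdot P_\mu = \gamma\cdot P(K)$ by Lemma~\ref{lem:Galoislim}, whence $\gamma\cdot P(K) = P(K)$.

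Finally, I would take $y$ to be the barycentre of the simplex of $X$ corresponding to $P(K)$; it lies in $Y = X^K$ because $P(K)\supseteq K$, and each $\gamma\in\Gamma_Y$ fixes $P(K)$, hence stabilizes this simplex, permutes its vertices, and therefore fixes $y$. The main obstacle I anticipate is making the $\Gamma$-equivariance of $\Omega$ over $\ovl k$ fully rigorous when $k$ is not perfect: one must check carefully that $\Gamma = \Gal(\ovl k/k)$ acts compatibly on $G^n$, on the relevant (not necessarily $k_s$-defined) morphisms $\ovl k\to G^n$ and their scheme-theoretic fibres, and on $S_n(M)$, so that the function $a_{S,x}$ is genuinely $\Gamma$-equivariant; the remaining ingredients are already available from Sections~\ref{sec:uniform} and~\ref{subsec:optnongcr}.
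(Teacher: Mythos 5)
Your proposal is correct and follows essentially the same route as the paper's proof: take $K$ to be the intersection of the parabolic subgroups containing $H$ (so $Y=X^K$ and $\gamma\cdot K=K$ for $\gamma\in\Gamma_Y$), note $S_n(M)$ is $\Gamma_Y$-stable by Proposition~\ref{prop:uniquegcr}(iii), use the $\Gamma$-invariance of the $k$-defined norm together with the $\Gamma$-equivariance of $\mu\mapsto a_{S_n(M),K^n}(\mu)$ to conclude that $\Omega(K)$, hence $P(K)$, is $\Gamma_Y$-stable, and take $y$ to be the barycentre of the corresponding simplex. The extra details you supply (properness of $P(K)$ via Serre's contractibility criterion, and the equivariance of $a_{S,x}$, for which the framework in the proof of Theorem~\ref{thm:kr-rationality} suffices) are consistent with, and only slightly more explicit than, the paper's argument.
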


\begin{proof}
As in the proof of Theorem \ref{thm:X^N}, let $K$ be the intersection
of the parabolic subgroups corresponding to simplices in $Y$.
Then $Y=X^K$, and since $Y$ is stabilized by all $\gamma \in \Gamma_Y$,
we have $\gamma\cdot K = K$ for all $\gamma \in \Gamma_Y$.
Let $\lambda$ and $M = c_\lambda(K)$ be as in Proposition \ref{prop:uniquegcr}.  Choose $n\in {\mathbb N}$ such that $K$ admits a generic $n$-tuple ${\mathbf k}\in K^n$.
Then $S_n(M)$ is $\Gamma_Y$-stable by Proposition \ref{prop:uniquegcr}(iii).
Because the norm is $\Gamma$-invariant, for any $\lambda\in \Lambda(K^n)$ and any $\gamma\in \Gamma_Y$, we have
$$ \frac{\alpha_{S_n(M),K^n}(\gamma\cdot \lambda)}{\left\| \gamma\cdot \lambda \right\|}= \frac{\alpha_{S_n(M),K^n}(\lambda)}{\left\| \lambda \right\|}. $$
It follows that the optimal parabolic subgroup $P(K)$ for $K$ is stabilized by $\Gamma_Y$.  We can therefore take $y$ to be the barycentre of the simplex corresponding to $P(K)$.
\end{proof}

\begin{rems}
(i). Combining Theorem \ref{thm:X^N} and Theorem \ref{thm:fieldauts} goes a long way towards
proving the full version of Tits' Centre Conjecture for subcomplexes of the form $X^H$ in many cases.
For example, if $G$ is a split simple group of adjoint type defined over a finite field $k$,
then,
with a few exceptions, the automorphism group of $X(G,\overline{k})$ is a split extension of $\Aut G$ by
the automorphism group of the field $\overline{k}$ (see \cite[Cor.\ 5.10]{tits1}),
and the results above show how to deal with many of these automorphisms.

(ii). Theorems \ref{thm:X^N} and \ref{thm:fieldauts} improve on \cite[Thm.\ 3.1]{BMR:tits}.
\end{rems}

%%%%%%%%%%%%%%%%%%%%%%%%%%%%%%%%%%%%%%%%%%%%%%%%%%%%%%%%%%%%%%%%%%%%%%
%%%%%%%%%%%%% Acknowledgments
%%%%%%%%%%%%%%%%%%%%%%%%%%%%%%%%%%%%%%%%%%%%%%%%%%%%%%%%%%%%%%%%%%%%%%

\bigskip
{\bf Acknowledgements}:
The authors acknowledge the financial support of EPSRC Grant EP/C542150/1,
Marsden Grant UOC0501 and
the DFG-priority programme SPP1388 ``Representation Theory''.
Part of the research for this paper was carried out while the
authors were staying at the Mathematical Research Institute
Oberwolfach supported by the ``Research in Pairs'' programme.
Also, part of this paper was written during a stay of the three
first authors at the Isaac Newton Institute for Mathematical
Sciences, Cambridge during the ``Algebraic Lie Theory'' Programme in 2009.
Finally, we are grateful to the referee for carefully reading the manuscript and for some suggestions.

\bigskip


\begin{thebibliography}{00}
\bibitem{BMR}
M.~Bate, B.~Martin, G.~R\"ohrle,
\emph{A geometric approach to complete reducibility},
Invent.\ Math. \textbf{161}, no. 1 (2005), 177--218.

\bibitem{BMR2}
\bysame, %M.~Bate, B.~Martin, G.~R\"ohrle,
\emph{Complete reducibility and commuting subgroups},
J. Reine Angew.\ Math. \textbf{621}, (2008), 213--235.

\bibitem{BMR:tits}
\bysame, %M.~Bate, B.~Martin, G.~R\"ohrle,
\emph{On Tits' Centre Conjecture for fixed point subcomplexes},
C. R. Acad.\ Sci.\ Paris Ser.\ I Math.
\textbf{347}, (2009) 353--356.

\bibitem{BMRT}
M.~Bate, B.~Martin, G.~R\"ohrle, R.~Tange,
\emph{Complete reducibility and separability},
Trans.\ Amer.\ Math.\ Soc.,  \textbf{362}, no.~8, (2010), 4283--4311.


\bibitem{BMRT:relative}
\bysame, %M.~Bate, B.~Martin, G.~R\"ohrle, R.~Tange,
\emph{Complete reducibility and conjugacy classes of tuples
in algebraic groups and Lie algebras},
Math. Z.,
DOI: 10.1007/s00209-010-0763-9.
%{\tt arXiv:0905.0065v2 [math.GR]}.

\bibitem{Bo1}
A.\ Borel,
\emph{Properties and linear representations of Chevalley groups}, %1970
Seminar on Algebraic Groups and Related Finite Groups, pp. 1-55,
Lecture Notes in Mathematics, Vol. 131, Springer, Berlin, 1970.

\bibitem{Bo}
\bysame, %A.~Borel,
\emph{Linear algebraic groups},
Graduate Texts in Mathematics, \textbf{126}, Springer-Verlag 1991.

\bibitem{boreltits}
A.~Borel, J.~Tits,
\emph{\'El\'ements unipotents et sous-groupes paraboliques
des groupes r\'eductifs, I,} Invent. Math. \textbf{12} (1971), 95--104.

\bibitem{BoTi}
\bysame, %A.\ Borel, J.\ Tits,
\emph{Compl\'ements \`a l'article: ``Groupes r\'eductifs''},
Inst.\ Hautes \'Etudes Sci.\ Publ.\ Math.\ No.\ \textbf{41} (1972), 253--276.

\bibitem{curtisreiner}
C.W.~Curtis, I.~Reiner,
\emph{Representation theory of finite groups and associative algebras}.
Pure and Applied Mathematics, Vol.\ XI Interscience,
John Wiley \& Sons, New York-London 1962.

\bibitem{Gomez}
T.L.~G\'omez,  A.~Langer, A.H.W.~Schmitt, I.~Sols,
\emph{Moduli spaces for principal bundles in arbitrary characteristic}.
Adv.\ Math.\  \textbf{219}, no.\ 4   (2008), 1177--1245.

\bibitem{He}
W.H.~Hesselink,
\emph{Uniform instability in reductive groups},
J. Reine Angew.\ Math.\ \textbf{303/304} (1978), 74--96.

\bibitem{He2}
\bysame, % W.H.~Hesselink,
\emph{Desingularizations of varieties of nullforms},
Invent.\ Math.\ \textbf{55} (1979), no.\ 2, 141--163.

\bibitem{jantzen}
J.C.~Jantzen,
\emph{Nilpotent orbits in representation theory}, in: \emph{Lie theory: Lie algebras and representations}, Progress in Math.\ {\bf 228}, Birkha\"user Boston (2004).

\bibitem{kempf}
G.R.~Kempf,
\emph{Instability in invariant theory},
Ann.\  Math.\ \textbf{108} (1978), 299--316.

\bibitem{levy}
J.~Levy,
\emph{Rationality and Orbit Closures},
Canad. Math. Bull. \textbf{46}, (2003), 204--215.

\bibitem{liebeckseitz0}
M.W.~Liebeck, G.M.~Seitz,
\emph{Reductive subgroups of exceptional algebraic groups}.
Mem.\ Amer.\ Math.\ Soc.\ no.\ \textbf{580} (1996).

\bibitem{liebeckseitz}
\bysame, %M.W.~Liebeck, G.M.~Seitz,
\emph{Variations on a theme of Steinberg},
Special issue celebrating the 80th birthday of Robert Steinberg.
J. Algebra \textbf{260} (2003), no.\ 1, 261--297.

\bibitem{liebecktesterman}
M.W.~Liebeck, D.M.~Testerman,
\emph{Irreducible subgroups of algebraic groups},
Q. J. Math.\ \textbf{55} (2004), 47--55.

\bibitem{martin1}
B.~Martin,
\emph{Reductive subgroups of reductive groups in nonzero characteristic},
J. Algebra \textbf{262} (2003), no.\ 2, 265--286.

\bibitem{martin2}
\bysame, %B.~Martin,
\emph{A normal subgroup of a strongly reductive subgroup is strongly
reductive},
J. Algebra  \textbf{265} (2003), no.\ 2, 669--674.

\bibitem{mcninch}
G.~McNinch,
\emph{Completely reducible Lie subalgebras},
Transform.\ Groups  \textbf{12}  (2007),  no.\ 1, 127--135.

\bibitem{muhlherrtits}
B.\  M\"uhlherr, J.\  Tits,
\emph{The Centre Conjecture for non-exceptional buildings},
J. Algebra \textbf{300} (2006), no.\ 2, 687--706.

\bibitem{mumford}
D.~Mumford, J.~Fogarty, F.~Kirwan,
\emph{Geometric invariant theory}. Third edition.
Ergebnisse der Mathematik und ihrer Grenzgebiete, 34.
Springer-Verlag, Berlin, 1994.

\bibitem{newstead}
P.~E.~Newstead,
\emph{Introduction to moduli problems and orbit spaces}, published for the Tata
Institute of Fundamental Research, Bombay, by Springer-Verlag, 1978.

\bibitem{premet}
A.~Premet,
\emph{Nilpotent orbits in good characteristic and the Kempf-Rousseau theory}, J. Algebra {\bf 260} (2003), no.\ 1, 338--366.

\bibitem{rich1}
R.W.~Richardson,
\emph{On orbits of algebraic groups and Lie groups},
Bull.\  Austral.\  Math.\  Soc.\  \textbf{25} (1982), no.\  1, 1--28.

\bibitem{rich}
\bysame, %R.W.~Richardson,
\emph{Conjugacy classes of $n$-tuples in Lie algebras and algebraic groups},
Duke Math. J. \textbf{57} (1988), no.\ 1, 1--35.

\bibitem{rousseau}
G.~Rousseau,
\emph{Immeubles sph\'eriques et th\'eorie des invariants},
C.R.A.S. \textbf{286} (1978), 247--250.

\bibitem{schmitt}
A.H.W.~Schmitt,
\emph{A closer look at semistability for singular principal bundles}.
Int.\ Math.\ Res.\ Not.\ \textbf{62}  (2004),  3327--3366.

\bibitem{seitz1}
G.M.~Seitz,
\emph{Abstract homomorphisms of algebraic groups},
J. London Math.\ Soc.\ (2) \textbf{56} (1997), no.\ 1, 104--124.

\bibitem{serre0}
J.-P. Serre,
\emph{Semisimplicity and tensor products of group representations:
converse theorems}. With an appendix by Walter Feit,
J. Algebra \textbf{194} (1997), no.\ 2, 496--520.

\bibitem{serre1.5}
\bysame, %J.-P. Serre,
\emph{La notion de compl\`ete r\'eductibilit\'e
dans les immeubles sph\'eriques et les groupes r\'eductifs},
S\'eminaire au Coll\`ege de France, r\'esum\'e dans
\cite[pp.\ 93--98]{tits2}, (1997).

\bibitem{serre1}
\bysame, %J.-P. Serre,
\emph{The notion of complete reducibility in group theory},
Moursund Lectures, Part II, University of Oregon, 1998,\
{\tt arXiv:math/0305257v1 [math.GR]}.

\bibitem{serre2}
\bysame, %J-P.~Serre,
\emph{Compl\`ete r\'eductibilit\'e},
S\'eminaire Bourbaki, 56\`eme ann\'ee, 2003--2004, n$^{\rm o}$ 932.

\bibitem{spr2}
T.A. ~Springer, \emph{Linear algebraic groups},
Second edition. Progress in Mathematics, 9. Birkh\"auser Boston, Inc.,
Boston, MA, 1998.


\bibitem{tits1}
J. Tits, %
\emph{Buildings of spherical type and finite $BN$-pairs},
Lecture Notes in Math.\ \textbf{386},
Springer-Verlag (1974).

\bibitem{tits2}
\bysame, %J. Tits,
\emph{Th\'eorie des groupes},
R\'esum\'e des Cours et Travaux, % au Coll\`ege de France,
Annuaire du Coll\`ege de France, $97^{\rm e}$ ann\'ee,
(1996--1997), 89--102.


\end{thebibliography}
\end{document}